\newtheorem{theorem}{Theorem}
\newtheorem{lemma}[theorem]{Lemma}
\newtheorem{proposition}[theorem]{Proposition}
\newtheorem{corollary}[theorem]{Corollary}
\theoremstyle{definition}
\newtheorem{definition}[theorem]{Definition}
\newtheorem{remark}[theorem]{Remark}
\def\defn#1{Definition~\ref{def:#1}}
\def\thm#1{Theorem~\ref{thm:#1}}
\def\lem#1{Lemma~\ref{lem:#1}}
\def\prop#1{Proposition~\ref{prop:#1}}
\def\cor#1{Corollary~\ref{cor:#1}}
\def\rmark#1{Remark~\ref{rmark:#1}}
\newcommand{\R}{\mathbb{R}}
\newcommand{\Z}{\mathbb{Z}}
\newcommand{\bdry}{\partial}
\newcommand{\bdy}{\partial}
\newcommand{\transverse}{\pitchfork}
\newcommand{\Span}{\operatorname{Span}}
\newcommand{\cross}{\times}
\def\co{\colon\!}
\newcommand{\pij}{\pi_{ij}}
\newcommand\norm[1]{\left| #1 \right|}
\newcommand{\p}{\protect\overrightarrow{\mathbf{p}} }
\newcommand{\q}{\protect\overrightarrow{\mathbf{q}} }
\newcommand{\h}{\protect\overrightarrow{\mathbf{h}} }
\newcommand{\cv}{\protect\overrightarrow{\mathbf{v}} }
\newcommand{\x}{\protect\overrightarrow{\mathbf{x}} }
\newcommand{\bfh}{\mathbf{h}}
\newcommand{\bfm}{\mathbf{m}}
\newcommand{\bfp}{\mathbf{p}}
\newcommand{\bfq}{\mathbf{q}}
\newcommand{\bfr}{\mathbf{r}}
\newcommand{\bfv}{\mathbf{v}}
\newcommand{\bfw}{\mathbf{w}}
\newcommand{\bfx}{\mathbf{x}}
\newcommand{\bfy}{\mathbf{y}}
\newcommand{\bfzero}{\mathbf{0}}
\newcommand{\cnm}{C_n[M]}
\newcommand{\cnmo}{C_n(M)}
\newcommand{\cnmno}{C_n(M\times N)}
\newcommand{\cnn}{C_n[N]}
\newcommand{\cnno}{C_n(N)}
\newcommand{\cnr}{C_n[\R^k]}
\newcommand{\cnro}{C_n(\R^k)}
\newcommand{\cfr}{C_4[\R^k]}
\newcommand{\cfro}{C_4(\R^k)}
\newcommand{\qcfr}{\hat{C}_4[\R^k]}
\newcommand{\qcfro}{\hat{C}_4(\R^k)}
\newcommand{\cfs}{C_4[S^1]}
\newcommand{\cns}{C_n[S^1]}
\newcommand{\cnso}{C_n(S^1)}
\newcommand{\cnsl}{C_n[S^l]}
\newcommand{\cnslo}{C_n(S^l)}
\newcommand{\cfg}{C_4[\gamma]}
\newcommand{\cfog}{C^0_4[\gamma]}
\newcommand{\qcfog}{\hat{C}^0_4[\gamma]}
\newcommand{\cng}{C_n[\gamma]}
\newcommand{\cnf}{C_n[f]}
\newcommand{\slq}{Slq}
\newcommand{\qslq}{\widehat{S}lq}
\newcommand{\paren}{parenthesization }
\newcommand{\parens}{parenthesizations }
\newcommand{\FTCWC}{\operatorname{FTCWC}}
\def\mapright#1#2{
\mathop{\longrightarrow}\limits^{\scriptstyle#1}_{\scriptstyle#2}}
\def\vic#1{\mathbf{#1}}
\begin{document}
\title[]{Configuration Spaces, Multijet Transversality, and the Square-Peg Problem}
\author{Jason Cantarella}
\address{Mathematics Department, University of Georgia, Athens GA 30602}
\email{jason@math.uga.edu} 
\author{Elizabeth Denne}
\address{Mathematics Department, Washington \& Lee University, Lexington VA 24450}
\email{dennee@wlu.edu}
\author{John McCleary}
\address{Mathematics \& Statistics Department, Vassar College, Poughkeepsie NY 12604}
\email{mccleary@vassar.edu}

\makeatletter								
\@namedef{subjclassname@2020}{%
  \textup{2020} Mathematics Subject Classification}
\makeatother

\subjclass[2020]{Primary 53A04, Secondary 55R80, 57Q65, 58A20, 51M04}
\keywords{Configuration spaces, multijet transversality, square-peg problem, squares, square-like quadrilaterals, Jordan curves, embedded space curves.}

\begin{abstract} 
We prove a transversality ``lifting property'' for compactified configuration spaces as an application of the multijet transversality theorem: given a submanifold of configurations of points on an embedding of a compact manifold $M$ in Euclidean space, we can find a dense set of smooth embeddings of $M$ for which the corresponding configuration space of points is transverse to any submanifold of the configuration space of points in Euclidean space, as long as the two submanifolds of compactified configuration space are boundary-disjoint. 
We use this setup to provide an attractive proof of the square-peg problem: there is a dense family of smoothly embedded circles in the plane where each simple closed curve has an odd number of inscribed squares, and there is a dense family of smoothly embedded circles in $\R^n$ where each simple closed curve has an odd number of inscribed square-like quadrilaterals.
\end{abstract}

\date{\today}
\maketitle


\section{Introduction}\label{sect:intro} 

Given a simple closed curve (a Jordan curve) $\gamma$ in $\R^2$, can we find four points on $\gamma$ that form a square? This question was posed by O.~Toeplitz in 1911 \cite{Toeplitz},
and it has drawn the attention of many mathematicians since that time. Thinking of the Jordan curve as a ``round hole'', the problem has been affectionately dubbed the ``square-peg'' problem. We say that the square is \emph{inscribed} in $\gamma$ when the vertices lie on the curve. (We do not require that the square lie entirely in the interior of the curve.) In the form posed by Toeplitz, the problem remains open.  
The earliest contribution to the problem is due to A.~Emch \cite{MR1506193, MR1506239,MR1506274} who showed that there are squares on convex curves. Progress on the square-peg problem has chiefly been extension of the regularity class of simple closed curves for which the square can be found. The interested reader can find numerous articles \cite{FTCWC, MR1133201, Pak-Discrete-Poly-Geom,  MR3184501, MR3731730} summarizing the problem, and describing the classes of curves for which the Toeplitz conjecture has been proved.

The square-peg problem can be framed in terms of configuration spaces of points. First, consider the compactified configuration space $C_4[\R^2]$ of 4-tuples of points in the plane as a manifold-with-boundary (and corners). Then, the existence of inscribed squares can be rephrased more simply as a question about the intersections of two submanifolds of $C_4[\R^2]$. The first is the submanifold of 4-tuples of points on an embedding $\gamma\co S^1\hookrightarrow \R^2$ of a circle in the plane, denoted by $C_4[\gamma(S^1)]$; the second is the submanifold of squares in the plane, denoted by $\slq$. 

Thus, the square-peg problem is an example of the general problem of finding special geometric configurations on families of manifolds.  Theorems of a similar nature include S.~Kakutani's theorem \cite{MR7267} that a compact convex body in $\R^3$ has a circumscribed cube,  that is, a cube each of whose faces touch the convex body; the work of A.~Akoypan and R.~Karasev \cite{MR3016979} answers whether a convex  polytope admits an inscribed regular octahedron. In addition, there is the work of P.V.M.~Blagojevi\'c and G.~Ziegler \cite{MR2515779} on inscribed tetrahedra in spheres; G.~Kuperberg \cite{MR1703205} and V.V.~ Makeev \cite{MR2307358} on inscribed and circumscribed polyhedra in convex bodies and spheres. Compactified configuration spaces have also been used by S.T.~Vre\'{c}ica and R.T.~\v{Z}ivaljevi\'{c}, T. Rade  \cite{MR2823976} in their paper looking at the polygonal peg problem (inscribed affine regular hexagons in smooth Jordan curves, and inscribed parallelograms in smooth simple closed curves in $\R^3$).  There have also been many  papers  \cite{MR3810027, MR4061975,  greene2020cyclic, MR4298749, hugelmeyer2018smooth,  MR4298748, MR2038265, matschke2020quadrilaterals, schwartz2018rectangle, MR4142923} examining quadrilaterals inscribed in curves and,  more recently, making progress towards solving the rectangular-peg problem (finding rectangles of any aspect ratio inscribed in Jordan curves).


Here is an outline of our approach. We suppose that we seek a special configuration of $n$ points to be found on a compact manifold $M$ that is smoothly embedded in $\R^k$. We let the smooth embedding be denoted by $\gamma\co M\hookrightarrow \R^k$. Then, in the compactified configuration space $C_n[\R^k]$ of $n$ points in $\R^k$, there are two subspaces of interest:
$C_n[\gamma(M)]$, the compactified configuration space of $n$ points on $\gamma(M)$; and $Z$ the subspace of tuples of $n$ points in $\R^k$ that satisfy the conditions of a special configuration. For example, in the square-peg problem, $\gamma(S^1)$ is a smooth simple closed curve in $\R^2$, and the special configurations $Z$ are squares in $\R^2$.

Why do we use compactified configuration spaces? The open manifold $C_n(\R^k)$ of $n$-tuples of distinct points in $\R^k$ contains $C_n(\gamma(M))$ and the interior of $Z$, and we can ask if these submanifolds intersect. However, intersection theory in open manifolds is difficult. The compactification of configuration spaces of W.~Fulton and R.~MacPherson \cite{MR1259368} as developed by D.P.~Sinha \cite{newkey119}, provides the tools to make intersection theory reasonable. We give an overview of this theory in Section~\ref{sect:config}.

In Section~\ref{sect:transversality}, we look at intersections of submanifolds of configuration spaces and transversality. Suppose there is a different, well known, smooth embedding of $M$ in $\R^k$ (via $i\co M\hookrightarrow \R^k$), and assume that the configuration space $C_n[i(M)]$ is transverse to $Z$ in $C_n[\R^k]$. Also assume that $i(M)$ is smoothly homotopy equivalent to $\gamma(M)$ in $\R^k$. Standard transversality arguments should allow us to vary $C_n[i(M)]$ to $C_n[\gamma(M)]$ while maintaining the transversality of the intersection with $Z$. This idea is illustrated in Figure~\ref{fig:main-idea}.
The difficulties of this argument include: 
\begin{compactenum}
\item It is possible that special configurations on $\gamma(M)$ shrink away to the boundary
of $C_n[\R^k]$ during the isotopy. We overcome this problem by assuming that the boundaries  $\partial Z$ and $\partial C_n[\gamma(M)]$ are disjoint in $\partial C_n[\R^k]$.
\item In order to apply transversality arguments, we need to be able to perturb $C_n[\gamma(M)]$ so the intersection with $Z$ is transverse. However, when we do so, there is no guarantee that the varied submanifold consists of configurations on a perturbed smooth embedding of $M$ in $\R^k$. We deal with this issue by applying the multijet transversality theorem (\thm{multijet}).
\end{compactenum}

\begin{figure}
\begin{overpic}[scale=0.75]{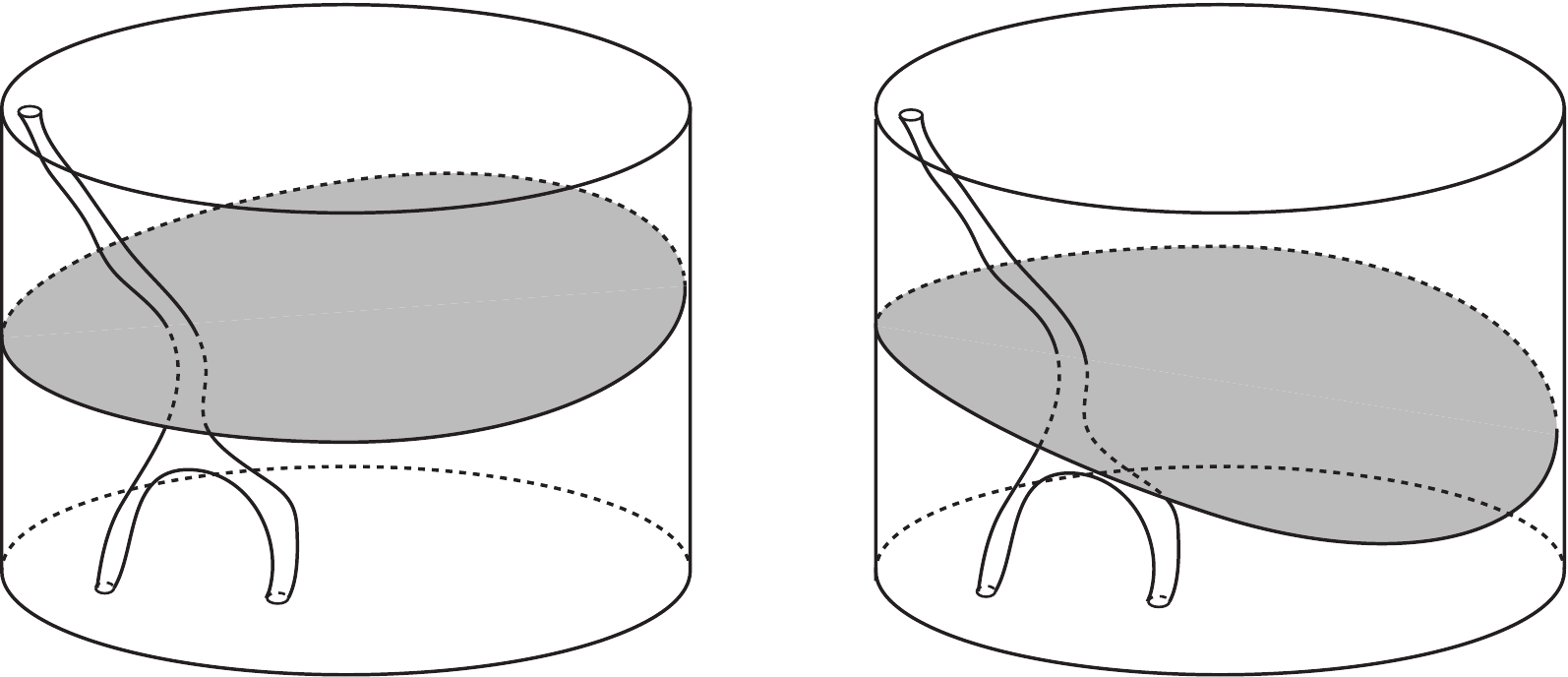}
\put(18,39){$C_n[\R^k]$}
\put(1.5,28.5){$Z$}
\put(26,22){$C_n[\gamma(M)]$}
\put(73,39){$C_n[\R^k]$}
\put(57.5,28.5){$Z$}
\put(82,16){$C_n[i(M)]$}
\end{overpic}
\caption{Here, both $C_n[\gamma(M)]$ and $C_n[i(M)]$ are boundary disjoint with $Z$ in $\bdry C_n[\R^k]$. We can move from $C_n[\gamma(M)]$ to $C_n[i(M)]$ while preserving the transversality of the intersection with $Z$.}
\label{fig:main-idea}
\end{figure}

Given these assumptions, our \thm{transversality-compact-config}  (roughly) says there there is an open dense set of (perturbed) smooth embeddings $\gamma'\co M\hookrightarrow \R^k$, with $C_n[\gamma'(M)]$ transverse to $Z$, and for which $\bdry Z$ and $\bdry C_n[\gamma'(M)]$ are disjoint in $\bdry C_n[\R^k]$.  

In Section~\ref{sect:Haefliger}, we then restrict our attention to smooth embeddings of spheres $S^l$ in $\R^k$, and use Haefliger's Theorem \cite{Haefliger:1961wr} to deduce the existence of a differentiable isotopy between our embeddings $\gamma(S^l)$ and $i(S^l)$. We combine all of these steps together in Section~\ref{sect:main-method} to get our main tool, \thm{main-theorem}, which can be applied to many settings, including the square-peg problem. In \thm{main-theorem}, we assume that 
\begin{compactitem}
\item $\gamma\co S^l\rightarrow \R^k$ is a smooth embedding of $S^l$ in $\R^k$, with a corresponding embedding of compactified configuration spaces $C_n[\gamma]\co C_n[S^l]\rightarrow C_n[\R^k]$;
\item $Z$ is a closed topological space contained in $\cnr$ such that $Z \cap \cnro$ is a submanifold of $\cnro$, and $\bdry Z\subset \bdry\cnr$;
\item  $C_n[\gamma(S^l)]$ and $Z$ are boundary-disjoint;
\item there is a standard smooth embedding  $i\co S^l\hookrightarrow \R^k$, such that $C_n[i(S^l)]$ is transverse to $Z$  in $\cnr$.
\end{compactitem}
We then deduce that  there is, for all $m$, a $C^m$-dense set of smooth embeddings $\gamma'\co S^l\hookrightarrow \R^k$, such that the corresponding embeddings on configuration spaces are $C^0$-close to $C_n[\gamma]$, and that $C_n[\gamma'(S^l)]$ is transverse to $Z$, and moreover $C_n[i(S^l)] \cap Z$ and $C_n[\gamma'(S^l)] \cap Z$ represent the same homology class in $Z$.

We next apply our method, \thm{main-theorem}, to the square-peg problem. In Section~\ref{sect:cfg}, we start by reviewing the structure of $C_4[\gamma(S^1)]$, the compactified configuration space of 4 points on a smooth embedding of a circle in $\R^k$.  We choose $\R^k$, rather than $\R^2$, as we will prove a more general version of Toeplitz's conjecture. Our main result is about the existence of a square-like quadrilateral inscribed in a smooth embedding of a circle in $\R^k$. In Section~\ref{sect:slq}, we define the space of {\em square-like quadrilaterals} in $\R^k$ denoted $\slq$. These are quadrilaterals $abcd$ with equal sides ($|ab|=|bc|=|cd|=|da|$) and equal diagonals ($|ac|=|bd|$). 
We then prove that 
\begin{compactenum}
\item $\slq$ is a submanifold of $C_4(\R^k)$ and $\bdry\slq\subseteq\bdry C_4[\R^k]$, 
\item the boundaries of $\slq$ and $C_4[\gamma(S^1)]$ are disjoint in $\bdry C_4[\R^k]$.
\end{compactenum}  

We next consider the number of intersections of $\slq$ with our standard smooth embedding $i\co S^1\hookrightarrow \R^k$ of a circle in $\R^k$. We choose this embedding to be the planar ellipse $\nicefrac{x^2}{a^2} + \nicefrac{y^2}{b^2} = 1$ with $a > b$. However, when we count the number of intersections of $\slq$ with $C_4[i(S^1)]$, we get an even number of points. Thus, we need to consider intersections counted up to cyclic relabeling. In Section~\ref{sect:cyclic} we carefully construct the quotient spaces needed to make this argument. In Section~\ref{sect:basecase} we show that our ellipses have a  a single inscribed square up to cyclic relabeling, and moreover our quotient spaces intersect transversally at this intersection. Finally, our arguments culminate in \thm{squarepeg}, where we conclude that there is, for all $m$, a $C^m$-dense\footnote{The density is with respect to the Whitney $C^\infty$-topology, described in detail in Section~\ref{sect:transversality}.} set of smooth embeddings of a circle in $\R^k$ each of which has an odd number of square-like quadrilaterals.

It is important to realize that while our results provide a unified and attractive view of this family of theorems about special inscribed configurations, they do not directly address the remaining open territory in Toeplitz's question. In \cite{FTCWC}, we give an extension of our results to prove that there exists at least one square-like quadrilateral inscribed in any embedding of $S^1$ in $\R^k$ which is of finite total curvature without cusps.  (We note that when $k=2$, this class of curves is less general than the family of curves for which W.~Stromquist \cite{MR1045781} and B.~Matschke~\cite{2010arXiv1001.0186M,MR3184501}  proved the square-peg theorem.)

Finally we note that in \cite{Simplices}, we provide another example of our main technique (\thm{main-theorem}), where we show that there is a $k(k-1)/2$ dimensional family of inscribed $(k+1)$-simplices of any constructible edgelength ratio in certain generic smooth embeddings of $S^{k-1}$ in $\R^k$.

      
\section{Configuration Spaces}
\label{sect:config}

The compactified configuration space of $n$ points in $\R^k$ is the natural setting for the square-peg and other inscribed polygon problems. In this section we give a brief overview of the  theory of compactified configuration spaces. There are many versions of this classical material (see for instance~\cite{MR1259368,MR1258919}). We follow Sinha~\cite{newkey119}, as this gives a geometric viewpoint appropriate to our setting. A reader familiar with configuration spaces may skip much of this section. However we recommend paying attention to the notation we have used for the spaces, points in the spaces and the strata.  \defn{pijsijk}, \defn{config}, and \rmark{notation} are particularly useful. 

\begin{definition}[\cite{newkey119}]\label{def:openconfig}
Given an $m$-dimensional smooth manifold $M$, let  $M^{\times n}$ denote the $n$ folk product of copies of $M$, and define $\cnmo$ to be the subspace of points  ${\mathbf{p}}=(p_1,\dots, p_n)\in M^{\times n}$ such that $p_j\neq p_k$ if $j\neq k$. Let $\iota$ denote the inclusion map of $\cnmo$ in~$M^{\times n}$. 
\end{definition}

The space $\cnmo$ is an open submanifold of $M^{\times n}$. Our goal is to compactify $\cnmo$ to a closed manifold-with-boundary and corners, which we will denote $\cnm$, without changing its homotopy type. The resulting manifold will be homeomorphic to $M^{\times n}$ with an open neighborhood of the fat diagonal
removed. Recall that the fat diagonal is the subset of $M^{\times n}$ of $n$-tuples for which (at least)
two entries are equal, that is, where
some collection of points comes together at a single point. The construction of $\cnm$ preserves information about the directions and relative rates of approach of each group of collapsing points.

\begin{definition}[\cite{newkey119},\cite{newkey118}]  \label{def:pijsijk}
Given an ordered pair of distinct elements from $\{1,\dots,n\}$, let the map $\pij\co C_n(\R^k)\rightarrow S^{k-1}$ send~$\bfp=(\bfp_1,\dots\bfp_n)$ to $\displaystyle\frac{\bfp_i-\bfp_j}{\norm{\bfp_i-\bfp_j}}$, the unit vector in the direction of $\bfp_i-\bfp_j$.  Let $[0,\infty]$ be the one-point compactification of $[0,\infty)$.  Given an ordered triple $(i,j,l)$ of distinct elements in $\{1,\dots,n\}$, let $r_{ijl} \co C_n(\R^k)\rightarrow [0,\infty]$ be the map which sends $\bfp$ to $\displaystyle\frac{\norm{{\bfp_i}-{\bfp_j}} }{\norm{ \bfp_i-{\bfp_l}} }$, the ratio of distances between $\bfp_i$ and $\bfp_j$, and $\bfp_i$ and $\bfp_l$. Define $s_{ijl}\co C_n(\R^k)\rightarrow [0,1]$ as the composition $(\frac{2}{\pi}\arctan)\circ ( r_{ijl})$.
\end{definition}

 We then compactify $C_n(\R^k)$ as follows:

\begin{definition} [\cite{newkey119}] \label{def:config} 
\begin{compactenum}\item Let $A_n[\R^k]$ be the product $(\R^k)^{n}\times (S^{k-1})^{n(n-1)} \times [0,1]^{n(n-1)(n-2)}$.  Define $C_n[\R^k]$ to be the closure of the image of $C_n(\R^k)$ under the map
$$\alpha_n= \iota \times (\pij) \times (s_{ijl}) \co C_n(\R^k)\rightarrow A_n[\R^k].$$
\item  We assume that all manifolds $M$ are smoothly embedded in $\R^k$, which allows us to define the restrictions of the maps $\pij$ and $s_{ijl}$.  Then $\cnmo$ is smoothly embedded in $C_n(\R^k)$ and we define $\cnm$ to be the closure of $\cnmo$ in $M^{n}\times (S^{k-1})^{n(n-1)} \times [0,1]^{n(n-1)(n-2)}$. 
We denote the {\em boundary} of $\cnm$ by $\bdry\cnm=\cnm\setminus\cnmo$.
\end{compactenum}
\end{definition}

We now summarize some of the important features of this construction, including the fact that $\cnm$ does not depend on the choice of embedding of $M$ in $\R^k$.

\begin{theorem}[\cite{newkey119},~\cite{newkey118}] \label{thm:config}
\begin{compactenum}
\item[\rm{1.}] $\cnm$ is a manifold-with-boundary and corners with interior $\cnmo$ having the same homotopy type as~$\cnm$. The topological type of $\cnm$ is independent of the embedding of $M$ in $\R^k$, and $\cnm$ is compact when $M$ is.
\item[{\rm 2.}] The inclusion of $\cnmo$ in $M^{\times n}$ extends to a surjective map  from $\cnm$ to $M^{\times n}$ which is a homeomorphism over points in $\cnmo$. 
\end{compactenum}
\end{theorem}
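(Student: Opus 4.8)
The statement bundles together the foundational structural properties of the Fulton--MacPherson compactification in the form given by Sinha, so the plan is to reduce each assertion to the blow-up description of $\cnm$ and read it off from there.

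The first step is to identify the closure $\cnm$ of \defn{config} with the space obtained from $M^{\times n}$ by iterated spherical blow-up along the diagonals $\Delta_S=\{\,\bfp\in M^{\times n}: p_i=p_j \text{ for all } i,j\in S\,\}$, taken over all $S\subseteq\{1,\dots,n\}$ with $|S|\ge 2$ and performed in order of increasing dimension; equivalently, $\cnm$ is the closure in the ambient product of the graph of the collection of maps $(\pij)$ and $(s_{ijl})$ over $\cnmo$. This identification is Sinha's theorem, and I would quote it. Granting it, the manifold-with-corners claim of the first item is immediate: a spherical blow-up of a smooth manifold along a closed submanifold is a manifold-with-boundary, an iterated blow-up along this ``building set'' of diagonals yields a manifold-with-corners, and its boundary strata are indexed by the nested families of the $\Delta_S$ --- precisely the loci where prescribed groups of points have collided. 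The interior, where no exceptional divisor is met, is exactly the open configuration space $\cnmo$.

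For the homotopy assertion I would invoke the general fact that a compact manifold-with-corners deformation retracts onto its interior: a collar of $\bdry\cnm$ lets us flow $\cnm$ inward onto $\cnmo$, so the inclusion $\cnmo\hookrightarrow\cnm$ is a homotopy equivalence. For independence of the embedding, note that the blow-up model just described refers only to the smooth structure of $M$ and to the combinatorics of the diagonals $\Delta_S\subset M^{\times n}$; it never mentions the embedding $M\hookrightarrow\R^k$ or the maps $\pij,s_{ijl}$, so the topological type of $\cnm$ is intrinsic to $M$. (Alternatively, one checks directly that two embeddings induce identical local models along each collision stratum, since their differentials are linear isomorphisms on every tangent space and hence the normalized difference vectors and distance ratios they define agree to leading order near the stratum.) Compactness is then clear: when $M$ is compact the ambient product $M^{\times n}\times(S^{k-1})^{n(n-1)}\times[0,1]^{n(n-1)(n-2)}$ is compact, and $\cnm$ is by construction a closed subset of it.

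For the second item, projection of the ambient product onto its first factor restricts, by \defn{config}(2), to a continuous map $\pi\co\cnm\to M^{\times n}$ extending $\iota$. Over $\cnmo$ this $\pi$ is a homeomorphism: $\alpha_n$ is injective there with continuous inverse given exactly by $\pi$ (away from the fat diagonal the sphere and interval coordinates are smooth functions of the point coordinates), and no boundary point of $\cnm$ maps into $\cnmo$, since its $M^{\times n}$-coordinate is a limit of colliding configurations and hence lies on the fat diagonal. For surjectivity, given any $\bfp\in M^{\times n}$, possibly on the fat diagonal, choose a sequence $(\bfq^{(t)})_{t\ge1}$ in $\cnmo$ with $\bfq^{(t)}\to\bfp$; the images $\alpha_n(\bfq^{(t)})$ lie in the compact ambient product, so after passing to a subsequence they converge to a point of $\cnm$ whose first coordinate is $\bfp$, giving $\bfp\in\pi(\cnm)$. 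The one place where genuine work hides is the manifold-with-corners assertion --- verifying that the iterated blow-up, equivalently the closure of the graph, has the claimed local structure along the collision strata; everything else is formal or a routine collar argument, and in a self-contained account this is exactly the content of the Fulton--MacPherson/Sinha construction, which I would cite rather than reprove.
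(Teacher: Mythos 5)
The paper does not prove this theorem; it is quoted as a black-box result from Sinha~\cite{newkey119} and Budney et al.~\cite{newkey118}, so there is no in-paper proof to compare your attempt against. Your outline is a correct sketch of the argument that appears in those references: identify $\cnm$ with the iterated spherical blow-up of $M^{\times n}$ along the diagonals (equivalently, the closure of the graph of the maps $\pij$ and $s_{ijl}$), read the manifold-with-corners structure off the nesting of diagonals, collar-retract onto the interior for the homotopy equivalence, note that the blow-up model is intrinsic to $M$ for embedding-independence, and deduce compactness and the projection to $M^{\times n}$ from the closure description. Two small points worth tightening: the surjectivity argument tacitly uses that $\cnmo$ is dense in $M^{\times n}$ (fine for $\dim M\ge 1$, the only case of interest), and it is cleaner to argue that $\partial\cnm$ is disjoint from $\cnmo$ by observing that over a configuration of distinct points the fiber of the projection is a single point, since the $\pij$ and $s_{ijl}$ are then continuous functions of $\bfp$ and hence determined, so nothing new in the closure can project to $\cnmo$. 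You correctly identify the genuinely hard part --- verifying the local model along collision strata --- and defer it to the cited literature, which is exactly what the paper itself does.
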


\begin{remark}\label{rmark:notation}
When discussing points in $\cnr$ or $\cnm$, it is easy to become confused. We pause to clarify notation.
\begin{itemize}
\item A point in $\R^k$ is denoted by $\bfx=(x_1, \dots, x_k)$, where each $x_i\in\R$.
\item Points in $(\R^k)^n$ are also denoted by $\bfx$, where $\bfx = (\bfx_1, \dots, \bfx_n)$ and each $\bfx_i\in\R^k$. (It will be clear from context which is meant.)
\item A point in $\cnr$ or $\cnm$, is denoted $\x$. 
\item At times, we will need to distinguish between the various entries of $\x\in\cnr$ or $\cnm$. 
In general,  
$$\x=(\bfx,(\pij)(\bfx), (s_{ijl})(\bfx)) =  (\bfx, \alpha(\bfx)),$$ where $\bfx=(\bfx_1,\dots,\bfx_n)\in (\R^k)^n$, and $ \alpha(\bfx) = ((\pij)(\bfx), (s_{ijl})(\bfx))$ gives the corresponding set of values in $(S^{k-1})^{n(n-1)}$ and $[0,1]^{n(n-1)(n-2)}$.
 \end{itemize}
\end{remark}

The space $\cnm$ may be viewed as a polytope with a combinatorial structure based on the different ways groups of points in $M$ can come together. This structure defines a stratification of $\cnm$ into a collection of closed faces of various dimensions whose intersections are members of the collection. We will use a bit of the structure of this collection, referred to as a \emph{stratification} of $\cnm$.

\begin{definition}[\cite{newkey118}] A \emph{\paren} $\mathcal{P}$ of a set $T$ is an unordered collection $\{A_1,\dots, A_l\}$ of subsets of $T$ such that $\#A_s\geq 2$, and for $s\neq t$ either $A_s\cap A_t=\emptyset$, or $A_s\subset A_t$, or $A_t\subset A_s$. A \paren is denoted by a nested listing of the $A_s$ using parentheses. Let ${\bf Pa}(T)$ denote the set of \parens of $T$, and define an ordering on it by $\mathcal{P}\leq\mathcal{P}'$ if $\mathcal{P}\subseteq\mathcal{P}'$.
\end{definition}
For example, for $T=\{1,2,3,4\}$, $(12)(34)$ represents a \paren whose subsets are $\{1,2\}$ and $\{3,4\}$ while $((12)34)$ represents a \paren whose subsets are $\{1,2\}$ and $\{1,2,3,4\}$. 

We identify each parenthesization $\mathcal{P} = \{A_1, \dots, A_l\}$ of $\{1, \ldots, n\}$ with a closed subset $S_\mathcal{P}$ of $\bdy \cnm$ in our stratification of $\cnm$. The idea is that all the points in each $A_s$ collapse together, but if $A_s \subset A_t$, then the points in $A_s$ collapse ``faster'' than the points in $A_t$. Formally, this becomes the following condition: Let $\p = ((\bfp_1\dots,\bfp_n),(\pij)(\bfp), (s_{ijl})(\bfp))$ be a point in $A_n[M]$. Then $\p \in S_\mathcal{P}$ if 
\begin{itemize}
\item $\bfp_i = \bfp_j$ if and only if $i, j \in A_s$ for some $s$.
\item $s_{ijl} = 0$ (and hence $s_{ilj} = 1$) if and only if $i, j \in A_s$ and $l \notin A_s$ (see  Proposition 3.3, Definition 2.10 \cite{newkey119}).
\end{itemize}
Sinha~\cite{newkey119} proves that a stratum $S_\mathcal{P}$ described by nested subsets $\{A_1,\dots, A_l\}$ has codimension~$l$ in $\cnm$. In the previous example $(12)$ has codimension~1, while $((12)34)$ and $(12)(34)$ have codimension~2. 

Any pair $\bfp$, $\bfq$ of disjoint points in $\R^k$ has a direction $(\bfp-\bfq)/\norm{\bfp-\bfq}$ associated to it, while every triple of disjoint points $\bfp$, $\bfq$, $\bfr$ has a corresponding distance ratio $\norm{\bfp-\bfq}/\norm{\bfp-\bfr}$. One way to think of the coordinates of $\cnm$ is that they extend the definition of these directions and ratios to the boundary.

\begin{theorem} [\cite{newkey119}, \cite{newkey118}] \label{thm:configgeom} Given a manifold $M \subset \R^k$, then in any configuration of points $\p \in \cnm$ the following holds.
\begin{compactenum}
\item  Each pair of points $\bfp_i$, $\bfp_j$ has associated to it a well-defined unit vector in $\R^k$ giving the direction from $\bfp_i$ to $\bfp_j$. If the pair of points project to the same point $\bfp$ of $M$, this vector lies in $T_\bfp M$. 
\item Each triple of points $\bfp_i$, $\bfp_j$, $\bfp_k$ has associated to it a well-defined scalar in $[0,\infty]$ corresponding to the ratio of the distances $\norm{\bfp_i - \bfp_j}$ and $\norm{\bfp_i - \bfp_k}$. If any pair of $\{\bfp_i, \bfp_j,\bfp_k\}$ projects to the same point in $M$ (or all three do), this ratio is a limiting ratio of distances.
\item The functions $\pi_{ij}$ and $s_{ijl}$ are continuous on all of $\cnm$ and smooth on each face of $\bdy\cnm$. 
\end{compactenum}
\end{theorem}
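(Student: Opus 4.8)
The plan is to read everything off the construction of $\cnm$ as the closure of $\cnmo$ inside $A_n[M]=M^{n}\times(S^{k-1})^{n(n-1)}\times[0,1]^{n(n-1)(n-2)}$ (\defn{config}), together with \thm{config}. A point $\p\in\cnm$ is in particular a point of $A_n[M]$, and hence carries a coordinate in $S^{k-1}$ for each ordered pair $(i,j)$ and a coordinate in $[0,1]$ for each ordered triple $(i,j,l)$; these are $\pi_{ij}(\p)\in S^{k-1}$ and $s_{ijl}(\p)\in[0,1]$, and one sets $r_{ijl}(\p):=\tan(\tfrac{\pi}{2}\,s_{ijl}(\p))\in[0,\infty]$ using the homeomorphism $\tfrac{2}{\pi}\arctan\co[0,\infty]\to[0,1]$. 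Since these maps are the restrictions to $\cnm$ of the smooth coordinate projections of $A_n[M]$, they are continuous on all of $\cnm$; and on the dense open subset $\cnmo$ they coincide with the maps of \defn{pijsijk}, because $\cnmo$ sits inside $\cnm$ via $\alpha_n$. This gives the first sentences of (1) and (2), together with the continuity half of (3). For the smoothness half of (3): by \thm{config} together with Sinha's construction, $\cnm$ is a manifold-with-corners sitting inside $A_n[M]$ with each face of $\bdry\cnm$ a smooth submanifold of $A_n[M]$, so restricting the (smooth) coordinate projections of $A_n[M]$ to such a face shows $\pi_{ij}$ and $s_{ijl}$ are smooth there.

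For the tangency statement in (1), suppose the $i$-th and $j$-th base points of $\p$, under the surjection $\cnm\to M^{\times n}$ of \thm{config}, coincide at a point $\bfp\in M$. Since $\cnmo$ is dense in $\cnm$, choose $\p^{(\nu)}\in\cnmo$ with $\p^{(\nu)}\to\p$; then its base points $\bfp_i^{(\nu)},\bfp_j^{(\nu)}\in M$ are distinct and both converge to $\bfp$, while $\pi_{ij}(\p^{(\nu)})=(\bfp_i^{(\nu)}-\bfp_j^{(\nu)})/\norm{\bfp_i^{(\nu)}-\bfp_j^{(\nu)}}\to\pi_{ij}(\p)$ by continuity. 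Here I would invoke the elementary ``secant-to-tangent'' fact: picking a local parametrization $\phi$ of $M$ near $\bfp$ with $\phi(0)=\bfp$ and writing $\bfp_i^{(\nu)}=\phi(a^{(\nu)})$, $\bfp_j^{(\nu)}=\phi(b^{(\nu)})$ with $a^{(\nu)},b^{(\nu)}\to 0$, a first-order Taylor expansion gives $\bfp_i^{(\nu)}-\bfp_j^{(\nu)}=D\phi(0)(a^{(\nu)}-b^{(\nu)})+o(\norm{a^{(\nu)}-b^{(\nu)}})$; passing to a subsequence along which $(a^{(\nu)}-b^{(\nu)})/\norm{a^{(\nu)}-b^{(\nu)}}$ converges to a unit vector $w$, one gets $\pi_{ij}(\p)=D\phi(0)w/\norm{D\phi(0)w}\in T_\bfp M$. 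Claim (2) goes the same way: with the same sequence, continuity of $s_{ijl}$ (and of the homeomorphism $[0,1]\cong[0,\infty]$) gives $r_{ijl}(\p)=\lim_{\nu}\norm{\bfp_i^{(\nu)}-\bfp_j^{(\nu)}}/\norm{\bfp_i^{(\nu)}-\bfp_l^{(\nu)}}$, a limit of honest distance ratios; if the three base points of $\p$ are distinct this limit equals $\norm{\bfp_i-\bfp_j}/\norm{\bfp_i-\bfp_l}\in(0,\infty)$ by continuity of the base-point coordinates, and otherwise it is a genuine limiting ratio, possibly $0$ or $\infty$. In every case $r_{ijl}(\p)$ depends only on $\p$, being one of its coordinates.

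The only genuinely non-bookkeeping ingredient, and the step I expect to be the main obstacle, is the smoothness of $\pi_{ij}$ and $s_{ijl}$ along the faces of $\bdry\cnm$ in (3). This rests on the part of the Fulton--MacPherson/Sinha structure theorem summarized by \thm{config}: near a stratum $S_\mathcal{P}$ the space $\cnm$ is a smooth submanifold-with-corners of $A_n[M]$ on which the pairwise directions and triple distance ratios persist as smooth coordinate functions --- concretely, a local model for $S_\mathcal{P}$ is a bundle whose fibers are products of lower configuration spaces built in tangent spaces of $M$, on which $\pi_{ij}$ and $s_{ijl}$ are manifestly smooth. Everything else reduces to continuity of coordinate projections together with the secant-to-tangent limit, both routine.
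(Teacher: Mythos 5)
This theorem is \emph{cited} in the paper, not proved: it is Theorem~\ref{thm:configgeom}, attributed to Sinha's work \cite{newkey119,newkey118}, and the authors offer no proof of their own. So there is no in-paper argument to compare your sketch against; I can only assess it on its own terms.

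On those terms, your outline is sound and allocates effort appropriately. The observation that $\pi_{ij}$ and $s_{ijl}$ are, by \defn{config}, restrictions to the closure $\cnm\subset A_n[M]$ of the ambient coordinate projections immediately gives the existence statements in (1)--(2) and the continuity half of (3). Your secant-to-tangent argument for the tangency claim in (1) is correct; note that the step $\bfp_i^{(\nu)}-\bfp_j^{(\nu)}=D\phi(0)(a^{(\nu)}-b^{(\nu)})+o(\norm{a^{(\nu)}-b^{(\nu)}})$ is \emph{not} just the two separate first-order Taylor remainders at $0$ (those give only $o(\norm{a^{(\nu)}})+o(\norm{b^{(\nu)}})$, which need not be small compared to $\norm{a^{(\nu)}-b^{(\nu)}}$); one should instead write $\phi(a)-\phi(b)=\bigl(\int_0^1 D\phi(b+t(a-b))\,dt\bigr)(a-b)$ and use continuity of $D\phi$ to get the $o(\norm{a-b})$ error. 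You also correctly identify the nontrivial ingredient: the smoothness of $\pi_{ij}$ and $s_{ijl}$ on boundary faces is exactly what requires Sinha's structure theorem describing the strata of $\cnm$ as smooth submanifolds of $A_n[M]$ built from local models in tangent spaces of $M$. Since the theorem being proved is itself a summary of that structure theory, it is appropriate (and essentially unavoidable) to treat this as a black box rather than reprove it; that is also what the paper does by citing it.
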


We notice that the definition of the $S_{\mathcal{P}}$ does not depend on the $\pi_{ij}$. In fact, for connected manifolds of dimension at least $2$, the combinatorial structure of the strata of $\cnm$ depends only on the number of points. Regardless of dimension, this construction and division of $\bdy \cnm$ into strata is functorial in the following sense.

\begin{theorem}[\cite{newkey119}]\label{thm:functor}
Suppose $M$ and $N$ are embedded submanifolds of $\R^k$ and $f\co  M\hookrightarrow N$ is an embedding. This induces an embedding of manifolds-with-corners called the evaluation map $C_n[f] \co C_n[M]\hookrightarrow C_n[N]$ that respects the stratifications.  This map is defined by choosing the ambient embedding of $M$ in $\R^k$ to be the composition of $f$ with
the ambient embedding of $N$.
\end{theorem}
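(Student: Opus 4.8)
The plan is to present both configuration spaces inside the same kind of ambient product and let $f$ act in the first factor only. The essential leverage comes from \thm{config}: the topological type of $\cnm$ is independent of the chosen embedding of $M$ into $\R^k$, so we are free to take that ambient embedding to be $f$ followed by the given ambient embedding $N\hookrightarrow\R^k$. With this choice the coordinate functions of \defn{pijsijk} become strictly compatible under $f$. Indeed, on the open configuration spaces the map $C_n(f)\co\cnmo\to\cnno$, $(p_1,\dots,p_n)\mapsto(f(p_1),\dots,f(p_n))$, is well defined and injective because $f$ is injective, is smooth, and has injective differential because $df$ does; and since $\pi_{ij}$ and $s_{ijl}$ depend only on the underlying points of $\R^k$, we get $\pi_{ij}^M=\pi_{ij}^N\circ C_n(f)$ and $s_{ijl}^M=s_{ijl}^N\circ C_n(f)$. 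Writing $A_n[M]=M^{\times n}\times(S^{k-1})^{n(n-1)}\times[0,1]^{n(n-1)(n-2)}$, and likewise $A_n[N]$, and setting $\Phi=f\times\mathrm{id}\times\mathrm{id}\co A_n[M]\to A_n[N]$, this identity says exactly that $\alpha_n^N\circ C_n(f)=\Phi\circ\alpha_n^M$.

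Passing to closures is then routine. Since $\Phi$ is continuous it carries the closure of $\alpha_n^M(\cnmo)$ in $A_n[M]$ into the closure in $A_n[N]$ of $\Phi(\alpha_n^M(\cnmo))=\alpha_n^N(C_n(f)(\cnmo))\subseteq\alpha_n^N(\cnno)$; that is, $\Phi(\cnm)\subseteq\cnn$, and I define $C_n[f]:=\Phi|_{\cnm}$. Because $f$ is a topological embedding and the remaining two factors of $\Phi$ are identities, $\Phi$ is a topological embedding of $A_n[M]$ into $A_n[N]$, hence so is its restriction $C_n[f]$; this gives injectivity and the homeomorphism-onto-image property with no compactness hypothesis on $M$. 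That $C_n[f]$ respects the stratifications follows from the combinatorial description recalled before \thm{configgeom}: a point $\p$ lies in the stratum $S_{\mathcal P}$ of a \paren $\mathcal P=\{A_1,\dots,A_l\}$ exactly when $\bfp_i=\bfp_j$ iff $i,j\in A_s$ for some $s$, and $s_{ijl}=0$ iff $i,j\in A_s$ and $l\notin A_s$. Injectivity of $f$ makes $\bfp_i=\bfp_j$ equivalent to $f(\bfp_i)=f(\bfp_j)$, and $\Phi$ carries all $s_{ijl}$ coordinates over unchanged, so $C_n[f]^{-1}(S_{\mathcal P}^N)=S_{\mathcal P}^M$ for every \paren $\mathcal P$ of $\{1,\dots,n\}$; in particular $C_n[f]$ maps the codimension-$l$ stratum $S_{\mathcal P}^M$ into $S_{\mathcal P}^N$ and $\bdry\cnm$ into $\bdry\cnn$.

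The one genuinely technical point — and the step I expect to be the main obstacle — is showing that $C_n[f]$ is a \emph{smooth} map of manifolds-with-corners and an immersion along $\bdry\cnm$; on the interior this is clear since there $C_n[f]=C_n(f)$, a smooth immersion. Here I would invoke Sinha's local model for $\cnm$ near a point of $S_{\mathcal P}$, in which a chart is built by iterated spherical blow-up of the diagonal strata of $M^{\times n}$, the extra coordinates being the directions $\pi_{ij}$ and the ratio/screen data extracted from the $s_{ijl}$. The preimage under $f^{\times n}\co M^{\times n}\hookrightarrow N^{\times n}$ of each diagonal stratum of $N^{\times n}$ is the corresponding stratum of $M^{\times n}$, and $f^{\times n}$ meets these loci cleanly, so it lifts through each successive blow-up; in the resulting charts $C_n[f]$ is a chart presentation of $f$ in the base directions together with the identity on the direction and ratio coordinates, which is manifestly smooth with injective differential. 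This exhibits $C_n[f]$ as a smooth embedding of manifolds-with-corners that respects the stratification. Alternatively, since $\alpha_n^N\circ C_n(f)=\Phi\circ\alpha_n^M$ already identifies $C_n[f]$ with the evaluation map constructed by Sinha, one may simply cite the functoriality in \cite{newkey119} for this last step.
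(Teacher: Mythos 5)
The paper does not prove \thm{functor}; it is attributed to Sinha \cite{newkey119}, so there is no in-text argument to compare against. Your reconstruction captures the essential content correctly: once the ambient embedding of $M$ is taken to be $\iota_N\circ f$, the coordinate functions of \defn{pijsijk} restricted to $C_n(M)$ and to $C_n(N)$ literally coincide, so that $\alpha_n^N\circ C_n(f)=\Phi\circ\alpha_n^M$ with $\Phi=f\times\mathrm{id}\times\mathrm{id}$; the closure inclusion, the topological-embedding property of $\Phi$ (a product of a topological embedding with identities), and the stratification-preservation via injectivity of $f$ together with invariance of the $s_{ijl}$ coordinates are all sound.

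The only soft spot is the one you flag yourself: establishing that $\Phi|_{\cnm}$ is a \emph{smooth} embedding of manifolds-with-corners (smooth, with injective differential along the boundary strata), and not merely a topological one. Your blow-up sketch is the right idea, but recall that Sinha defines $\cnm$ by closure in $A_n[M]$ and only afterward identifies it with the iterated spherical blow-up model; the clean-intersection condition that lets $f^{\times n}$ lift through the successive blow-ups needs to be matched to whichever local charts you use, and this is precisely the content that makes the statement nontrivial. Your fallback of citing the functoriality in \cite{newkey119} for this last step is circular as a proof of the theorem, since that functoriality \emph{is} the theorem, though it is consistent with the paper's own treatment, which simply cites \cite{newkey119} for the whole statement.
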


For an embedding $f \co M \hookrightarrow N$, the image of the induced embedding $C_n[f]\co C_n[M] \hookrightarrow C_n[N]$ will be denoted by $C_n[f(M)]$.

\begin{corollary}\label{cor:smooth}
Let $f\co\R^k\rightarrow \R^k$ be a smooth diffeomorphism. Then the induced map of configuration spaces $\cnf\co
\cnr\rightarrow\cnr$ is also a smooth diffeomorphism (on each face of $\cnr$).
\end{corollary}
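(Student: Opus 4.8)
\textbf{Proof proposal for \cor{smooth}.}

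The plan is to show that $C_n[f]$ is a smooth diffeomorphism on each face by producing an explicit smooth inverse, built functorially from $f^{-1}$. Since $f\co\R^k\to\R^k$ is a diffeomorphism, it is in particular an embedding of $\R^k$ into $\R^k$, so \thm{functor} already gives us an induced embedding-of-manifolds-with-corners $C_n[f]\co\cnr\to\cnr$ respecting the stratifications; symmetrically, $f^{-1}$ induces $C_n[f^{-1}]\co\cnr\to\cnr$. First I would check that $C_n[f^{-1}]\circ C_n[f]=\mathrm{id}$ and $C_n[f]\circ C_n[f^{-1}]=\mathrm{id}$. On the open stratum $\cnro$ this is immediate, since there $C_n[f]$ is just the restriction of $f^{\times n}$ to $n$-tuples of distinct points (composed with the coordinate functions $\pij$, $s_{ijl}$), and $f^{\times n}$ is literally invertible with inverse $(f^{-1})^{\times n}$. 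Because $\cnro$ is dense in $\cnr$ and both composites are continuous maps agreeing with the identity on a dense set, they equal the identity on all of $\cnr$. Hence $C_n[f]$ is a homeomorphism with continuous inverse $C_n[f^{-1}]$.

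It remains to see that $C_n[f]$ and its inverse are smooth on each face of $\cnr$. For this I would work face-by-face and use the coordinate description of $\cnr$ from \defn{config} and \thm{configgeom}. Fix a stratum $S_{\mathcal P}$. A point $\x\in S_{\mathcal P}$ records: the $n$-tuple $\bfx=(\bfx_1,\dots,\bfx_n)\in(\R^k)^n$ (with coincidences dictated by $\mathcal P$), the directions $\pij(\x)\in S^{k-1}$, and the ratios $s_{ijl}(\x)\in[0,1]$. The $\bfx$-coordinate transforms under $C_n[f]$ by $\bfx\mapsto f^{\times n}(\bfx)$, which is smooth. For the direction and ratio coordinates on a face, one uses that when $\bfx_i$ and $\bfx_j$ both project to a point $\bfp\in M$ the recorded direction is a limiting direction, i.e. it is obtained by differentiating: if the collapse is modeled by $\bfx_i(t),\bfx_j(t)\to\bfp$, then $\pij=\lim (\bfx_i(t)-\bfx_j(t))/\|\bfx_i(t)-\bfx_j(t)\|$, and $f$ acts on this limit through its derivative $df_\bfp$, sending the unit vector $v$ to $df_\bfp(v)/\|df_\bfp(v)\|$. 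Likewise the limiting ratios transform by the corresponding ratios of $\|df_\bfp(\cdot)\|$. Since $f$ is a diffeomorphism, $df_\bfp$ is a linear isomorphism depending smoothly on $\bfp$, so all of these are smooth functions of the face coordinates; and the same argument with $f^{-1}$ and $df^{-1}_{f(\bfp)}=(df_\bfp)^{-1}$ shows the inverse is smooth. Concatenating, $C_n[f]$ restricted to $S_{\mathcal P}$ is a smooth map with smooth inverse, i.e. a diffeomorphism of that face onto its image face.

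The main obstacle, and the step deserving the most care, is the smoothness of the direction- and ratio-coordinates of $C_n[f]$ along the boundary strata: away from the diagonal these are smooth for trivial reasons, but on a face they are defined by a limiting process, so one must verify that composition with $f$ genuinely descends to the smooth map $v\mapsto df_\bfp(v)/\|df_\bfp(v)\|$ on the unit sphere bundle and $\rho\mapsto \|df_\bfp(\cdot)\|$-weighted ratios, uniformly in the face coordinates. The cleanest way to do this is to invoke Sinha's local models for $\cnr$ near a stratum (or the blow-up description underlying \defn{config}), in which these coordinates appear as honest smooth coordinate functions, and observe that $f$, being smooth with smooth nonvanishing derivative, acts smoothly in those local models; functoriality from \thm{functor} then packages everything. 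I would also remark that $C_n[f]$ need not be smooth as a single map on all of $\cnr$ across the corners — smoothness is only claimed and only needed face-by-face — which is why the statement is phrased that way, and the proof respects that by never attempting to differentiate transversally to a face.
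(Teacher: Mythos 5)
Your argument is correct and follows the same route as the paper, which simply declares the result ``an immediate corollary'' of \thm{functor}: apply functoriality to $f$ and to $f^{-1}$ to get two induced embeddings of manifolds-with-corners, observe they are mutually inverse, and conclude. Your write-up just makes explicit the verification (agreement with $(f^{-1})^{\times n}\circ f^{\times n}$ on the dense interior, extension by continuity, and the smooth action of $df_\bfp$ on the direction and ratio coordinates on each boundary face) that the paper leaves to the reader.
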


\begin{proof} This is an immediate corollary of 
the previous theorem.
\end{proof}

\section{Configuration Spaces and Transversality}
\label{sect:transversality}

In this section, we prove a transversality ``lifting property'' for compactified configuration spaces: The submanifold of configurations of points on a smoothly embedded submanifold $M$ of $\R^k$ may be made transverse to any submanifold $Z$ of the configuration space of points in $\R^k$ by an arbitrarily small variation of $M$, as long as the two submanifolds of configuration space are boundary-disjoint. This is a useful technique and parts of it have been proved before. For instance, R.~Budney et al.~\cite{newkey118} prove a special case of this result. 
We will show that a general form of this result may be obtained easily from the Multijet Transversality Theorem~(\cite{Golubitsky:1974iu},  Theorem II.4.13).

We begin by recalling some details about the construction of jet space and the Whitney $C^\infty$-topology on mappings. Then we will state the multijet transversality theorem and show that our desired result on configuration space transversality follows. 

\begin{definition}
\label{def:jetspace}
Let $M$ and $N$ be smooth manifolds, and $f$ be a smooth function $f \co M \rightarrow N$. The {\em space of $0$-jets} $J^0(M,N) = M \cross N$. The {\em $0$-jet of $f$} is the function $j^0 f \co M \rightarrow J^0(M,N)$ given by $j^0 f(\bfp) = (\bfp,f(\bfp))$.
\end{definition}

It is a standard fact that jet space $J^0(M,N)$ is a smooth manifold. Further, $0$-jet spaces may be extended to $k$-jet spaces by an inductive procedure involving taking successive derivatives. We won't need higher jet spaces here; we refer the interested reader to \cite{Golubitsky:1974iu} for details. 

We can extend the definition of jet space to a space of $n$-fold \emph{multijets} as follows.

\begin{definition}
\label{def:multijets} Let the source map $\sigma \co J^0(M,N)^{\times n} \rightarrow M^{\times n}$ be given by  
$$ \sigma((\bfp_1,\bfq_1),\dots ,(\bfp_n,\bfq_n)) = (\bfp_1,\dots , \bfp_n).$$
 Define the space of  {\em $n$-fold $0$-multijets} $J_n^0(M,N)=\sigma^{-1}(C_n(M))$.
 Given a smooth function $f \co M \rightarrow N$, there is a natural smooth map $j^0_n f \co \cnmo \rightarrow J_n^0(M,N)$ given by
\begin{equation*}
j^0_n f(\bfp) = (j^0 f(\bfp_1), \dots, j^0 f(\bfp_n))= ((\bfp_1,f(\bfp_1)),\dots , (\bfp_n,f(\bfp_n)) ).
\end{equation*}
\end{definition}


The space $C^\infty(M,N)$ has the Whitney $C^\infty$-topology. Recall from \cite{Hirsch}, for $r$ finite, the $C^r$-topology on $C^\infty(M,N)$ has as subbasis sets of the form
$$\mathcal{N}^r(f; (U,\phi), (V,\psi), \delta).$$
This denotes the subset of functions $g\co M\to N$ that are smooth, and for coordinate charts $\phi\co (U' \subset M) \to (U\subset \R^m)$ and $\psi\co 
(V' \subset N) \to (V\subset \R^k)$ and $K \subset U$ compact with $g(\phi^{-1}(K)) \subset V'$, then, for all $s \leq r$, and all $\bfx \in K$,
$$\| D^s (\psi g \phi^{-1})(\bfx) - D^s(\psi f \phi^{-1})(\bfx)\| < \delta. $$
Here, $D^s F$ for a function $F \co (U\subset \R^m) \to (V \subset \R^k)$ is the $k$-tuple of the $s$th  homogeneous parts of the Taylor series representations of
the projections of $F$. The topology generated by this subbasis is the Whitney $C^r$-topology on $C^r(M,N)$. The subspace $C^\infty(M,N)$ has the Whitney
$C^\infty$-topology by taking the union of all subbases for all $r\geq 0$.

For $M$ compact, in this topology, it is a standard theorem that the subset $\mbox{\rm Emb}^\infty(M,N) \subset C^\infty(M,N)$ of smooth embeddings of $M$ into $N$ is an open set (see Theorem 2.1.4 of \cite{Hirsch}). Another important topological result holds for 0-multijets:

\begin{theorem}[{\bf 0-Multijet Transversality Theorem}, \cite{Golubitsky:1974iu} Theorem II.4.13]
Let $M$ and $N$ be smooth manifolds and let $Z$ be a submanifold of $J_n^0(M,N)$. Let 
\begin{equation*}
T_Z = \left\{ f \in C^\infty(M,N) \mid j_n^0 f \transverse Z \right\}.
\end{equation*}
Then $T_Z$ is $C^m$-dense in $C^\infty(M,N)$ for any $m$. Moreover, if $Z$ is compact, then $T_Z$ is $C^\infty$-open in $C^\infty(M,N)$. 
\label{thm:multijet}
\end{theorem}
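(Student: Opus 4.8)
\section*{Proof proposal}

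The plan is to follow the classical Thom transversality paradigm, specialized to $0$-jets, where the perturbations one needs are particularly simple. The global statement is reduced to a \emph{local} density statement by a Baire category argument (the Whitney $C^\infty$-topology on $C^\infty(M,N)$ is a Baire space), and the local statement is obtained from the parametric transversality theorem applied to a finite-dimensional family of perturbations of $f$. The conceptual point that makes the \emph{multi}jet version work — and the reason one restricts the source to $\cnmo$ rather than $M^{\times n}$ — is that on $\cnmo$ the $n$ source points are distinct, so perturbations localized near each of them can be chosen independently.

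First I would set up the globalization. Since $Z$ need not be closed in $J_n^0(M,N)$, it is cleaner to cover $Z$ than to cover the source: as $Z$ is a second countable manifold, choose a countable locally finite cover $Z = \bigcup_j W_j$ by open subsets $W_j$ whose closures $\overline{W_j}$ (taken in $Z$) are compact, so $\overline{W_j} \subset Z$. Set
\[
 T_j = \{\, f \in C^\infty(M,N) \mid j_n^0 f \transverse Z \text{ at every point of } (j_n^0 f)^{-1}(\overline{W_j}) \,\}.
\]
Because $Z = \bigcup_j W_j$ and each $\overline{W_j} \subset Z$, one checks directly that $T_Z = \bigcap_j T_j$, so by Baire it suffices to prove each $T_j$ is open and dense. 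For openness, note that if $j_n^0 g(\bfp) \in \overline{W_j}$ then $\bfp = \sigma(j_n^0 g(\bfp)) \in \sigma(\overline{W_j})$, so $(j_n^0 g)^{-1}(\overline{W_j}) \subset \sigma(\overline{W_j})$, a fixed compact subset of $\cnmo$, for \emph{every} $g$; since transversality of $j_n^0 g$ to $Z$ at a point is an open condition on $\bfp$ and the first derivatives of $g$ at the entries of $\bfp$, and this is controlled uniformly over the compact set $\sigma(\overline{W_j})$ in the Whitney topology, $T_j$ is open. (When $Z$ itself is compact the cover is finite — one may take $W_1 = Z$ — so $T_Z$ is a finite intersection of such open sets, which gives the ``$C^\infty$-open'' clause.) It remains to prove each $T_j$ is dense.

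For density, fix $j$. By compactness it is enough to perturb $f$, in an arbitrarily $C^m$-small way, so that $j_n^0 f \transverse Z$ over a neighborhood of a single point $\bfp = (\bfp_1, \dots, \bfp_n) \in \sigma(\overline{W_j}) \subset \cnmo$, and then repeat over a finite subcover of $\sigma(\overline{W_j})$. Since the $\bfp_i$ are distinct, choose pairwise disjoint coordinate balls $U_i \ni \bfp_i$ in $M$ and a chart around each $f(\bfp_i)$ in $N$, and bump functions $\phi_i \in C^\infty_c(U_i)$ with $\phi_i \equiv 1$ near $\bfp_i$. For $s = (s_1, \dots, s_n)$ in a small ball $S \subset (\R^k)^n$ define $f_s(x) = f(x) + \sum_i \phi_i(x)\, s_i$, interpreted in the chosen $N$-charts, so $f_0 = f$. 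On the box $U_1 \times \cdots \times U_n$, which lies in $\cnmo$ by disjointness, the evaluation map
\[
 F \co (U_1 \times \cdots \times U_n) \times S \longrightarrow J_n^0(M,N), \qquad F(\bfp_1, \dots, \bfp_n, s) = j_n^0 f_s(\bfp_1, \dots, \bfp_n),
\]
equals $\big((\bfp_1, f(\bfp_1) + s_1), \dots, (\bfp_n, f(\bfp_n) + s_n)\big)$ near $\bfp$; differentiating in $s$ alone already surjects onto the fibre $(\R^k)^n$ of $\sigma$, so $F$ is a submersion there, hence $F \transverse Z$. The parametric transversality theorem then gives a residual — in particular dense — set of $s \in S$ with $j_n^0 f_s \transverse Z$ over a neighborhood of $\bfp$; picking such an $s$ as small as we like shows $f$ is in the closure of $T_j$, and finiteness of the subcover completes the argument.

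I expect the main obstacle to be bookkeeping rather than a single hard step: the local submersion step is immediate for $0$-jets (no polynomial perturbations up to degree $k$ are needed, just translations), so the real work is organizing the globalization correctly — handling a possibly non-closed $Z$ and the non-compact source $\cnmo$ via the right countable cover, verifying $T_Z = \bigcap_j T_j$, and nailing down the openness of $T_j$ uniformly over $\sigma(\overline{W_j})$ in the Whitney topology. The essential geometric input is simply the disjointness of the source points, which is what decouples the $n$ local perturbations and makes $F$ a submersion onto the full fibre of $\sigma$.
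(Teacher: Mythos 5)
The paper does not prove this statement; it imports it directly from Golubitsky--Guillemin \cite{Golubitsky:1974iu}, so there is no in-paper proof to compare against. Your proposal is the standard proof of the multijet transversality theorem specialized to $0$-jets --- globalize by Baire category in the Whitney $C^\infty$-topology, perturb locally by bump functions (for $0$-jets, constant translations suffice; no polynomial jets are needed), apply parametric transversality, and exploit disjointness of the source points to decouple the $n$ local perturbations --- and this matches the strategy of the cited reference and is essentially correct. The only soft spot is the phrase ``repeat over a finite subcover'' in the density step for $T_j$: perturbing near one source tuple can a priori destroy transversality achieved near another, so the clean fix is to refine further and write $T_j$ as a finite intersection $\bigcap_i T_{j,i}$, where $T_{j,i}$ is the set of $f$ transverse over the preimage of a compact piece $K_i$ of $\overline{W_j}$, and establish that each $T_{j,i}$ is both open and dense before intersecting. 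You already have all the ingredients for this (the openness argument you give for $T_j$ applies verbatim to each $T_{j,i}$), so it is a matter of phrasing rather than a genuine gap.
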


Before proceeding any further, we recall the definition of transversality following \cite{Guillemin:2010ti}.  Assume that $f\co  X\rightarrow Y$  is a map between manifolds, and $Z$ is a submanifold of $Y$.  Let $x\in f^{-1}(Z)$ and $y=f(x)$. Then $f$ is said to be {\em transversal} to $Z$, denoted $f\pitchfork Z$, provided that $\text{Image}(Df_x) + T_y(Z) = T_y(Y)$ holds for each $x\in f^{-1}(Z)$. Moreover, following \cite{Guillemin:2010ti}, we know that if $f\pitchfork Z$, then $f^{-1}(Z)$ is a submanifold of $X$.

We note that  \thm{multijet} is actually a bit stronger than the version we have stated. In fact, it shows $T_Z$ is a \emph{residual} set,  that is, a countable intersection of open dense subsets of $C^\infty(M,N)$. 

The definition of compactified configuration spaces allows us to view $C_n[N] \subset (\R^k)^{n} \times (S^{k-1})^{n(n-1)} \times [0,1]^{n(n-1)(n-2)}$ as a metric space with the sup norm.  If we define the mapping $\mbox{\rm pr}_i$ to be the projection onto the $i$th space of the product, then this naturally leads to a metric on the set of continuous functions $C^0(C_n[M], C_n[N])$.
\begin{definition} With the above assumptions, the metric on the set  $C^0(C_n[M], C_n[N])$ is given by 
$$\|F - G\|_0 = \sup_{\p \in C_n[M]} \{\| \mbox{\rm pr}_i(F(\p)) - \mbox{\rm pr}_i(G(\p))\| \mid \mbox{\rm for all\ }i\}.$$
Thus, given a maps $f, g\co M\rightarrow N$ and $\epsilon >0$,  we say that the corresponding maps on configuration spaces $C_n[f],C_n[g]\co \cnm\rightarrow \cnn$ are {\em $\epsilon$-close} provided $\|C_n[f]-C_n[g]\|_0<\epsilon$.
\end{definition}

In order to prove a useful result for configuration spaces with special submanifolds, we first prove the following lemma: 

\begin{lemma}\label{lem:continuous}
For $M$ compact, the mapping $C_n[\ ] \co C^\infty(M,N) \to C^0(C_n[M], C_n[N])$ is continuous.
\end{lemma}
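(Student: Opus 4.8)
The plan is to show that a small perturbation of $f$ in the Whitney $C^\infty$-topology on $C^\infty(M,N)$ produces a small perturbation of the induced map $C_n[f]$ in the sup-metric on $C^0(C_n[M],C_n[N])$. Since $M$ is compact, it suffices to work with the $C^1$-topology on the source (the $C^0$-topology would morally do for the $\iota$-coordinate, but the $\pi_{ij}$ and $s_{ijl}$ coordinates on the boundary involve derivatives of $f$, by \thm{configgeom}(1), so we need $C^1$-control). Fix $f$ and $\epsilon > 0$; the goal is to produce a basic Whitney neighborhood of $f$ whose image under $C_n[\ ]$ lies in the $\epsilon$-ball around $C_n[f]$.

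First I would reduce to the coordinate description of $C_n[N]$. By \defn{config}, a point of $C_n[N]$ sits inside $N^{\times n} \times (S^{k-1})^{n(n-1)} \times [0,1]^{n(n-1)(n-2)}$, and $C_n[f]$ is characterized as the continuous extension to $C_n[M]$ of the map on the open stratum sending $\bfp \mapsto (f(\bfp_1),\dots,f(\bfp_n))$ together with the induced direction and ratio data. So I must estimate three families of quantities separately: (i) $\|f(\bfp_i) - g(\bfp_i)\|$ in each $N$-factor; (ii) the difference of the unit-vector coordinates $\pi_{ij}$; and (iii) the difference of the $s_{ijl} \in [0,1]$. For (i) this is immediate: a $C^0$-small perturbation of $f$ moves each $f(\bfp_i)$ by less than $\epsilon$, uniformly over the compact $M$. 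For (ii) and (iii), on the open stratum $C_n(M)$ these coordinates are explicit continuous functions of the points $f(\bfp_1),\dots,f(\bfp_n)$ that are \emph{distinct}; on the boundary strata, by \thm{configgeom}, they become directions of the form $\lim (f(\bfp_i)-f(\bfp_j))/\|\cdot\|$ computed along approaches within tangent spaces, hence they depend continuously on the $1$-jet of $f$ along $M$. The key structural fact to invoke is precisely \thm{functor}/\thm{configgeom}: the coordinate functions $\pi_{ij}, s_{ijl}$ on $C_n[N]$ are continuous, and $C_n[f]$ is built functorially from $f$, so the composite data vary continuously with $f$'s $1$-jet.

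The cleanest way to make this rigorous, avoiding a messy hands-on estimate near the fat diagonal, is to factor the argument through a blow-up or through the universal property of $C_n[\ ]$: realize $C_n[f]$ as the restriction of $C_n[\iota]$ where $\iota$ is the ambient embedding, and note that a Whitney-$C^\infty$-neighborhood of $f$ inside $\mathrm{Emb}^\infty(M,N)$ (which is open, as recalled just before the lemma) gives, after composing with the ambient embedding of $N$ in $\R^k$, a family of embeddings $M \hookrightarrow \R^k$ that are $C^1$-close to the fixed one. Then $C_n[\ ]$ applied to this family is a continuous assignment into $C_n[\R^k]$ because the defining map $\alpha_n$ of \defn{config} and its boundary extension are continuous in the relevant parameters — this is exactly the content of \thm{config} and \thm{configgeom} read with $f$ as a parameter. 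Concretely: given $\epsilon$, compactness of $C_n[M]$ and continuity of the coordinate functions produce a $\delta>0$ such that any embedding $C^1$-within $\delta$ of $f$ has $\|C_n[g]-C_n[f]\|_0<\epsilon$; one then takes the Whitney neighborhood $\mathcal N^1(f;\dots,\delta')$ for suitably small $\delta'$ (shrinking $\delta$ using the coordinate-chart comparison constants on the compact $M$) as the required open set.

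\textbf{Main obstacle.} The delicate point is uniform control of coordinates (ii) and (iii) near the boundary $\bdry C_n[M]$, i.e. near the fat diagonal: as $\bfp_i \to \bfp_j$, the naive difference $\|\tfrac{f(\bfp_i)-f(\bfp_j)}{\|f(\bfp_i)-f(\bfp_j)\|} - \tfrac{g(\bfp_i)-g(\bfp_j)}{\|g(\bfp_i)-g(\bfp_j)\|}\|$ involves dividing small quantities, so one cannot merely cite $C^0$-closeness of $f$ and $g$. The resolution is to use the $C^1$-closeness: a Taylor/mean-value argument writes $f(\bfp_i)-f(\bfp_j) = Df(\xi)(\bfp_i-\bfp_j) + o(\|\bfp_i-\bfp_j\|)$, so the direction is governed by $Df$ along the segment, and a $C^1$-bound on $f-g$ bounds the difference of directions uniformly; the compactified coordinates are designed so that this extends continuously to the limit, which is where \thm{configgeom}(3) (smoothness of $\pi_{ij}, s_{ijl}$ on each face) does the work. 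Once that uniform estimate is in hand, assembling the $\epsilon$-$\delta$ statement over the compact space $C_n[M]$ is routine.
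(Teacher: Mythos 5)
Your proposal is correct, and it lands on the same technical pressure points as the paper's argument: you break $C_n[N]$ into its coordinate factors (the $N$-factors, the $\pi_{ij}$-spheres, and the $s_{ijl}$-intervals), note that the $N$-factors need only $C^0$-control, and identify the only genuinely nontrivial step as the direction and ratio coordinates near the fat diagonal, where a mean-value/Taylor expansion shows that $C^1$-control of $f$ is what propagates to the compactified coordinates. Where you diverge is in how uniformity over $C_n[M]$ is organized. You run a direct $\epsilon$-$\delta$ argument: find a Whitney neighborhood of $f$ whose image under $C_n[\ ]$ stays in the $\epsilon$-ball, using compactness of $C_n[M]$ plus joint continuity of the coordinate functions in $(\p,f)$. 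The paper instead routes the whole problem through the exponential adjunction: because $C_n[M]$ is compact, the sup-metric topology on $C^0(C_n[M],C_n[N])$ is the compact-open topology, so $C_n[\ ]$ is continuous if and only if the adjoint $\widehat{C_n[\ ]} \co C_n[M]\times C^\infty(M,N)\to C_n[N]$ is continuous; this reduces the task to checking pointwise joint continuity coordinate-by-coordinate, with compactness uniformizing for free. Both routes are sound and the underlying computation at the fat diagonal is identical; the paper's version is structurally slicker (no explicit $\delta$-chasing or tube-lemma step), while yours is more elementary and makes explicit why $C^1$-closeness rather than $C^0$-closeness is the right hypothesis, something the paper leaves implicit in its boundary limit formula for $\pi_{ij}$. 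You also correctly observe that the map really lives on $\mathrm{Emb}^\infty(M,N)$, where $C_n[f]$ is actually defined — the paper is implicitly in the same boat.
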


\begin{proof}
Because $M$ is compact, so is $C_n[M]$. As discussed above, we take $C_n[N]$ with the  metric topology as a subspace of $A_n[\R^k]$. The metric topology on $C^0(C_n[M], C_n[N])$ is
the topology of compact convergence which coincides with the compact-open topology (\cite{Munkres}). This implies that $C_n[\ ] \co C^\infty(M,N) \to C^0(C_n[M], C_n[N])$ is continuous if and only if the adjoint $\widehat{C_n[\ \,]} \co C_n[M] \times C^\infty(M,N) \to C_n[N]$ is continuous. Since $C_n[N]$ as a subspace of
the product $(\R^k)^n \times (S^{k-1})^{n(n-1)} \times [0,1]^{n(n-1)(n-2)}$, then maps into $C_n[N]$ are continuous if and only if the compositions with projections onto a factor are continuous. Consider the composition:
$$C_n[M] \times C^\infty(M,N)  \mapright{\widehat{C_n[\ \,]}}{}  C_n[N] \mapright{\mbox{pr}_j}{} F'_j.$$
On the factors $F'_j = N$, the composition is evaluation on the corresponding factor of $C_n[M]$ and hence is continuous. When the factor is a sphere, on points $\bfp_i \neq
\bfp_j$ with $f(\bfp_i) \neq f(\bfp_j)$, the map is simply the composition of a function $f$ with $\pi_{ij}$ and hence is continuous. When points come together, either in $M$ or in $f(M)$, the image is in one of the strata of $C_n[N]$. In this case, we can view $\bfp_i = \bfp_j + t\vic{u}$ for $\vic{u}$ a unit vector. Then
$$\dfrac{f(\bfp_j + t\vic{u}) - f(\bfp_j)}{\|f(\bfp_j + t\vic{u}) - f(\bfp_j)\|} \quad \mathop{\longrightarrow}\limits_{t\to 0} \quad \dfrac1{\norm{\det Df(\bfp_j)} } Df(\bfp_j)(\vic{u}).$$
On $C^\infty(M,N)$ this composition is continuous.

Finally, if $F_j' = [0,1]$, then the composition is similarly analyzed using $s_{ijl}$ instead of $\pi_{ij}$ to establish continuity.
\end{proof}

%

\begin{corollary}
\label{cor:epsilonclose}
Let $M$ and $N$ be smooth manifolds, and assume $M$ is compact. Given $\epsilon > 0$ and $f\co M \hookrightarrow N \subset \R^k$ an embedding, there is an open set $U\subset C^\infty(M,N)$ containing $f$ such that for  $g \in \mbox{\rm Emb}^\infty(M,N) \cap U$,  the maps $C_n[f]$ and $C_n[g]$ are $\epsilon$-close: $\| C_n[f] - C_n[g] \|_0 < \epsilon.$
\end{corollary}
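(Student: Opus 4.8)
The plan is to deduce Corollary~\ref{cor:epsilonclose} as an essentially immediate consequence of Lemma~\ref{lem:continuous}, using only the definition of continuity for the map $C_n[\ ]$ together with the fact that smooth embeddings form an open subset of $C^\infty(M,N)$. The starting observation is that $\epsilon$-closeness of $C_n[f]$ and $C_n[g]$ is, by definition, precisely the statement that $C_n[g]$ lies in the open $\epsilon$-ball around $C_n[f]$ in the metric space $C^0(C_n[M], C_n[N])$ with the sup-norm $\|\cdot\|_0$. So the whole corollary is a pullback-of-an-open-set statement.

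Here is the sequence of steps I would carry out. First, invoke Lemma~\ref{lem:continuous}: since $M$ is compact, the map $C_n[\ ]\co C^\infty(M,N)\to C^0(C_n[M],C_n[N])$ is continuous (where the target carries the metric topology induced by $\|\cdot\|_0$). Second, let $B_\epsilon(C_n[f])$ denote the open $\epsilon$-ball about $C_n[f]$ in $C^0(C_n[M],C_n[N])$; this is open by definition of the metric topology. Third, set $U = (C_n[\ ])^{-1}\bigl(B_\epsilon(C_n[f])\bigr)$; by continuity this is open in $C^\infty(M,N)$, and it contains $f$ since $\|C_n[f]-C_n[f]\|_0 = 0 < \epsilon$. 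Fourth, observe that for any $g\in U$ we have by construction $\|C_n[f]-C_n[g]\|_0 < \epsilon$, i.e.\ $C_n[f]$ and $C_n[g]$ are $\epsilon$-close; in particular this holds for any $g\in \mathrm{Emb}^\infty(M,N)\cap U$. (The intersection with $\mathrm{Emb}^\infty(M,N)$ is only recorded because in applications one wants $g$ itself to be an embedding, which is consistent with $U$ being open since $\mathrm{Emb}^\infty(M,N)$ is open in $C^\infty(M,N)$ by Theorem~2.1.4 of \cite{Hirsch}; but it plays no role in the argument beyond that.)

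There is essentially no obstacle here: all the real work has already been done inside Lemma~\ref{lem:continuous}, whose proof handled the genuinely delicate point, namely that the configuration-space map remains continuous as points collide and the image lands in a boundary stratum (via the limiting direction $\tfrac{1}{\norm{\det Df(\bfp_j)}}Df(\bfp_j)(\vic{u})$ and the analogous $s_{ijl}$ computation). The one thing I would be careful to state cleanly is which topology sits on the target: we use that compact-open and compact-convergence topologies agree and that $C_n[M]$ is compact (both already noted in the excerpt), so that continuity of $C_n[\ ]$ into $C^0$ with the $\|\cdot\|_0$-metric is exactly what Lemma~\ref{lem:continuous} provides. With that in hand the corollary is the standard fact that a continuous map pulls back open balls to open sets containing the relevant preimage point.

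In short, the proof is: ``By Lemma~\ref{lem:continuous} the map $C_n[\ ]$ is continuous; take $U$ to be the preimage under $C_n[\ ]$ of the open $\epsilon$-ball about $C_n[f]$; this $U$ is open, contains $f$, and every $g\in U$ — in particular every embedding $g\in U$ — satisfies $\|C_n[f]-C_n[g]\|_0<\epsilon$.'' I would write it in three or four sentences.
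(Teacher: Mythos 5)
Your proof is correct and is essentially identical to the paper's: both define $U$ as the preimage under $C_n[\ ]$ of the open $\epsilon$-ball about $C_n[f]$, invoke Lemma~\ref{lem:continuous} for openness of $U$, and note that $\mathrm{Emb}^\infty(M,N)$ being open makes the intersection with $U$ open.
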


\begin{proof}
Let $U$ be the preimage of the open set of functions $C_n[M] \hookrightarrow C_n[N]$ within $\epsilon$ of $C_n[f]$ under $C_n[\ ]$. This is an open subset of $C^\infty(M,N)$ containing $f$ by the previous lemma. Since $\mbox{\rm Emb}^\infty(M,N)$ is open in $C^\infty(M,N)$, it meets $U$ in an open set. Choose $g$ in this open set.
\end{proof}

Let us pause to appreciate what we have proven here. Given an embedding $f\co M \hookrightarrow N$, where $M$ is compact, we can find a smooth embedding $g\co M \hookrightarrow N$ in a $C^\infty$-neighborhood of $f$, such that the corresponding maps between configuration spaces $C_n[f]$ and $C_n[g]$ are as $C^0$-close as we like. Note that more is probably true, for example that $C_n[f]$ and $C_n[g]$ are $C^m$-close for $m\ge 1$. However, we do not need such a result and have not proved it here.

Next, we see that \cor{epsilonclose} leads to two very useful results.

\begin{theorem}[{\bf Transversality Theorem for Configuration Spaces}]
\label{thm:transversality-config}
Let $M$ and $N$ be smooth manifolds with $M$ compact,  and $i \co M \hookrightarrow N \subset \R^k$  a smooth embedding with
corresponding embedding of configuration spaces $C_n(i)\co \cnmo\hookrightarrow\cnno$. Assume $Z$ is a submanifold of $\cnno$. Given an $\epsilon > 0$, there is a $C^\infty$-open neighborhood $U$ of $i$, in which there is, for all $m$, a $C^m$-dense set of smooth embeddings  $i'\co M\hookrightarrow N$,  with $\| C_n[i'] - C_n[i] \|_0 < \epsilon$, and $C_n(i')\pitchfork Z$. 
\end{theorem}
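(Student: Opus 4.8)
The plan is to deduce this from the $0$-Multijet Transversality Theorem (\thm{multijet}) by exhibiting, for an embedding $f$, the configuration-space evaluation map $C_n(f)$ as the $0$-multijet $j^0_n f$ followed by a submersion onto $\cnno$. Transversality of $C_n(f)$ to $Z$ will then be equivalent to transversality of $j^0_n f$ to the preimage of $Z$, and \thm{multijet} will supply the dense family; \cor{epsilonclose} will supply the $C^\infty$-open neighborhood that controls the $C^0$-distance between the induced maps.

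First I would set up the submersion. Let $\Pi\co J^0_n(M,N)\to N^{\times n}$ be the ``target projection'' $((\bfp_1,\bfq_1),\dots,(\bfp_n,\bfq_n))\mapsto(\bfq_1,\dots,\bfq_n)$, and let $\hat J=\Pi^{-1}(\cnno)$ be the open subset of $J^0_n(M,N)$ on which the target points $\bfq_i$ are pairwise distinct. In product charts $\Pi$ is a coordinate projection, so $\Pi|_{\hat J}\co\hat J\to\cnno$ is a surjective submersion; hence $\tilde Z:=(\Pi|_{\hat J})^{-1}(Z)$ is a submanifold of $\hat J$, and therefore of $J^0_n(M,N)$, of the same codimension as $Z$ in $\cnno$. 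Because an embedding $f\co M\hookrightarrow N$ is injective, $j^0_n f$ carries $\cnmo$ into $\hat J$, and $\Pi\circ j^0_n f=C_n(f)$ as maps $\cnmo\to\cnno$.

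The second ingredient is the standard submersion--transversality lemma: if $g\co X\to Y$ is a submersion and $W\subseteq Y$ is a submanifold, then a smooth map $h\co A\to X$ satisfies $h\transverse g^{-1}(W)$ if and only if $g\circ h\transverse W$ (a routine diagram chase using surjectivity of $Dg$ together with $\ker Dg_x\subseteq (Dg_x)^{-1}(TW)$). Taking $g=\Pi|_{\hat J}$, $W=Z$, and $h=j^0_n f$, this yields, for every embedding $f$, the equivalence $j^0_n f\transverse\tilde Z\iff C_n(f)\transverse Z$, so that $\mbox{\rm Emb}^\infty(M,N)\cap T_{\tilde Z}\subseteq\{f: C_n(f)\transverse Z\}$, where $T_{\tilde Z}=\{f\in C^\infty(M,N): j^0_n f\transverse\tilde Z\}$. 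Now I would assemble the conclusion: by \cor{epsilonclose} pick a $C^\infty$-open $U_0\ni i$ with $\|C_n[g]-C_n[i]\|_0<\epsilon$ for all embeddings $g\in U_0$, and set $U=U_0\cap\mbox{\rm Emb}^\infty(M,N)$, which is $C^\infty$-open and contains $i$ since $M$ is compact. By \thm{multijet} the set $T_{\tilde Z}$ is residual in $C^\infty(M,N)$ with the Whitney $C^\infty$-topology; intersecting with the open set $U$ and invoking the Baire property of this topology, $T_{\tilde Z}\cap U$ is dense in $U$, hence $C^m$-dense for every $m$. Every $i'\in T_{\tilde Z}\cap U$ is then a smooth embedding with $\|C_n[i']-C_n[i]\|_0<\epsilon$ and $C_n(i')\transverse Z$, which is exactly the asserted family.

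I expect the only real obstacle to be organizational: carefully matching the multijet picture with the configuration-space picture --- verifying that $\hat J$ is the correct open locus, that $j^0_n$ of an embedding lands in it, and that $\Pi|_{\hat J}$ is genuinely a submersion onto $\cnno$ --- plus the minor point-set bookkeeping needed to pass from residuality of $T_{\tilde Z}$ in the whole Whitney $C^\infty$-space to $C^m$-density inside the $C^\infty$-open set $U$. Note that no closedness or compactness hypothesis on $Z$ is used, since only the density half of \thm{multijet} is invoked.
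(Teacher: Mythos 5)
Your proposal is correct and lands on essentially the same construction as the paper: your $\tilde Z=(\Pi|_{\hat J})^{-1}(Z)$ is literally the paper's $W=sh(C_n(M)\times Z)$, just described as a preimage of the target projection rather than as a shuffled product, and these are the same subset of $J^0_n(M,N)$. Where the paper verifies the implication $j^0_n i'\transverse W\Rightarrow C_n(i')\transverse Z$ by an explicit tangent-space decomposition via the shuffle map, you instead quote the standard submersion--transversality lemma applied to $\Pi|_{\hat J}$; since the shuffle is exactly the chart in which $\Pi$ is a coordinate projection, this is the same diagram chase packaged more cleanly, and it also delivers the (unused here) converse implication for free. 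Your remark that only the density half of \thm{multijet} is needed, so no closedness of $Z$ is required, is correct and matches the paper's hypotheses.
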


\begin{proof} 
We embed $Z$ in $\cnmno$ as $W=sh(\cnmo\times Z)$, where the shuffle map is defined by $sh((p_1,\dots,p_n),(v_1,\dots,v_n)) = ((p_1,v_1),(p_1,v_1), \dots, (p_n,v_n))$. This map is a diffeomorphism. Using \thm{multijet}, we know that the set $T_W=\{f\in C^\infty(M,N) \, \vert \, j_n^0f\transverse W\}$ is $C^m$-dense in $C^\infty(M,N)$ for all $m$.

The preimage of the $\epsilon$-ball around $C_n(i)$ is an open set $U$ in $C^\infty(M,N)$ containing $i$. Since $\mbox{\rm Emb}^\infty(M,N)$ is open, so is $U \cap \mbox{\rm Emb}^\infty(M,N)$, and this open set meets the set $T_W$. Choose $i'$ in the intersection $T_W \cap U \cap \mbox{\rm Emb}^\infty(M,N)$. 
By Theorem~\ref{thm:multijet}, $j_n^0i' \pitchfork W$, and the set of such $i'$ remains $C^m$-dense in $C^\infty(M,N)$ for all $m$.

We want to show that this implies $C_n(i') \pitchfork Z$. By definition, $j_n^0i'$ transverse to $W$ means that for all $\bfp$ with $j_n^0i'(\bfp)\in j_n^0(i')(M)\cap W$, 
$$ T_{j_n^0i'(\bfp)}\cnmno \cong T_{j_n^0i'(\bfp)}W \oplus Dj_n^0i'(T_\bfp\cnmo).
$$
Since $W\cong C_n(M)\times Z$, then $T_{j_n^0i'(\bfp)} W \cong T_\bfp\cnmo \oplus T_{C_n(i')(\bfp)}Z.$
The key step here is to use the shuffle map to rewrite the decomposition.
Since,
$$j_n^0i'(\bfp)_ = ((\bfp_1,i'\bfp_1), \dots ,(\bfp_n,i'\bfp_n)) \xrightarrow{sh^{-1}} ((\bfp_1,\dots,\bfp_n),(i'\bfp_1,\dots,i'\bfp_n))\in C_n(M)\times C_n(N).
$$
then
$$Dj_n^0i'(T_\bfp\cnmo) \cong T_\bfp\cnmo \oplus Di'(T_\bfp \cnmo).
$$
The shuffle maps also allows us to identify $$ T_{j_n^0i'(\bfp)}\cnmno \cong T_\bfp\cnmo \oplus T_{C_n(i')(\bfp)}\cnno.
$$
Putting all the maps together, we deduce 
$$  T_{C_n(i')(\bfp)}\cnno \cong  T_{C_n(i')(\bfp)}Z \oplus  Di'(T_\bfp \cnmo).$$
In other words, $C_n(i')\pitchfork Z$.
\end{proof}

\begin{theorem}[{\bf Transversality Theorem for Compactified Configuration Spaces}]\label{thm:transversality-compact-config}
Let $M$ and $N$ be smooth manifolds, with $M$ compact, and $i \co M \hookrightarrow N \subset \R^k$ a smooth embedding with a 
corresponding embedding of compactified configuration spaces $C_n[i]\co \cnm\hookrightarrow\cnn$. Assume $Z$ is a closed topological space contained in $\cnn$, such that $Z\cap \cnno$ is a submanifold of $\cnno$, and  $\bdry Z\subset \bdry \cnn$. Also assume that $\bdry Z$ is disjoint from $\bdry C_n[i(M)]$. 

Then for any $\epsilon > 0$,  there is a $C^\infty$-open neighborhood of $i$, in which there is, for all $m$, a $C^m$-dense set of smooth embeddings  $i'\co M\hookrightarrow N$,  with $\| C_n[i'] - C_n[i] \|_0 < \epsilon$, and $C_n[i']\pitchfork Z$, and for which $\bdry Z$ and $\bdry C_n[i'(M)]$ are disjoint in $\bdry\cnn$.
\end{theorem}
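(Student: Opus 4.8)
The plan is to reduce the statement to the interior version, \thm{transversality-config}, applied to the honest submanifold $Z\cap\cnno$ of $\cnno$, and to use the boundary-disjointness hypothesis to control what happens on $\bdry\cnn$.

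\emph{Separating the boundaries.} Since $M$ is compact, $\cnm$ is compact, so $\bdry C_n[i(M)]=C_n[i](\bdry\cnm)$ is a compact subset of the metric space $\cnn\subset A_n[\R^k]$; and $\bdry Z$ is closed in $\cnn$ and, by hypothesis, disjoint from $\bdry C_n[i(M)]$. A compact set and a disjoint closed set in a metric space lie at positive distance, so there is a $\delta>0$ such that the $\delta$-neighborhood of $\bdry C_n[i(M)]$ in $\cnn$ misses $\bdry Z$. Now for any embedding $i'\co M\hookrightarrow N$, \thm{functor} says that the evaluation map $C_n[i']$ respects the stratification; being a homeomorphism onto $C_n[i'(M)]$ that carries the open stratum $\cnmo$ onto $C_n[i'(M)]\cap\cnno$, it therefore carries $\bdry\cnm$ onto $\bdry C_n[i'(M)]$, and $\bdry C_n[i'(M)]=C_n[i'(M)]\cap\bdry\cnn$. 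Hence, if $\|C_n[i']-C_n[i]\|_0<\delta$, then for every $\p\in\bdry\cnm$ the point $C_n[i'](\p)$ lies within $\delta$ of $C_n[i](\p)\in\bdry C_n[i(M)]$, so $\bdry C_n[i'(M)]$ is disjoint from $\bdry Z$. Since $\bdry Z\subset\bdry\cnn$, this also forces $C_n[i'(M)]\cap Z = C_n[i'(M)]\cap(Z\cap\cnno)\subset\cnno$: every intersection of $C_n[i'(M)]$ with $Z$ occurs in the interior, where $Z$ agrees with the genuine submanifold $Z\cap\cnno$.

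\emph{Invoking the interior theorem and concluding.} Apply \thm{transversality-config} to the submanifold $Z\cap\cnno$ of $\cnno$ with the constant $\epsilon'=\min\{\epsilon,\delta\}$. This produces a $C^\infty$-open neighborhood $U$ of $i$ in which, for every $m$, the smooth embeddings $i'$ with $\|C_n[i']-C_n[i]\|_0<\epsilon'$ and $C_n(i')\pitchfork(Z\cap\cnno)$ form a $C^m$-dense set. For any such $i'$: it is $\epsilon$-close to $i$ on configuration spaces because $\epsilon'\le\epsilon$; by the previous paragraph (with $\epsilon'\le\delta$) the boundaries $\bdry Z$ and $\bdry C_n[i'(M)]$ are disjoint in $\bdry\cnn$, and moreover $C_n[i'(M)]\cap Z\subset\cnno$; and on $\cnno$, where $Z$ is just $Z\cap\cnno$ and $C_n(i')\pitchfork(Z\cap\cnno)$ by construction, we obtain $C_n[i']\pitchfork Z$. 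These $i'$ are precisely the promised $C^m$-dense family inside $U$.

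\emph{Expected main obstacle.} The substantive work has already been done in \thm{transversality-config}; the new ingredients are the metric separation of the boundaries and the bookkeeping reducing ``$C_n[i']\pitchfork Z$'' to interior transversality once the boundaries are apart. I expect the one point genuinely needing care is the claim that the evaluation map sends $\bdry\cnm$ \emph{onto} $\bdry C_n[i'(M)]$ and that $\bdry C_n[i'(M)]=C_n[i'(M)]\cap\bdry\cnn$ — this is exactly where the stratification-respecting functoriality of \thm{functor} is essential — together with the observation that a $\delta$-small perturbation keeps the perturbed boundary inside a $\delta$-tube around $\bdry C_n[i(M)]$.
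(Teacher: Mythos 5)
Your argument is correct, and while it follows the paper's overall strategy — separate the boundaries metrically, then reduce to \thm{transversality-config} — it organizes the boundary estimate differently and, I think, more cleanly. The paper observes that $C_n[i(M)]\cap Z$ is compact and disjoint from $\bdry\cnn$, truncates $Z$ to a subset $\hat{Z}$ lying a definite distance from $\bdry\cnn$, and applies the interior theorem to $\hat{Z}$; the passage from $C_n[i'(M)]\pitchfork\hat{Z}$ to $C_n[i'(M)]\pitchfork Z$ then depends on the \emph{perturbed} intersection set staying inside $\hat{Z}$, which the paper treats briefly. You instead separate the compact set $\bdry C_n[i(M)]$ from the closed set $\bdry Z$, note that a $\delta$-small (in $\|\cdot\|_0$) perturbation carries $\bdry\cnm$ into the $\delta$-tube around $\bdry C_n[i(M)]$ so the boundaries stay disjoint, and use the stratification-respecting functoriality of \thm{functor} to identify $\bdry C_n[i'(M)]=C_n[i'(M)]\cap\bdry\cnn$, concluding that $C_n[i'(M)]\cap Z\subset\cnno$; you then apply the interior theorem to all of $Z\cap\cnno$ with no truncation. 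What this buys you is twofold: the quantity you control after perturbation is the boundary, for which the $\|\cdot\|_0$ estimate is immediate, rather than the intersection set, which a priori could jump; and your argument explicitly delivers the ``$\bdry Z$ and $\bdry C_n[i'(M)]$ are disjoint'' clause of the conclusion, which the paper's proof leaves implicit. Both versions invoke the same underlying results, so there is no difference in generality.
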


\begin{proof} Recall that $M$ and $N$ are embedded in $\R^k$ for $k$ large. Since $M$ is compact, the closed set $C_n[i(M)]\cap Z$ is also compact. By assumption, $C_n[i(M)]\cap Z$ is also disjoint from the closed set $\bdry \cnn$, thus it is separated from it by some $\epsilon >0$. Take the intersection of $Z$ with the complement of an $\epsilon/2$ neighborhood of $\bdry \cnn$, and denote this $\hat{Z}$.  The set $\hat{Z}$ is an open manifold contained in $\cnno$, which remains a bounded distance from $\bdry \cnn$.

We apply \thm{transversality-config} to this setting, so there is an embedding $i'\co M\hookrightarrow N$ 
with  $C_n[i']\co \cnmo \hookrightarrow \cnno$ transverse to $\hat{Z}$, which by \cor{epsilonclose}, we can choose so that  
$C_n[i'(M)]$ is $\epsilon/2$-close to $C_n[i(M)]$. We can thus choose the perturbation $i'$ so that  $C_n[i'(M)]\cap \hat{Z}$ is at least $(\nicefrac{3}{4})\epsilon$ away from $\bdry \cnn$. Thus $C_n[i']\pitchfork \hat{Z}$ implies $C_n[i'(M)]\pitchfork \hat{Z}$ which in turn implies $C_n[i'(M)]\pitchfork Z$.
\end{proof}

There are many possible applications of \thm{transversality-compact-config}. The first one we give is to the square-peg problem in this paper, another is to $(k+1)$-simplices inscribed in smoothly embedded $(k-1)$-spheres found in \cite{Simplices}. Thus, we let $M=S^l$ be an $l$-sphere embedded in $N=\R^k$, and $Z$ is a submanifold of $\cnr$ satisfying special conditions. For our work $C_n[i(S^l)]\cap Z$ will usually represent certain inscribed configurations of points in $S^l$. In this setting \thm{transversality-compact-config} becomes the following.

\begin{corollary}
\label{cor:transversality}
Suppose there is a smooth embedding $i\co S^l\hookrightarrow\R^k$ of an $l$-sphere in $\R^k$, with a corresponding embedding of compactified configuration spaces $C_n[i]\co \cnsl \hookrightarrow \cnr$.  Assume that $Z$ is a closed topological space contained in $\cnr$ such that $Z \cap \cnro$ is a submanifold of $\cnro$, and $\bdry Z\subset \bdry\cnr$. Also assume that  $\bdry Z$ is disjoint from $\bdry C_n[i(S^l)]$. 

 Then for any $\epsilon > 0$,  there is a $C^\infty$-open neighborhood of $i$, in which there is, for all $m$, a $C^m$-dense set of smooth embeddings  $i'\co S^l \hookrightarrow \R^k$,  with $\| C_n[i'] - C_n[i] \|_0 < \epsilon$, and $C_n[i']\pitchfork Z$, and for which $\bdry Z$ and $\bdy C_n[i'(S^l)]$ are disjoint in $\bdy \cnr$. 
\end{corollary}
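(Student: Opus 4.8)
The plan is to observe that Corollary~\ref{cor:transversality} is nothing more than the specialization of Theorem~\ref{thm:transversality-compact-config} to the case $M = S^l$ and $N = \R^k$. First I would note that an $l$-sphere $S^l$ is a smooth compact manifold, so the hypothesis ``$M$ compact'' of \thm{transversality-compact-config} is satisfied, and $\R^k$ is trivially embedded in itself (as $N \subset \R^k$ with the identity embedding). The embedding $i \co S^l \hookrightarrow \R^k$ then plays the role of $i \co M \hookrightarrow N$, and $C_n[i]\co \cnsl \hookrightarrow \cnr$ is exactly the induced embedding of compactified configuration spaces provided by \thm{functor}.

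Next I would check that the hypotheses on $Z$ match verbatim: we are given that $Z$ is a closed topological space contained in $\cnr = C_n[N]$, that $Z \cap \cnro$ is a submanifold of $\cnro = \cnno$, that $\bdry Z \subset \bdry \cnr = \bdry C_n[N]$, and that $\bdry Z$ is disjoint from $\bdry C_n[i(S^l)] = \bdry C_n[i(M)]$. These are precisely the four standing assumptions on $Z$ in \thm{transversality-compact-config}. Having matched all hypotheses, I would invoke \thm{transversality-compact-config} directly: for any $\epsilon > 0$ it produces a $C^\infty$-open neighborhood of $i$ containing, for all $m$, a $C^m$-dense set of smooth embeddings $i' \co S^l \hookrightarrow \R^k$ with $\|C_n[i'] - C_n[i]\|_0 < \epsilon$, with $C_n[i'] \pitchfork Z$, and with $\bdry Z$ disjoint from $\bdry C_n[i'(S^l)]$ in $\bdry \cnr$. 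This is the conclusion of the corollary, so the proof is complete.

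There is essentially no obstacle here — the ``hard work'' was already done in \thm{transversality-config} (via the Multijet Transversality Theorem and the shuffle-map identification) and in \thm{transversality-compact-config} (via the $\epsilon$-neighborhood truncation argument that pushes the boundary-disjointness through a small perturbation). The only thing worth stating carefully is the translation of notation: $C_n[N] = \cnr$, $\cnno = \cnro$, and $\bdy\cnn = \bdy\cnr$ under the substitution $M = S^l$, $N = \R^k$. I would therefore keep the proof to a single short paragraph that says exactly this.

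\begin{proof}
Take $M = S^l$ and $N = \R^k$ in \thm{transversality-compact-config}. Then $M$ is a smooth compact manifold, $N = \R^k$ is trivially smoothly embedded in $\R^k$, and $i \co S^l \hookrightarrow \R^k$ is a smooth embedding with induced embedding of compactified configuration spaces $C_n[i] \co \cnsl \hookrightarrow \cnr$ as in \thm{functor}. Under this substitution, $C_n[N] = \cnr$, $\cnno = \cnro$, and $\bdry C_n[N] = \bdry \cnr$. The hypotheses on $Z$ are exactly those of \thm{transversality-compact-config}: $Z$ is closed in $\cnr$, $Z \cap \cnro$ is a submanifold of $\cnro$, $\bdry Z \subset \bdry \cnr$, and $\bdry Z$ is disjoint from $\bdry C_n[i(S^l)]$. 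Applying \thm{transversality-compact-config} yields, for any $\epsilon > 0$, a $C^\infty$-open neighborhood of $i$ in which there is, for all $m$, a $C^m$-dense set of smooth embeddings $i' \co S^l \hookrightarrow \R^k$ with $\|C_n[i'] - C_n[i]\|_0 < \epsilon$, $C_n[i'] \pitchfork Z$, and $\bdry Z$ disjoint from $\bdry C_n[i'(S^l)]$ in $\bdry \cnr$, which is the assertion of the corollary.
\end{proof}
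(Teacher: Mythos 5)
Your proposal is correct and matches the paper's own treatment exactly: the paper explicitly introduces the corollary as the specialization of \thm{transversality-compact-config} to the case $M=S^l$, $N=\R^k$, and does not give a separate proof precisely because the hypotheses and conclusion carry over verbatim under that substitution. Your careful verification that each hypothesis on $Z$ and on $i$ matches, and that the notation translates as $C_n[N]=\cnr$, $\cnno=\cnro$, $\bdy\cnn=\bdy\cnr$, is just the explicit spelling-out of what the paper leaves implicit.
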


\subsection{Deformations}\label{sect:Haefliger}
In this subsection, we restrict our attention to $S^l$, an $l$-sphere embedded in $\R^k$. We want to be able to deform standard spheres into spheres of interest, and then consider what happens on the level of configuration spaces.  We know such a deformation of spheres exists due to a result of A.~Haefliger, which we have stated in a form useful to us (actually, his result is stronger). 

\begin{theorem}
{\rm\cite{Haefliger:1961wr}} Any two differentiable embeddings of $S^l$ in $\R^k$ are homotopic through a differentiable
isotopy in $\R^K \supset \R^k$ when $K > 3(l+1)/2$.
\label{thm:isotopy}
\end{theorem}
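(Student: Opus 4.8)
The plan is to reduce the claim to a short piece of equivariant obstruction theory by means of Haefliger's deleted-product criterion, and then to be frank that the genuine content — and the only place the hypothesis $K>3(l+1)/2$ is really used — lies inside that criterion, which I would quote from \cite{Haefliger:1961wr}. First I would note that the statement only asks for an isotopy \emph{inside $\R^K$}, so we may treat the two given embeddings as maps $f_0,f_1\co S^l\hookrightarrow\R^k\subset\R^K$; since $\R^K$ is contractible they are automatically homotopic as maps, so the real assertion is that \emph{every} smooth embedding of $S^l$ in $\R^K$ is isotopic to the standard one once $2K>3(l+1)$.

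Next I would pass to the two-point deleted product of $S^l$, which in the notation of \sect{config} is the configuration space $C_2(S^l)=S^l\times S^l\setminus\Delta$ equipped with the free $\Z/2$-action that swaps the two points. For a smooth embedding $f\co S^l\hookrightarrow\R^K$ the Gauss-type map
$$g_f\co C_2(S^l)\longrightarrow S^{K-1},\qquad g_f(p_1,p_2)=\frac{f(p_2)-f(p_1)}{\norm{f(p_2)-f(p_1)}}$$
is $\Z/2$-equivariant for the antipodal action on $S^{K-1}$ (swapping the points negates $g_f$), and by \thm{configgeom} it is nothing but $\pi_{21}\circ C_2[f]$ and so extends continuously over the compactification $C_2[S^l]$. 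Haefliger's deleted-product criterion \cite{Haefliger:1961wr} asserts that in the metastable range $2K\ge 3(l+1)$ the assignment $f\mapsto g_f$ induces a bijection from isotopy classes of embeddings $S^l\hookrightarrow\R^K$ onto $\Z/2$-equivariant homotopy classes of equivariant maps $C_2(S^l)\to S^{K-1}$. So it suffices to prove that any two such equivariant maps are equivariantly homotopic.

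To finish I would first compute the equivariant homotopy type of the source: the map $(p_1,p_2)\mapsto (p_2-p_1)/\norm{p_2-p_1}$ (again a restriction of $\pi_{21}$) is an equivariant fibre bundle $C_2(S^l)\to S^l$ whose fibre over $v$ is the contractible open hemisphere $\{p\in S^l:\langle p,v\rangle<0\}$, so $C_2(S^l)\simeq_{\Z/2}S^l$ with the antipodal action. We are thus reduced to equivariant maps $S^l\to S^{K-1}$ between free $\Z/2$-spaces, i.e.\ to homotopy classes of sections of the associated bundle $S^l\times_{\Z/2}S^{K-1}\to\R\mathrm{P}^l$ with fibre $S^{K-1}$. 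The obstructions to homotoping two sections lie in $H^i(\R\mathrm{P}^l;\pi_i(S^{K-1}))$ (local coefficients) for $0\le i\le l$, and $\pi_i(S^{K-1})=0$ for all $i\le K-2$; since $2K>3(l+1)$ forces $K\ge l+2$, every one of these groups vanishes. Hence the maps $g_f$ are all equivariantly homotopic, so all embeddings of $S^l$ in $\R^K$ are isotopic, and in particular $f_0$ and $f_1$ are joined by a differentiable isotopy in $\R^K$.

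The routine parts of this plan are the reduction to maps, the identification $C_2(S^l)\simeq_{\Z/2}S^l$ with the antipodal action, and the obstruction-theory vanishing. The main obstacle is Haefliger's deleted-product criterion itself — that in the metastable range the equivariant two-point Gauss map is a complete isotopy invariant. Its proof is a delicate general-position-and-surgery argument: one perturbs a generic homotopy between $f_0$ and $f_1$, regarded level-preservingly as a map $S^l\times[0,1]\to\R^K\times[0,1]$, to be self-transverse, and then cancels its double-point locus — which the hypothesis makes roughly half the dimension of the source, hence amenable to Whitney-type finger moves — in pairs. This is carried out directly in \cite{Haefliger:1961wr}; in a fully self-contained treatment one would either reproduce that argument or take the criterion as a black box.
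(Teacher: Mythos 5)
The paper states \thm{isotopy} as a quoted result of Haefliger and supplies no proof of its own, so there is nothing internal to compare against; the only question is whether your reconstruction is sound.

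Your reduction is correct modulo the black box you honestly flag. Since $\R^K$ is contractible, the theorem does indeed amount to Haefliger's unknotting statement that every smooth embedding $S^l\hookrightarrow\R^K$ is isotopic to the standard one once $2K>3(l+1)$. Your computation of the equivariant homotopy type of the deleted product is right: the direction map $C_2(S^l)\to S^l$ given by $(p_1,p_2)\mapsto(p_2-p_1)/\norm{p_2-p_1}$ has contractible fibres (the condition for the chord from $p_1$ in direction $v$ to re-enter the sphere at positive time is exactly $\langle p_1,v\rangle<0$), and is $\Z/2$-equivariant for the swap/antipodal actions, so $C_2(S^l)\simeq_{\Z/2}S^l$. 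The passage to sections of $S^l\times_{\Z/2}S^{K-1}\to\R\mathrm{P}^l$ and the obstruction-theoretic vanishing are standard and correctly executed: $2K>3(l+1)$ gives $K\ge l+2$, hence $\pi_i(S^{K-1})=0$ for all $i\le l=\dim\R\mathrm{P}^l$.

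One small remark on routing and attribution. The deleted-product criterion — that in the metastable range the equivariant Gauss map on $C_2$ is a complete isotopy invariant — is usually credited to Haefliger's slightly later Bourbaki exposition rather than to the 1961 paper the statement cites; the 1961 paper proves unknotting directly by the general-position/Whitney-trick argument you sketch in your final paragraph. Since that Whitney-trick argument already proves unknotting outright, routing through the deleted-product criterion and then doing equivariant obstruction theory is a somewhat longer path to the same destination: the obstruction-theory step you carry out is, in effect, re-deriving the ``everything unknots'' consequence that the criterion's proof already contains. This is not an error — it is a legitimate and clean way to organize the argument, and it isolates the one nontrivial input — but if you wanted the shortest self-contained treatment you would skip the criterion and prove unknotting directly by cancelling double points of a generic level-preserving homotopy, which is what Haefliger actually does and what the dimensional hypothesis is tailored for.
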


Generally such an isotopy must pass through spheres embedded in a higher-dimensional space, as when the
spheres are knotted. The classical case of knots in $\R^3$ requires embedding a knot in $\R^4$ for unknotting. 
Since differentiable knotting is stronger than topological knotting and we prefer to work in
the differentiable category, we need even more extra room to work.\footnote{As before.} In this case, simply
use the usual embeddings $\R^k \hookrightarrow \R^{k+1}  \hookrightarrow \cdots  \hookrightarrow \R^K$
to achieve $K > 3(l+1)/2$. and $\R^k \oplus \vec{0}  \hookrightarrow \R^K$.

There are some useful generalizations of the theorem of Haefliger that may be applied
to obtain isotopies between embeddings. The foundational example, the Whitney-Wu Unknotting Theorem~\cite{MR137124,MR1503303,MR104272}, states
that if $N$ is a compact, connected $n$-manifold with $n\geq 2$ and $m \geq 2n +1$ then any two embeddings
of $N$ into $\R^m$ are isotopic. In the cases of interest in this paper, $N = S^l$ provides a geometrically
satisfying setting for the kinds of special inscribed configurations we study. It is possible to extend our methods
to other manifolds embedded in Euclidean space for which the existence of special inscribed configurations 
may be more difficult and the initial conditions for our techniques harder to find. We leave this to the reader, as the robustness of transversality
arguments cannot be underestimated.


Suppose  $Z$ is a subspace of $\cnr$, typically defined by geometric conditions. We want to understand what happens to the intersection of $Z$ with the configuration spaces of the (isotopic) embedded spheres. It turns out that the homology classes are preserved.

\begin{theorem} Suppose there are two embeddings $\eta, i:S^l\hookrightarrow \R^k$ of an $l$-sphere in $\R^k$. Assume that $Z$ is a closed topological space contained in $\cnr$ such that $Z \cap \cnro$ is a submanifold of $\cnro$, $\bdry Z\subset \bdry\cnr$, and $\bdry Z$ is disjoint from $\bdry C_n[i(S^l)]$.  Also assume that both $C_n[i]$ and $C_n[\eta]$ are transverse to $Z$. Then in $Z$,  the homology class of $C_n[i(S^l)]\cap Z$ and $C_n[\eta(S^l)]\cap Z$  are equal.
\label{thm:homology}
\end{theorem}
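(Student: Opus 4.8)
I would deduce the equality of homology classes by constructing, from an isotopy between $i$ and $\eta$, a compact bordism sitting inside $\cnsl\times[0,1]$ that maps into $Z$ and whose two ends are $C_n[i(S^l)]\cap Z$ and $C_n[\eta(S^l)]\cap Z$. The isotopy comes from Haefliger's theorem (\thm{isotopy}), and the transversality needed to cut out the bordism comes from a relative version of the multijet argument of \sect{transversality}.

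\textbf{Step 1: produce the isotopy, and make room for it.} By \thm{isotopy}, after including $\R^k$ in $\R^K$ with $K>3(l+1)/2$, there is a differentiable isotopy $H\co S^l\times[0,1]\to\R^K$ with $H_0=i$, $H_1=\eta$ (composed with $\R^k\hookrightarrow\R^K$). Since the isotopy generally leaves $\R^k$, I replace $Z$ by a thickening $Z_K\subset C_n[\R^K]$: near $\cnr$ inside $C_n[\R^K]$, collapsing the $\R^{K-k}$-component of each point of a configuration defines a submersion onto $\cnr$, and $Z_K$ is the preimage of $Z$ under it, i.e.\ $Z$ thickened in the $n(K-k)$ normal directions. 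Then $Z_K\cap\cnr=Z$; the inclusion $Z\hookrightarrow Z_K$ is a homotopy equivalence (the fibres are contractible); $Z_K$ inherits the structural hypotheses on $Z$ (so $Z_K\cap C_n(\R^K)$ is a submanifold of $C_n(\R^K)$, $\bdry Z_K\subset\bdry C_n[\R^K]$, and $\bdry Z_K$ is disjoint from the boundaries of $C_n[i(S^l)]$ and $C_n[\eta(S^l)]$); and, because the normal directions lie in $TZ_K$ and are complementary to $T\cnr$, the hypotheses $C_n[i]\transverse Z$, $C_n[\eta]\transverse Z$ in $\cnr$ upgrade to $C_n[i]\transverse Z_K$, $C_n[\eta]\transverse Z_K$ in $C_n[\R^K]$. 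Finally $C_n[i(S^l)]\cap Z_K=C_n[i(S^l)]\cap Z$ and likewise for $\eta$, since $C_n[i(S^l)],C_n[\eta(S^l)]\subset\cnr$.

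\textbf{Step 2: make the induced homotopy transverse.} By the functoriality of \thm{functor} and the continuity of \lem{continuous}, the isotopy induces $\mathcal{H}\co\cnsl\times[0,1]\to C_n[\R^K]$ with $\mathcal{H}(\p,t)=C_n[H_t](\p)$, restricting to $C_n[i]$ at $t=0$ and to $C_n[\eta]$ at $t=1$. I would then run the argument of \sect{transversality} in a parametrized, endpoint-relative form: perturb the family $H_t$, rel $t\in\{0,1\}$, through embeddings $S^l\hookrightarrow\R^K$ to a nearby family $H'_t$ (still $H'_0=i$, $H'_1=\eta$) whose induced map $\mathcal{H}'$ is transverse to $Z_K$. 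Because $\mathcal{H}'$ already agrees with the transverse maps $C_n[i]$, $C_n[\eta]$ at the two ends and transversality is an open condition, no perturbation is needed there; and for a small enough perturbation $(\mathcal{H}')^{-1}(Z_K)$ avoids $\bdry\cnsl\times[0,1]$, using that $C_n[i(S^l)]\cap Z$ and $C_n[\eta(S^l)]\cap Z$ are compact and boundary-disjoint, together with the fact---application-specific, and proved for square-like quadrilaterals in \sect{slq}---that $\bdry Z$ remains disjoint from $\bdry C_n[H'_t(S^l)]$ along the isotopy.

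\textbf{Step 3: extract the bordism and conclude.} With $\mathcal{H}'\transverse Z_K$, the set $\mathcal{C}:=(\mathcal{H}')^{-1}(Z_K)$ is a compact manifold-with-boundary in $\cnsl\times[0,1]$ with $\bdry\mathcal{C}=\big(C_n[i]^{-1}(Z_K)\times\{0\}\big)\sqcup\big(C_n[\eta]^{-1}(Z_K)\times\{1\}\big)$, and $\mathcal{H}'|_{\mathcal{C}}$ maps $\mathcal{C}$ into $Z_K$. Since $C_n[i]$ and $C_n[\eta]$ are embeddings, $\mathcal{H}'|_{t=0}$ carries $C_n[i]^{-1}(Z_K)$ isomorphically onto $C_n[i(S^l)]\cap Z$ and $\mathcal{H}'|_{t=1}$ carries $C_n[\eta]^{-1}(Z_K)$ onto $C_n[\eta(S^l)]\cap Z$; thus $\mathcal{C}$, pushed into $Z_K$ by $\mathcal{H}'$, is a bordism between these two cycles, so they represent the same class in $H_*(Z_K)$, hence in $H_*(Z)$ by the homotopy equivalence of Step 1. (With $\Z/2$ coefficients nothing more is needed; with $\Z$ coefficients one carries along the coorientations supplied by transversality.) I expect the main obstacle to be Step 2: establishing the \emph{relative, parametrized} analogue of \thm{transversality-compact-config}, namely that the isotopy can be perturbed rel its endpoints within the class of maps $C_n[\,\cdot\,]$ of embeddings of $S^l$, so that the induced map of configuration spaces meets $Z_K$ transversally while keeping the preimage off the boundary. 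The remaining steps are bookkeeping, though the construction and verification of $Z_K$ also require some care.
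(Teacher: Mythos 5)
Your proposal follows the same overall outline as the paper: use Haefliger's theorem to get an isotopy, push it to configuration space, make it transverse to $Z$, and read off a bordism between $C_n[i(S^l)]\cap Z$ and $C_n[\eta(S^l)]\cap Z$ inside $Z$. The notable difference is in Step~2, where you insist on perturbing the isotopy $H_t$ \emph{through embeddings} $S^l\hookrightarrow\R^K$ so that the perturbed $\mathcal{H}'$ is still of the form $C_n[\,\cdot\,]$, and you correctly flag this ``relative, parametrized multijet'' statement as the main obstacle. The paper sidesteps this entirely: it applies the Transversality Homotopy Extension theorem (Guillemin--Pollack) directly to $H\co C_n(S^l)\times I\to C_n(\R^K)$ as a smooth map, rel the two ends where it is already transverse, with no requirement that the perturbed map be induced by an underlying family of embeddings. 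The bordism $(H')^{-1}(Z)$ that results maps into $Z$ regardless of whether $H'$ ``comes from'' curves, so staying inside the class of embeddings buys you nothing here and only creates work. You should drop that constraint.

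On the other hand, your $Z_K$-thickening in Step~1 addresses a genuine gap that the paper glosses over. When $k\le 3(l+1)/2$ one must pass to $\R^K$, but both endpoints $C_n[i]$ and $C_n[\eta]$ land entirely inside $\cnro\subset C_n(\R^K)$; transversality of those maps to $Z$ as a submanifold of $C_n(\R^K)$ therefore fails on codimension grounds (the image cannot supply the $n(K-k)$ normal directions to $\cnro$), so the homotopy extension theorem cannot be invoked rel the ends as stated. Replacing $Z$ by its preimage $Z_K$ under the collapse $C_n[\R^K]\to\cnr$ restores the correct codimension, makes the endpoint transversality hypotheses upgrade to $C_n(\R^K)$, and deformation retracts to $Z$, exactly as you say. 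That is a more careful treatment than the paper's one-sentence remark. You are also right that one must check the preimage bordism stays away from $\bdry(\cnsl\times I)$; the paper's hypotheses only name boundary-disjointness for $i$ (and transversality for both), so in applications one does need the boundary-disjointness to persist along the isotopy, which is precisely what the square-like-quadrilateral lemmas of Section~\ref{sect:slq} deliver.

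In short: keep Steps~1 and~3, but in Step~2 replace ``perturb through embeddings'' by a direct application of the Transversality Homotopy Extension theorem to the map $\mathcal{H}$ rel its ends, targeting $Z_K$. That eliminates the obstacle you identified and gives the paper's proof, sharpened by your $Z_K$ construction.
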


\begin{proof}
By Haefliger's Theorem $\eta$ is homotopic to $i$ because they are in the same path component of $\mbox{\rm Emb}(S^l,\R^k)$, as long as $k>3(l+1)/2$. By functorality, this gives a homotopy $H\co \cnslo \times I\rightarrow \cnro$, and $H(-,0)$ and $H(-,1)$ are both transverse to $Z$.

After applying the Transversality Homotopy Extension theorem (see for instance  \cite{Guillemin:2010ti}), we get a map homotopic to $H$:
$$H'\co\cnslo\times I \rightarrow \cnro,$$
with $H'(-,0)=H(-,0)$, $H'(-,1)=H(-,1)$, and $H'$ is transverse to  $Z$.

We conclude by transversality that the mod 2 intersection numbers are equal. In addition, we conclude that in $Z$ the homology class of $C_n[i]\cap Z$ and $C_n[\eta]\cap Z$ are equal.

Haefliger's theorem requires $k> 3(l+1)/2$. As noted before, if this is not the case, use the usual embedding $\R^k \hookrightarrow\R^K$,  to achieve $K>3(l+1)/2$. Then $H':\cnslo\times I \rightarrow C_n(\R^K)$ still agrees with $H$ on $H(-,0)$ and $H(-,1)$ in the version $\R^k\oplus \overrightarrow{0}\hookrightarrow \R^K$. Since transversality does not change the intersection number mod 2, nor the homology class, the conclusions stand.
\end{proof}

\subsection{Application to configuration spaces}
\label{sect:main-method}

In the rest of the paper, we apply the results of this section to the square-peg problem using the following steps. These steps can also be used to solve other problems such as finding inscribed simplices in spheres as found in \cite{Simplices}. We have thus specialized the arguments from the previous section to the case where $M$ is a sphere $S^l$, and show any smooth embedding $\gamma$ of $S^l$ in $\R^k$ has a neighborhood in which there is a dense set of smooth embeddings $\gamma'$ for which $C_n[\gamma']$ is guaranteed to have certain intersections with various ``target'' submanifolds of $\cnr$ defined by geometric conditions. This restates the idea that a dense set of embeddings of $S^l$ always contain certain inscribed configurations of points.

\begin{description}
\item[Step 0] For any smooth embedding $\gamma:S^l\hookrightarrow \R^k$, view $C_n[\gamma(S^l)]$ as a submanifold of $\cnr$. 

\item[Step 1] Show that certain tuples of points in $\R^k$ satisfying a geometric condition, $Z$,  are a submanifold in $\cnro$, and with $\bdry Z\subset \bdry \cnr$.  Prove that $C_n[\gamma(S^l)]$ and $Z$ are boundary-disjoint.

\item[Step 2] For a standard embedding $i\co S^l\hookrightarrow \R^k$, establish the existence of a transverse intersection between $C_n[i(S^l)]$ and $Z$ inside $\cnr$ (in other words, $C_n[i]\pitchfork Z$). Compute the homology class of the intersection $C_n[i(S^l)] \cap Z$ in~$Z$.

\item[Step 3] Use our transversality theorem (Corollary~\ref{cor:transversality}) to find,  for any $\epsilon >0$,  a  $C^\infty$-open neighborhood of $\gamma$ in which there is, for all $m$, a $C^m$-dense set of smooth embeddings $\gamma'\co S^l \hookrightarrow \R^k$, such that $\|C_n[\gamma'] - C_n[\gamma]\|_0< \epsilon$, and $C_n[\gamma'] \transverse Z$, and for which $\bdry Z$ and $\bdy C_n[\gamma'(S^l)]$ are disjoint in $\bdy \cnr$. 
 
\item[Step 4] Use Haefliger's theorem (\thm{isotopy}) 
to find a smooth map $E\co S^l \cross I \rightarrow \R^K$ with $E(-,0)=i$ our standard embedding and $E(-,1) = \gamma'$ (where $K$ may be greater than our original $k$). 
 Following \thm{homology}, conclude that the intersections $C_n[i(S^l)] \cap Z$ and $C_n[\gamma'(S^l)] \cap Z$  represent the same homology class in $Z$.
\end{description}

By putting all these steps together, we prove the following theorem:
\begin{theorem} \label{thm:main-theorem}
Suppose $\gamma\co S^l\rightarrow \R^k$ is a smooth embedding of $S^l$ in $\R^k$, with a corresponding embedding of compactified configuration spaces $C_n[\gamma]\co C_n[S^l]\rightarrow C_n[\R^k]$.
Assume that $Z$ is a closed topological space contained in $\cnr$ such that $Z \cap \cnro$ is a submanifold of $\cnro$, and $\bdry Z\subset \bdry\cnr$. Also assume that $C_n[\gamma(S^l)]$ and $Z$ are boundary-disjoint.  Suppose there is a standard embedding $i\co S^l\hookrightarrow \R^k$, such that $C_n[i]\pitchfork Z$ in $\cnr$. 

Then for all $\epsilon >0$, there is a  $C^\infty$-open neighborhood of $\gamma$, in which there is, for all $m$, a $C^m$-dense set of smooth embeddings $\gamma':S^l\hookrightarrow \R^k$, such that $\| C_n[\gamma'] - C_n[\gamma]\|_0< \epsilon$, and $C_n[\gamma'] \transverse Z$, and moreover, $C_n[i(S^l)] \cap Z$ and $C_n[\gamma'(S^l)] \cap Z$ represent the same homology class in $Z$. 
\end{theorem}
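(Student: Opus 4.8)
The plan is to obtain \thm{main-theorem} by stringing together two results already established: the transversality lifting statement \cor{transversality} (the specialization of \thm{transversality-compact-config} to $M=S^l$, $N=\R^k$) and the homology-invariance statement \thm{homology}. No fresh analytic work is required; the content of the argument is matching the hypotheses of \thm{main-theorem} to those of these two theorems in the right order.

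\textbf{Step 1: produce the perturbed embeddings.} I would apply \cor{transversality} with its input embedding taken to be $\gamma$. Its hypotheses are met verbatim: $Z$ is closed in $\cnr$ with $Z\cap\cnro$ a submanifold of $\cnro$ and $\bdry Z\subset\bdry\cnr$, and the assumption of \thm{main-theorem} that $C_n[\gamma(S^l)]$ and $Z$ are boundary-disjoint is precisely the requirement that $\bdry Z$ be disjoint from $\bdry C_n[\gamma(S^l)]$. For the given $\epsilon>0$ the corollary then yields a $C^\infty$-open neighborhood of $\gamma$ containing, for every $m$, a $C^m$-dense set of smooth embeddings $\gamma'\co S^l\hookrightarrow\R^k$ such that $\|C_n[\gamma']-C_n[\gamma]\|_0<\epsilon$, such that $C_n[\gamma']\pitchfork Z$, and such that $\bdry Z$ and $\bdry C_n[\gamma'(S^l)]$ are disjoint in $\bdry\cnr$. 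This is already every conclusion of \thm{main-theorem} except the statement about homology classes.

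\textbf{Step 2: compare homology classes.} I would then apply \thm{homology} to the two embeddings $\gamma'$ and $i$. Both have configuration spaces transverse to $Z$: $C_n[\gamma']\pitchfork Z$ from Step 1, and $C_n[i]\pitchfork Z$ by hypothesis. The point that needs care is the assignment of roles: \thm{homology} asks for $\bdry Z$ to be disjoint from the boundary of the configuration space of \emph{one} of its two embeddings, and it is for $\gamma'$, rather than for the standard $i$, that Step 1 has secured this. So I take $\gamma'$ to play the role of the ``$i$'' of \thm{homology} and the standard embedding $i$ to play the role of its ``$\eta$''. (The proof of \thm{homology} internally invokes Haefliger's isotopy theorem \thm{isotopy} to connect $i$ and $\gamma'$ by a differentiable isotopy, enlarging the ambient space to some $\R^K$ with $K>3(l+1)/2$ if necessary; that is already taken care of there.) The conclusion is that $C_n[\gamma'(S^l)]\cap Z$ and $C_n[i(S^l)]\cap Z$ represent the same homology class in $Z$, which is the remaining assertion of \thm{main-theorem}.

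\textbf{What is actually hard.} Nothing in this final assembly is hard: the substantive ingredients --- the $0$-multijet transversality theorem, the continuity of $\gamma\mapsto C_n[\gamma]$ from \lem{continuous}, the boundary-avoidance device in the proof of \thm{transversality-compact-config}, and Haefliger's theorem --- are all already packaged inside \cor{transversality} and \thm{homology}. The only subtlety is the one flagged in Step 2: since \thm{main-theorem} assumes boundary-disjointness only for $\gamma$, that property must first be propagated to $\gamma'$ via \cor{transversality}, and then one must exploit the fact that \thm{homology} needs the boundary condition on only one of its two embeddings so that it can be applied with no hypothesis relating $\bdry Z$ to $\bdry C_n[i(S^l)]$.
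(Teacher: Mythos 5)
Your proposal is correct and follows essentially the same route as the paper, which obtains \thm{main-theorem} by combining \cor{transversality} (to produce $\gamma'$ with $C_n[\gamma']\pitchfork Z$ and with $\bdry Z$ disjoint from $\bdry C_n[\gamma'(S^l)]$) with Haefliger's isotopy theorem and \thm{homology} (to compare homology classes of $C_n[\gamma'(S^l)]\cap Z$ and $C_n[i(S^l)]\cap Z$). Your Step~2 remark about assigning $\gamma'$, rather than the standard embedding $i$, to the boundary-disjoint role in \thm{homology} is a careful reading of that theorem's asymmetric hypotheses that the paper leaves implicit, but it does not constitute a different argument.
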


As a simple example of this, if the standard embedding $C_n[i(S^l)]$ has nonzero intersection with $Z$, then we know that $C_n[\gamma'(S^l)]$ has nonzero intersection with $Z$ as well.

Before moving on to the square-peg problem, we note that the steps outlined above work for the more general setting of smooth embeddings of manifolds $M$ in $N\subset\R^k$ where $M$ is compact. There are many kinds of problems that could be solved using this technology.


\section{The square-peg problem}\label{sect:square-peg}

In this section we will apply the method given in Section~\ref{sect:main-method} to prove a version of the square-peg theorem.  Most of our effort will be put into the initial steps where we define the spaces involved. To do this we first give a detailed description of two submanifolds of $\cnr$. For the first, we take an embedding $\gamma\co S^1\hookrightarrow \R^k$, and consider $C_4^0[\gamma(S^1)]$ which is the submanifold of $4$-tuples on a curve $\gamma$ where the points occur in order according to the orientation of the curve. The second is the submanifold $\slq\subset\cfr$, which is the submanifold of configurations of  square-like quadrilaterals. These are  $4$-tuples of points with equal ``sides''  and equal ``diagonals'' (explained in Section~\ref{sect:slq} below). When $k=2$, these are squares, hence our setting generalizes the square-peg problem. A moment's thought shows that there is a square-like quadrilateral inscribed in $\gamma$ when $\slq$ and $C_4^0[\gamma(S^1)]$ intersect. In fact, we will show that when 
this intersection is transverse, the number of intersections is an odd multiple of 4; thus giving an odd number of inscribed squares-like-quadrilaterals up to cyclic relabeling. 

\subsection{The configuration space of points on a curve.}
\label{sect:cfg}
We now use the results from Section~\ref{sect:config} to familiarize ourselves with the configuration space of $n$ points on an embedded circle in $\R^k$. We will always assume that our embeddings are {\em regular}, that is the tangent vector is nowhere zero. (Otherwise it is possible to smoothly describe an embedded curve with corners, by allowing the tangent vector to smoothly change to zero at each corner.)

\begin{definition}
Let $\gamma$  be a $C^\infty$-smooth embedding of $S^1$ in $\R^k$, with $\cng\co\cns \hookrightarrow \cnr$  the corresponding embedding on compactified configuration spaces. We abuse notation by using~$\gamma$ to mean either the embedding or its image in~$\R^k$. We use $C_n[\gamma(S^1)]$ to mean the compactified configuration space of $n$ points on the simple closed curve $\gamma(S^1)\in \R^k$. 
\end{definition}

By \thm{functor} we know that  $C_n[\gamma(S^1)]$ is a submanifold of $\cnr$ and $\bdry C_n[\gamma(S^1)]\subseteq \bdry\cnr$ with the stratifications respected.  The coordinates for $C_n[\gamma(S^1)]$ are similar to those described in \thm{configgeom}, as they are the image of the coordinates under $\gamma\co  S^1\hookrightarrow \R^k$. I.~Voli\'c~\cite{MR2300426} and Budney {\it et al.}~\cite{newkey118} have detailed descriptions of the coordinates for codimension 1 strata. To give an example, observe that the map $C_n[\gamma]$ takes $(\bfp_1,\dots,\bfp_n)\in\cnso$  to $(\gamma(\bfp_1),\dots,\gamma(\bfp_n))\in \cnro$. If we consider the stratum where say $\bfp_1$, $\bfp_2$ and $\bfp_3$ degenerate to a point $\q$ in $\cns$, then $\q$ is a configuration of $n-3+1=n-2$ points plus the $\pij$ and $s_{ijl}$ information for $\bfp_1$, $\bfp_2$ and $\bfp_3$. In $\cnr$ we get a configuration of $n-2$ points on  $\gamma$ plus the directions of approach of the colliding~$\gamma(\bfp_i)$ and the relative distances $s_{123}$, $s_{312}$, and so forth. The $\pi_{ij}$ are unit tangent vectors to $\gamma$. If $\bfp_1$ and $\bfp_3$ approach $\bfp_2$ equally from opposite sides, then in the limit $\norm{\bfp_1 - \bfp_2} + \norm{\bfp_2 - \bfp_3} = \norm{\bfp_1 - \bfp_3}$, so the $s_{ijk}$ obey the relations
\begin{equation*}
1 + s_{231} = s_{132}, \quad s_{213} + 1 = s_{312}, \quad s_{123} + s_{321} = 1.
\end{equation*}

In $\cns$ the values of $\pij$ are in $S^1$ and are mapped to $S^1$ by $\cng$.
Thus, while the exact values of the unit tangent vectors $\pij$ and $\pi_{ji}$ are unknown for two colliding points on $\gamma$, they must differ by $\pi$. 

In the case of the circle, the cyclic ordering of points along $S^1$ determines $(n-1)!$ connected components of $C_n[S^1]$. We will focus on one of these connected components.

\begin{definition} 
\label{def:ccng}
Let $C_n^0[\gamma(S^1)]$ denote the component of $C_n[\gamma(S^1)]$ where the order of the points $\bfp_1, \dots, \bfp_n$ matches the cyclic order of these points along $\gamma$ according to the given parametrization of $\gamma$.
\end{definition}

 Note that some strata are empty in the boundary of each connected component of $C_n[S^1]$ (and hence $C_n[\gamma(S^1)]$). For instance, in the component of $\cfs$ where points $\bfp_1$, $\bfp_2$, $\bfp_3$ and $\bfp_4$ occur in order along $S^1$, if $\bfp_1$ and $\bfp_3$ come together, either $\bfp_2$ or $\bfp_4$ must collapse to the same point. Thus the stratum $(13)$ is empty on the boundary of this component. 

\subsection{The configuration space of square-like quadrilaterals.}\label{sect:slq}

In this section we show that $\slq$ (configurations of square-like quadrilaterals) is a submanifold of $\cfro$. Thus $\slq$ plays the role of ``$Z$'' in Section~\ref{sect:main-method}. We also show that the boundaries of $\slq$ and $C_4^0[\gamma(S^1)]$ are disjoint in $\cfr$.

\begin{definition} \label{def:slq}
We define $\slq$ to  be the subset of {\em square-like quadrilaterals} of $\cfr$ such that  $r_{124} = r_{231} = r_{342} = 1$ and $r_{132} - r_{241} = 0$.  
\end{definition}

In other words, if $\p=((\bfp_1,\bfp_2,\bfp_3,\bfp_4),\alpha(\p))\in \slq$, then the first condition implies $\norm{\bfp_1-\bfp_2} = \norm{\bfp_2-\bfp_3} = \norm{\bfp_3-\bfp_4} = \norm{\bfp_4-\bfp_1}$, and the second condition implies $\norm{\bfp_1-\bfp_3} = \norm{\bfp_2-\bfp_4}$. 
Thus when $k>2$, $\slq$ is  the space of quadrilaterals in $\R^k$ with equal sides and equal diagonals. When $k=2$, $\slq$ is the space of squares in $\R^2$. Note that we work with $r_{ijl}$ here as we are considering the actual ratios of lengths, rather than the rescaled ratios $s_{ijl}$ (see \defn{pijsijk}). 

\begin{proposition}
\label{prop:slq is submanifold of r2}
The space $\slq \cap \cfro$ is an orientable submanifold of $\cfro$.
\end{proposition}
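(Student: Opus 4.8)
The plan is to exhibit $\slq \cap \cfro$ as the regular-value preimage of a smooth map, so that the implicit function theorem gives the submanifold structure. Concretely, define $F \co \cfro \to \R^4$ by
$$F(\p) = \bigl( r_{124}(\bfp) - 1,\ r_{231}(\bfp) - 1,\ r_{342}(\bfp) - 1,\ r_{132}(\bfp) - r_{241}(\bfp) \bigr),$$
where each $r_{ijl}$ is the genuine distance ratio $\norm{\bfp_i - \bfp_j}/\norm{\bfp_i-\bfp_l}$. On the open manifold $\cfro$ all four points are distinct, so every $r_{ijl}$ appearing here is smooth (no denominator vanishes); hence $F$ is smooth, and $\slq \cap \cfro = F^{-1}(0)$. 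It then suffices to show that $0$ is a regular value of $F$, i.e. that $DF_\p$ is surjective at every $\p \in F^{-1}(0)$; the dimension of the submanifold is then $\dim \cfro - 4 = 4k - 4$.

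First I would clear denominators to make the computation cleaner: at a point of $F^{-1}(0)$ the conditions are equivalent to the vanishing of
$$g_1 = \norm{\bfp_1 - \bfp_2}^2 - \norm{\bfp_2 - \bfp_3}^2,\quad
g_2 = \norm{\bfp_2 - \bfp_3}^2 - \norm{\bfp_3 - \bfp_4}^2,\quad
g_3 = \norm{\bfp_3 - \bfp_4}^2 - \norm{\bfp_4 - \bfp_1}^2,$$
$$g_4 = \norm{\bfp_1 - \bfp_3}^2 - \norm{\bfp_2 - \bfp_4}^2,$$
and on the zero set the differentials of the $g_i$ and of the $F$-components differ by an invertible diagonal rescaling, so it is enough to check that $Dg = (Dg_1, Dg_2, Dg_3, Dg_4)$ has rank $4$. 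The partial derivatives are explicit: $\partial g_1 = 2\bigl( (\bfp_1-\bfp_2)\,d\bfp_1 + (\bfp_3 - \bfp_1)\, d\bfp_2 + (\bfp_2 - \bfp_3)\, d\bfp_3 \bigr)$, and similarly for the others, each involving only three of the four point-variables. To show rank $4$ I would produce, for each $i$, a tangent direction (a choice of $(\dot\bfp_1,\dots,\dot\bfp_4) \in (\R^k)^4$, which is legitimate since $\cfro$ is open in $(\R^k)^4$) on which $g_i$ changes but — after suitable bookkeeping — the $4\times 4$ matrix of pairings is nonsingular; the cleanest route is to move a single vertex at a time along well-chosen directions. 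For instance moving $\bfp_1$ in the direction $\bfp_1 - \bfp_2$ changes $\norm{\bfp_1-\bfp_2}^2$ but one must also track its effect on $\norm{\bfp_4 - \bfp_1}^2$ and $\norm{\bfp_1 - \bfp_3}^2$; a short linear-algebra argument (or an appeal to the fact that the gradients of the four edge-lengths-squared and the two diagonals-squared, as functions on the space of genuinely non-degenerate quadrilaterals, are linearly independent) finishes it.

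The main obstacle is precisely this rank computation: one must rule out the existence of a degenerate configuration in $F^{-1}(0)$ at which the four gradients become dependent. The worry is a ``collinear'' or otherwise flat quadrilateral where several of the displacement vectors $\bfp_i - \bfp_j$ line up. However, on $\cfro$ the points are distinct, and a genuinely degenerate coincidence of gradients would force strong linear relations among $\bfp_1 - \bfp_2,\ \bfp_2-\bfp_3,\ \bfp_3-\bfp_4,\ \bfp_4-\bfp_1$ (whose sum is already zero) together with the diagonal vectors; I expect that combining such relations with the metric equalities $g_i = 0$ forces two of the points to coincide, contradicting $\p \in \cfro$. So the argument structure is: write down $Dg$, assume a nonzero covector annihilates its image, derive vector identities, and use the equal-sides/equal-diagonals relations to reach a contradiction with distinctness. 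Orientability then follows because $\slq \cap \cfro = F^{-1}(0)$ with trivial (hence orientable) normal bundle $\p \mapsto \R^4$ inside the orientable manifold $\cfro$, so the submanifold inherits an orientation.
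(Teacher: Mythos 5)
Your overall strategy matches the paper's: realize $\slq \cap \cfro$ as the level set of a smooth map into $\R^4$, show the differential is surjective at every point of the level set, and deduce orientability from the trivial normal bundle inside the orientable $\cfro$. The paper uses $g(\p) = (r_{124}^2, r_{231}^2, r_{342}^2, r_{132}^2 - r_{241}^2)$ and $\slq = g^{-1}(1,1,1,0)$, which is a cosmetic variant of your $F$; and your cleared-denominator system $(g_1,\dots,g_4)$ has the same row span as either, so that reduction is sound. (A minor imprecision: you assert the differentials of your $g_i$ and of the $F$-components ``differ by an invertible diagonal rescaling.'' They do not — $F_1$ compares edges $12$ and $14$, whereas your $g_1$ compares edges $12$ and $23$ — but the two $3\times 4$ blocks nonetheless span the same space of functionals on the edge-lengths, so rank~$4$ for one is equivalent to rank~$4$ for the other. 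The claim as stated is wrong; the conclusion you draw from it happens to hold.)

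The genuine gap is the rank computation, which you identify as ``the main obstacle'' and then do not carry out. Worse, the contradiction strategy you sketch — ``a nonzero covector annihilating $Dg$ forces two points to coincide'' — will not work as stated. The dangerous configurations are not those with coincident points (already excluded on $\cfro$); they are the \emph{planar} square-like quadrilaterals, which are genuine points of $\slq \cap \cfro$ with all four $\bfp_i$ distinct. At such a point the vectors $\bfp_1 - \bfp_2$, $\bfp_1 - \bfp_3$, $\bfp_1 - \bfp_4$ all lie in a $2$-plane and are therefore linearly dependent, so the ``per-vertex'' vector equations you would extract from a putative annihilating covector $(\lambda_1,\dots,\lambda_4)$ admit nontrivial-looking relations that do not immediately force $\lambda = 0$. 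One must instead exhibit specific tangent directions adapted to this degeneracy. The paper does exactly this, and the argument splits into two cases: for planar squares it moves each vertex \emph{tangentially along the circumscribing ellipse} (a motion that simultaneously trades off two adjacent side lengths), and for nonplanar quadrilaterals it moves individual vertices along directions perpendicular to the planes spanned by three of the four points (so that the directional derivative of exactly one target length is nonzero). In each case one then checks by explicit computation that the resulting $4 \times 4$ matrix is nonsingular. Without producing these directions and performing that check, the surjectivity claim — and hence the proposition — is unproved.
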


\begin{proof}
Let $\p=((\bfp_1,\bfp_2,\bfp_3,\bfp_4),\alpha(\p))$ be a point  in $\cfr$, and consider the mapping $g\co \cfr \to \R^4$ given by
\begin{align}
\label{slq-eq} g(\p) &= (r_{124}^2, r_{231}^2, r_{342}^2, r_{132}^2 - r_{241}^2) \\ 
&= \left( \frac{\norm{\bfp_1-\bfp_2}^2}{\norm{\bfp_1 - \bfp_4}^2},\, \frac{\norm{\bfp_2-\bfp_3}^2}{\norm{\bfp_1 - \bfp_2}^2},\, \frac{\norm{\bfp_3-\bfp_4}^2}{\norm{\bfp_2 - \bfp_3}^2},\,
\frac{\norm{\bfp_1-\bfp_3}^2}{\norm{\bfp_1 - \bfp_2}^2} - \frac{\norm{\bfp_2-\bfp_4}^2}{\norm{\bfp_2 - \bfp_1}^2} \right).
\end{align}
This mapping is smooth and $\slq$ is the preimage of the point $(1,1,1,0)$. In this proof, we show
that 
$$Dg:T_{\p}\cfro\rightarrow T_{g(\p)}\R^4$$ 
is onto at points $\p\in\slq$ by showing $Dg$ has four linearly independent rows.   It then follows from the Preimage Theorem of \cite{Guillemin:2010ti} that ${g} \transverse (1,1,1,0)$ and the interior of $\slq$ is an orientable submanifold of $C_4(\R^k)$.

In order to do this, we create a basis of tangent vectors to $\cfro$ that allows $Dg$ to be computed easily. There are two cases; when $\p\in \slq$ is planar and when it is not. 
Note that we will use these bases in later proofs, so we give full details here.
We denote a tangent vector at $\p$ by $\h = \bfh(\p)=(\bfv_1,\bfv_2,\bfv_3,\bfv_4)$, where each $\bfv_i$ is a tangent vector at $\bfp_i$. (Here we suppress the $\alpha(\bfp)$ information on the strata.)  

For each of the two cases, we need to know the image of $Dg$ with respect to vectors $\h$. We compute the derivative of each  of the four equations of $g$ with respect to a vector $\h$. The details for the computation for $D_{\h}(r_{124}^2)$  is found below, where we have simplified using $\norm{\bfp_1-\bfp_2} = \norm{\bfp_2-\bfp_3} = \norm{\bfp_3-\bfp_4} = \norm{\bfp_4-\bfp_1}$ and $\norm{\bfp_1-\bfp_3}=\norm{\bfp_2-\bfp_4}$.
Thus the first row of $Dg$ contains

\begin{align*}D_{\h}(r_{124}^2) &= D_{\h}\left(\frac{\norm{\bfp_1-\bfp_2}^2}{\norm{\bfp_1-\bfp_4}^2}\right)
\\ &=
\frac{\norm{\bfp_1-\bfp_4}^22\norm{\bfp_1-\bfp_2}D_{\h}(\norm{\bfp_1-\bfp_2})- \norm{\bfp_1-\bfp_2}^22\norm{\bfp_1-\bfp_4}D_{\h}(\norm{\bfp_1-\bfp_4}) }{\norm{\bfp_1-\bfp_4}^4}
\\ & = \frac{2}{\norm{\bfp_1-\bfp_4}}\left(D_{\h}(\norm{\bfp_1-\bfp_2}) - D_{\h}(\norm{\bfp_1-\bfp_4})\right).
\end{align*}
Using similar reasoning, we compute the column of $Dg$ corresponding to a vector $\h$ to be

\begin{equation}\label{eqn:Dg}
\begin{pmatrix}
\frac{2}{\norm{\bfp_1-\bfp_4}}\left(D_{\h}(\norm{\bfp_1-\bfp_2}) - D_{\h}(\norm{\bfp_1-\bfp_4})\right)
\\ 
\frac{2}{\norm{\bfp_2-\bfp_1}}\left(D_{\h}(\norm{\bfp_2-\bfp_3}) -D_{\h} (\norm{\bfp_2-\bfp_1})\right)
\\
 \frac{2}{\norm{\bfp_3-\bfp_2}}\left(D_{\h}(\norm{\bfp_3-\bfp_4}) - D_{\h}(\norm{\bfp_3-\bfp_2})\right)
 \\ \frac{2}{\norm{\bfp_1-\bfp_2}^2} \left(\norm{\bfp_1-\bfp_3}D_{\h}(\norm{\bfp_1-\bfp_3}) - \norm{\bfp_2-\bfp_4}D_{\h}(\norm{\bfp_2-\bfp_4}) \right)
\end{pmatrix}
\end{equation}

{\bf Case 1:} \ We first assume the configuration $\p\in \slq$ is planar, then $\bfp_1\bfp_2\bfp_3\bfp_4$ are the four vertices of a square. This means the points are inscribed in an ellipse, and without loss of generality, we choose an ellipse with equation $ \nicefrac{x^2}{a^2} + \nicefrac{y^2}{b^2} = 1$ and $a > b$. (Note that for any choice of ratio $a:b$ there will be a square for precisely one pair of numbers satisfying that ratio. These numbers are determined by the sidelength of the square.)  If we parametrize the ellipse by $(x(\theta),y(\theta)) = (a \cos \theta, b \sin\theta)$, then we can show that $\cos^2 \theta = b^2/(a^2 + b^2)$ and $\sin^2\theta = a^2/(a^2 + b^2)$. 

\begin{figure}[h]
\hfill
\begin{overpic}[height=1.5in]{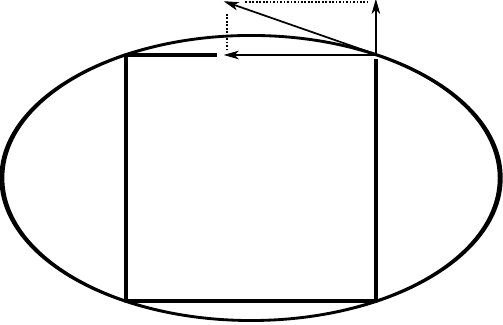}
\put(76,56){$\bfp_1$}
\put(20,56){$\bfp_2$}
\put(20,1){$\bfp_3$}
\put(76,1){$\bfp_4$}
\put(60, 60.5){$\bfv_1$}
\end{overpic}
\hfill
\hphantom{.}
\caption{
This figure shows the effect of moving only $\bfp_1 = (a \cos \theta_1,b \sin \theta_1)$ along the ellipse on the sides of the quadrilateral. This motion increases $\norm{\bfp_4 - \bfp_1}$ while decreasing $\norm{\bfp_1 - \bfp_2}$.
\label{fig:ellipsetrans}}
\end{figure}

For our first tangent vector to $\cfro$, we consider the effect of moving $\bfp_1$ along the ellipse as shown in Figure~\ref{fig:ellipsetrans}. For some $\theta_1$, this point has coordinates $(a \cos \theta_1,b \sin \theta_1)=(\nicefrac{ab}{\sqrt{a^2+b^2}}, \nicefrac{ab}{\sqrt{a^2+b^2}})$.  Our first  vector is the tangent vector to the ellipse $\h_1=(\bfv_1,\bfzero,\bfzero,\bfzero)$, where $\bfv_1=(\nicefrac{-a^2}{\sqrt{a^2 + b^2}},\nicefrac{b^2}{\sqrt{a^2 + b^2}})$. Using symmetry we see the square has sidelength $\nicefrac{2ab}{\sqrt{a^2+b^2}}$ and for this vector $D_{\h_1}(\norm{\bfp_1-\bfp_2})=-\nicefrac{a^2}{a^2+b^2}$, $D_{\h_1}(\norm{\bfp_1-\bfp_4})=\nicefrac{b^2}{a^2+b^2}$ and  $D_{\h_1}(\norm{\bfp_1-\bfp_3})=\nicefrac{-a^2+b^2}{\sqrt{2(a^2+b^2)}}$ . Substituting these values in the first row of Equation~\ref{eqn:Dg}, we find
$$D_{\h_1}(r_{124}^2) = \frac{\sqrt{a^2+b^2}}{ab}\left( -\frac{a^2}{a^2+b^2}- \frac{b^2}{a^2+b^2}\right) = -\frac{a}{b} - \frac{b}{a}.
$$

We then construct vectors $\h_2$, $\h_3$ and $\h_4$ at the remaining $\bfp_i$ in an analogous fashion. (We note these vectors are linearly independent, since each of them only acts at one vertex of the square.) Using {\em Mathematica} we compute $Dg$ restricted to $\Span\{\h_1, \h_2, \h_3, \h_4\}$ to be the matrix
$$
\begin{pmatrix} 
-\frac{a}{b} - \frac{b}{a} & \frac{a}{b} & 0 & \frac{b}{a} \\
 \frac{a}{b}  & -\frac{a}{b} - \frac{b}{a} & \frac{b}{a} & 0 \\
0 & \frac{b}{a}  & -\frac{a}{b} - \frac{b}{a} & \frac{a}{b} \\
-\frac{a}{b} + \frac{b}{a} &-\frac{a}{b} + \frac{b}{a} & -\frac{a}{b} + \frac{b}{a}  & -\frac{a}{b} + \frac{b}{a}
\end{pmatrix}.
$$
This matrix has determinant $\displaystyle \frac{8(a^4-b^4)}{a^2b^2}$ which is positive, since we assumed $a> b$. This also rechecks our previous observation that $Dg$ has four linearly independent rows.

{\bf Case 2:}  We next make a similar construction for nonplanar configurations in $\slq$. Assume the square-like quadrilateral $\p\in\slq$ has sides of length $\ell=\norm{\bfp_1-\bfp_2}=\norm{\bfp_3-\bfp_2} = \norm{\bfp_4-\bfp_3}= \norm{\bfp_1-\bfp_4}$, and diagonals have length $m= \norm{\bfp_1-\bfp_3}= \norm{\bfp_2-\bfp_4}$. 

Figure~\ref{fig:slqmotions2} shows the construction of two types of tangent vectors to $\cfro$ at $\p$. The first three tangent vectors are of the form $\h_1 = (\bfv_1,\bfzero,\bfzero,\bfzero)$, $\h_2 = (\bfzero,\bfzero,\bfv_3,\bfzero)$ and $\h_3 = (\bfzero,\bfzero,\bfzero,\bfv_4)$. Vector $\h_2$ is shown on the left of Figure~\ref{fig:slqmotions2}, and observe that $\bfv_3$ is the vector at $\bfp_3$ perpendicular to the plane through $\bfp_1\bfp_3\bfp_4$. This means the directional derivatives of $\norm{\bfp_3-\bfp_4}$ and $\norm{\bfp_3-\bfp_1}$ in the direction $\h_2$ are zero (to first order). On the other hand, since the quadrilateral  is nonplanar, edge $\bfp_2\bfp_3$ is not in the plane normal to $\bfv_3$, so the directional derivative of $\norm{\bfp_2-\bfp_3}$ is nonzero.

\begin{figure}
\hfill
\begin{overpic}[height=1.5in]{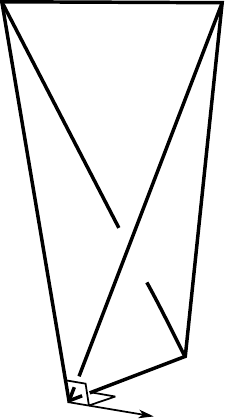}
\put(-4,103){$\bfp_4$}
\put(47,10){$\bfp_1$}
\put(55,102){$\bfp_2$}
\put(12,-3){$\bfp_3$}
\put(25, -5){$\bfv_3$}
\end{overpic}
\hfill
\begin{overpic}[height=1.5in]{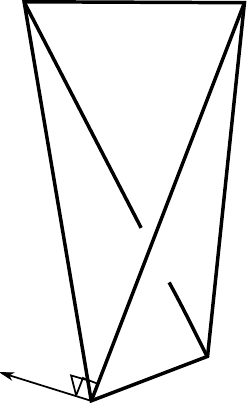}
\put(0,103){$\bfp_4$}
\put(52,5.5){$\bfp_1$}
\put(62,102){$\bfp_2$}
\put(14,-5){$\bfp_3$}
\put(0,-2){$\bfw$}
\end{overpic}
\hfill
\hphantom{.}
\caption{Two tangent vectors to a configuration $\p\in\slq$ which forms a nonplanar quadrilateral. On the left, we see the tangent vector where the directional derivative of $\norm{\bfp_2-\bfp_3}$ is positive, while the directional derivatives of all other lengths vanish. A similar tangent vector may be constructed at vertices $\bfp_1$ and $\bfp_4$. On the right, we see the tangent vector  where the directional derivative of $ \norm{\bfp_1 - \bfp_3}$ is positive while the directional derivatives of all other lengths vanish.
}
\label{fig:slqmotions2}
\end{figure}

We define vectors $\h_1$ and $\h_3$ in  a similar way. Vector $\bfv_1$ is perpendicular to the $\bfp_1\bfp_2\bfp_3$ plane at $\bfp_1$, and vector $\bfv_4$ is perpendicular to the $\bfp_1\bfp_2\bfp_4$ plane at $\bfp_4$. To summarize, we choose $\bfv_1, \bfv_3, \bfv_4$ to be scaled so that
\begin{align*}
D_{\h_1} \norm{\bfp_1 - \bfp_4} &= -\ell/2, \quad \text{other directional deriv.'s of lengths } = 0, \\
D_{\h_2} \norm{\bfp_2 - \bfp_3} &= \ell/2, \quad \text{other directional deriv.'s of lengths } = 0, \\
D_{\h_3} \norm{\bfp_3 - \bfp_4} &= \ell/2, \quad \text{other directional deriv.'s of lengths } = 0.
\end{align*}
The fourth tangent vector $\h_4=(\bfzero,\bfzero,\bfw,\bfzero)$ is shown at the right of Figure~\ref{fig:slqmotions2}. Vector $\bfw$ is perpendicular to the plane through $\bfp_2\bfp_3\bfp_4$ at $\bfp_3$. We can choose $\bfw$ to be scaled so that
$$
D_{\h_4} \norm{\bfp_1 - \bfp_3} = \ell^2/2m, \quad \text{other directional deriv.'s of lengths } = 0. 
$$
Substituting these values in Equation~\ref{eqn:Dg}, we see that various factors of $\ell$ and $m$ cancel, and $Dg$ restricted to $\Span\{\h_1, \h_2, \h_3, \h_4\}$ is the matrix:
\begin{equation*}
Dg = \begin{pmatrix}
1  & 0  & 0 & 0 \\
0  & 1  & 0 & 0 \\
0  & -1 & 1 & 0 \\
0  & 0  & 0 & 1 \\
\end{pmatrix}.
\end{equation*}
This matrix has determinant 1, and again we see that $Dg$ has four linearly independent rows. 

In both cases ($\slq$ planar and non-planar), we have shown that $Dg$ has four linearly independent rows and is onto. This means that $g\pitchfork (1,1,1,0)$, and the interior of $\slq$ is an orientable submanifold of $\cfro$.
\end{proof} 

We now discuss the behavior of the boundaries of $\slq$ and $ C_4^0[\gamma(S^1)]$. Recall that, a point lies in $\bdry C_4[\R^k]$ when points of a configuration come together, along with the directions of collision and ratios of the sides. 

\begin{lemma} \label{lem:boundry-disjoint} The spaces $\slq$ and $C_4^0[\gamma(S^1)]$ are boundary-disjoint in $\cfr$.
\end{lemma}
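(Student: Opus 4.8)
The claim is that no point of $\bdry\slq$ lies in $\bdry C_4^0[\gamma(S^1)]$. Since both are subsets of $\bdry\cfr$, every such point corresponds to a parenthesization $\mathcal P$ of $\{1,2,3,4\}$ in which some of the four points collide. The strategy is to run through all parenthesizations $\mathcal P$ and show: either $S_{\mathcal P}$ meets $\bdry C_4^0[\gamma(S^1)]$ only for certain collision patterns (and those are incompatible with the side/diagonal ratio conditions defining $\slq$), or $S_{\mathcal P}$ contains no points of $\slq$ at all. The two geometric inputs I will use are (i) the description in Section~\ref{sect:cfg} of which strata are nonempty on the boundary of $C_4^0[\gamma(S^1)]$ — in particular the observation there that the stratum $(13)$, and by the cyclic symmetry the stratum $(24)$, are empty on the boundary of this component — and (ii) the fact that on $\slq$ the ratios $r_{124}=r_{231}=r_{342}=1$ and $r_{132}=r_{241}$ extend continuously to the boundary by \thm{configgeom}(2), so they constrain the limiting ratios of distances of any colliding cluster.

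\textbf{Key steps, in order.}
First I set up the case division by the coarsest collision pattern, i.e. by the partition of $\{1,2,3,4\}$ into maximal blocks $A_s$. There are essentially three kinds of nontrivial patterns up to the cyclic relabeling that preserves $C_4^0[\gamma(S^1)]$: (a) exactly two adjacent points collide, e.g. $\{1,2\}$; (b) two disjoint adjacent pairs collide, $\{1,2\}$ and $\{3,4\}$; (c) three or more points collide, e.g. $\{1,2,3\}$ or all four. Non-adjacent pairs such as $\{1,3\}$ alone cannot occur on $\bdry C_4^0[\gamma(S^1)]$ by input (i), so those strata are automatically boundary-disjoint from the curve side and need no check against $\slq$. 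Second, for pattern (a), $\{1,2\}$ colliding: on $\slq$ one has $\norm{\bfp_1-\bfp_2}=\norm{\bfp_2-\bfp_3}=\norm{\bfp_3-\bfp_4}=\norm{\bfp_4-\bfp_1}$, so if $\norm{\bfp_1-\bfp_2}\to 0$ then all four side lengths go to $0$, forcing $\bfp_3$ and $\bfp_4$ to collide with $\bfp_1=\bfp_2$ as well; hence this is not really a codimension-$1$ stratum of $\slq$ but lies in a deeper stratum — so the only boundary points of $\slq$ with any collision are total collisions or at least triple collisions. Third, for total collision (all four points equal, pattern (c)): here I use that the limiting ratios $r_{124}=1$, $r_{231}=1$, $r_{342}=1$, $r_{132}=r_{241}$ must still hold among the rescaled directions/ratios recorded in the stratum. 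A point of $\bdry C_4^0[\gamma(S^1)]$ at which all four points collapse to a single point $\bfp$ of $\gamma$ records four unit vectors $\pi_{ij}\in T_\bfp\gamma(S^1)$ — a $1$-dimensional space — together with the cyclic order along the curve; so, up to sign, the directions $\bfp_i-\bfp_j$ all lie on a single line, and the four ``points'' are, in the blow-up, four collinear points in cyclic order $1,2,3,4$ on a line. But four collinear points cannot satisfy $\norm{\bfp_1-\bfp_2}=\norm{\bfp_2-\bfp_3}=\norm{\bfp_3-\bfp_4}=\norm{\bfp_1-\bfp_4}$ — the last equals $\norm{\bfp_1-\bfp_2}+\norm{\bfp_2-\bfp_3}+\norm{\bfp_3-\bfp_4}$ on a line in this order — so no such point lies in $\slq$. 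Fourth, for a triple collision, say $\{1,2,3\}$ with $\bfp_4$ separate (pattern (c)): again the three colliding points, blown up, are collinear in $T_\bfp\gamma$ in cyclic order $1,2,3$, so $\norm{\bfp_1-\bfp_2}+\norm{\bfp_2-\bfp_3}=\norm{\bfp_1-\bfp_3}$ among limiting lengths, i.e. the limiting ratio $r_{132}=(\norm{\bfp_1-\bfp_2}+\norm{\bfp_2-\bfp_3})/\norm{\bfp_1-\bfp_2}$, while at the same time $r_{231}=1$ forces $\norm{\bfp_2-\bfp_3}=\norm{\bfp_1-\bfp_2}$ in the limit, giving $r_{132}=2$; but the $\slq$ condition needs $r_{132}=r_{241}$, and $r_{241}=\norm{\bfp_2-\bfp_4}/\norm{\bfp_1-\bfp_4}$ with $\bfp_4$ at finite distance while $\bfp_1,\bfp_2,\bfp_3$ coincide gives $r_{241}=1\neq 2$, a contradiction — so no triple-collision point lies in $\slq$ either. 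Fifth, pattern (b) with $\{1,2\}$ and $\{3,4\}$: as in step two, $\norm{\bfp_1-\bfp_2}\to 0$ on $\slq$ forces all side lengths to $0$, contradicting $\bfp_2\ne\bfp_3$, so this stratum contains no $\slq$ point. Assembling these cases: every boundary point of $\slq$ involves a total collision, and every total-collision point of $\bdry C_4^0[\gamma(S^1)]$ fails the $\slq$ ratio conditions; hence $\bdry\slq\cap\bdry C_4^0[\gamma(S^1)]=\emptyset$.

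\textbf{Main obstacle.}
The delicate point is step three/four: controlling the \emph{limiting} values of the ratios $r_{ijl}$ on a stratum where several points of the curve collide, and extracting the ``four collinear points in the blow-up'' picture rigorously from \thm{configgeom} — one must be careful that the recorded directions $\pi_{ij}$ at a collision of points on $\gamma$ genuinely lie in the tangent line $T_\bfp\gamma$ (this is exactly \thm{configgeom}(1)) and that the recorded $s_{ijl}$ (equivalently $r_{ijl}$) are the honest limiting ratios of distances along the curve (\thm{configgeom}(2)), so that the first-order Taylor expansion of $\gamma$ near the collision point forces the ``add up along the line'' relations among limiting lengths. Once that local picture is pinned down, the contradiction with the equal-sides/equal-diagonals conditions is immediate, because a configuration of collinear points in a fixed cyclic order simply cannot have all four ``sides'' equal. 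I expect the write-up to consist mainly of carefully enumerating the parenthesizations of $\{1,2,3,4\}$ that survive on $\bdry C_4^0[\gamma(S^1)]$ (using the emptiness of the $(13)$ and $(24)$ strata noted in Section~\ref{sect:cfg}) and then dispatching each with the collinearity-plus-ratio argument above.
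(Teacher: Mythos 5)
Your overall strategy matches the paper's: classify boundary strata of $\cfr$ by collision pattern, dismiss strata with non-adjacent pairs colliding (such as $(13)$, $(24)$, $(13)(24)$) from the $C_4^0[\gamma(S^1)]$ side using the observation from Section~\ref{sect:cfg}, and on the $(1234)$ face observe that a total collision on a smooth curve is, after blow-up, a collinear configuration in cyclic order, which is incompatible with the $\slq$ conditions. Your concrete contradiction on $(1234)$ — that collinearity in cyclic order forces $\norm{\bfp_1-\bfp_4}=\norm{\bfp_1-\bfp_2}+\norm{\bfp_2-\bfp_3}+\norm{\bfp_3-\bfp_4}$, which cannot equal $\norm{\bfp_1-\bfp_2}$ — is a clean metric reformulation of the paper's argument that the limiting internal angles are $0$ or $\pi$ whereas $\slq$ forces them all to be equal; the two are essentially interchangeable.

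However, one of your intermediate claims is wrong, and it is used in your ``assembling'' sentence: you assert that every boundary point of $\slq$ involves a total collision. This misses the $(13)(24)$ face, which is genuinely part of $\bdry\slq$: there the diagonals vanish while the four sidelengths stay equal and nonzero (the paper explicitly identifies this as one of the two pieces of $\bdry\slq$). Your step~2 argument — that a vanishing adjacent side on $\slq$ forces all sides to vanish — correctly rules out \emph{adjacent} pair collisions but says nothing about the simultaneous diagonal collision $\{1,3\},\{2,4\}$. Your proof is rescued only because you separately dismiss non-adjacent collision patterns from the curve side, but the assembly as phrased is logically unsound; the correct structure is: $\bdry\slq$ is contained in the $(1234)$ and $(13)(24)$ faces, $\bdry C_4^0[\gamma(S^1)]$ does not meet the $(13)(24)$ face, and on $(1234)$ the two sets are disjoint by the collinearity argument. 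As a smaller remark, your step~4 on triple collisions is redundant once step~2 is in place, and the ratio contradiction there can be obtained more directly from $r_{124}=1$ alone, since $\norm{\bfp_1-\bfp_2}\to 0$ while $\norm{\bfp_1-\bfp_4}$ stays bounded away from $0$.
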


\begin{proof}

The boundary of $\slq$ can occur in two ways: when both the sidelengths and diagonals vanish giving an ``infinitesimal'' square-like quadrilateral, and when the diagonals vanish, but the sidelengths remain equal and nonzero. That is, $\bdry \slq$ lies in the boundary faces $(1234)$ and $(13)(24)$ respectively of $\bdry\cfr$. We observe that when $k=2$, $\slq$ has two connected components: one is the interior of $\slq$ and the $(1234)$ boundary face, the other is the $(13)(24)$ boundary face. (The latter is disjoint from the interior of $\slq$: in the plane, there is no way for the diagonals to vanish without the sidelengths vanishing too.) When $k>2$, $\slq$ is connected.

As we observed in at the end of Section~\ref{sect:cfg},  $C_4^0[\gamma(S^1)]$ does not contact the $(13)(24)$ or $((13)(24))$ faces of $\cfr$. Thus we need only consider the portion of $\slq$ and $C_4^0[\gamma(S^1)]$ on the interior of the $(1234)$ face. For $\slq$, these configurations are infinitesimal tetrahedra with equal sides and equal diagonals. Thus each configuration $\bfp_1\bfp_2\bfp_3\bfp_4$ contains four congruent triangles, each of which has one diagonal and two edges. This means the four internal angles $\angle \bfp_{i-1}\bfp_{i}\bfp_{i+1} \pmod 4$ are all equal. (The remaining eight angles in the four triangles are also equal to one another.) For $C_4^0[\gamma(S^1)]$, since $\gamma$ is smooth, these configurations are an infinitesimal collinear quadrilateral with internal angles of either $0$ or $\pi$. Thus the $\pi_{ij}$ and $s_{ijk}$ data are completely different for $\bdy\slq$ and $\bdry C_4^0[\gamma(S^1)]$, and the spaces are are boundary disjoint.
\end{proof}

In Appendix~\ref{sect:bdry-slq}, we give further details about the structure of the faces of $\bdry\slq$. There, in  Proposition~\ref{prop:slq-boundary}, we show that each of the boundary $(1234)$ and $(13)(24)$ faces of $\slq$ is a submanifold of $\cfr$.  The proofs of these results are similar in flavor to the proof of \prop{slq is submanifold of r2}, so are omitted here. Note that we will refer to Proposition~\ref{prop:slq-boundary}  later in the proof of Proposition~\ref{prop:action}. 

\subsection{Cyclic actions}\label{sect:cyclic}
The next step on our method is to consider a base case. For us this will be the ellipse. Step~1 of our method requires us
to compute the homology class in $H_0(\slq,\Z)$ of the intersection of $C_4^0[\gamma(S^1)]$ and $\slq$ for a transverse intersection. Unfortunately for us, while $\slq \cap C_4^0[\gamma(S^1)]$ is indeed $0$-dimensional, the intersection represents $0$ in the homology $H_0(\slq;\Z) = \Z$. The essential problem is that a square-like quadrilateral can be cyclically relabeled in four ways, and it turns out that these relabelings alternate signs in $H_0(\slq;\Z)$. Indeed H.B.~Griffiths~\cite{MR1095236} took a similar approach, though he seems to have failed to appreciate the orientation-reversing nature of the cyclic permutation on $\cfr$. As a result, he (wrongly) computes a different intersection number to be 16 instead of zero. Fortunately, we can fix the problem by identifying these relabelings as a single configuration.  

\begin{definition} \label{def:cyclic}
Let $\mu \co \cfr \rightarrow \cfr$ be the map corresponding to the generator of $\Z/4\Z$ for the action on $\cfr$ that cyclically permutes $\bfp_1$, $\bfp_2$, $\bfp_3$ and $\bfp_4$, namely
$$ \mu(\bfp_1, \bfp_2,\bfp_3,\bfp_4)=(\bfp_2, \bfp_3,\bfp_4, \bfp_1).
$$ 
It is clear from the definition of $\slq$ that $\mu$ descends to a map from $\slq$ to $\slq$.
\end{definition}

This action is free\footnote{The action is also automatically properly discontinuous as the group is finite.}. To see this, simply repeat the arguments Sinha  uses for the symmetric group (cf. Theorem~4.10 of~\cite{newkey119}).

\begin{lemma}\label{lem:cfr-orient}
The map $\mu$ reverses orientation on $\cfr$ if $k$ is odd, and preserves orientation if $k$ is even.
\label{lem:orientation}
\end{lemma}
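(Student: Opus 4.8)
The plan is to reduce everything to a single determinant computation on the open stratum. First I would note that, by the argument of Theorem~4.10 of \cite{newkey119} applied to the cyclic subgroup (as indicated in the paragraph preceding this lemma), $\mu$ is a smooth diffeomorphism of the manifold-with-corners $\cfr$ that permutes the strata. The space $\cfr$ carries the orientation induced by the standard orientation of its dense interior $\cfro$, which sits inside $(\R^k)^4 = \R^{4k}$; since an orientation of a manifold-with-corners is the same thing as an orientation of its interior, $\mu$ reverses orientation on $\cfr$ if and only if $D(\mu|_{\cfro})$ has negative determinant.

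Next I would observe that $\mu|_{\cfro}$ is simply the restriction to the open set $\cfro$ of the linear automorphism $L\co(\R^k)^4\to(\R^k)^4$ defined by $L(\bfx_1,\bfx_2,\bfx_3,\bfx_4)=(\bfx_2,\bfx_3,\bfx_4,\bfx_1)$. Hence $D\mu$ is the constant matrix $L$ on all of $\cfro$, and it remains to compute $\det L$.

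Finally, writing $L=P\otimes I_k$ with $P$ the $4\times 4$ permutation matrix of the $4$-cycle $(1\,2\,3\,4)$, one has $\det L=(\det P)^k=\operatorname{sgn}(1\,2\,3\,4)^k=(-1)^k$; equivalently, $L$ is a composite of three ``block transpositions'' $\bfx_i\leftrightarrow\bfx_{i+1}$, each swapping $k$ pairs of coordinates and so having determinant $(-1)^k$. Thus $\det L=-1$ for $k$ odd and $\det L=+1$ for $k$ even, which is the claim. The only step needing any care is the first one --- confirming that orientation behaviour on the compactified space is genuinely governed by the open stratum --- together with the arithmetic point that a $4$-cycle is an \emph{odd} permutation, so the sign is $(-1)^k$ and not $1$ (precisely the subtlety that, as noted above, tripped up Griffiths).
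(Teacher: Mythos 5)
Your proof is correct and takes essentially the same approach as the paper: both reduce to computing the sign of the cyclic permutation of the $4k$ coordinates of $(\R^k)^4$, the paper by counting $3k^2$ adjacent transpositions and you via $\det(P\otimes I_k)=(\det P)^k=(-1)^k$ (or equivalently three block transpositions of sign $(-1)^k$ each), which give the same answer. Your extra care in passing from the manifold-with-corners $\cfr$ to its interior $\cfro$ is sound and a mild improvement in rigor, but the underlying argument is the same.
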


\begin{proof} Observe that the tangent space to $\cfr$ contains of four copies of $T\R^k$ and that reordering these from $(1,2,3,4)$ to $(2,3,4,1)$ requires $3k^2$ swaps of basis elements. Thus $\mu$ is orientation reversing or  preserving on $\cfr$ as $k$ is odd or even. 
\end{proof}

In order to understand how the action $\mu$ affects the orientation on $\slq=g^{-1}(1,1,1,0)$, note  that $\slq$ carries the preimage orientation.  We now review the definition of the  preimage orientation from \cite{Guillemin:2010ti}. 
Assume $f \co X \rightarrow Y$ is transverse to $Z \subset Y$, let $f(x)=z\in Z$, and suppose $H_x$ is a  subspace of $T_xX$ complementary to the the subspace $T_x(f^{-1}(Z))$. Then the orientation of $Z$ and $Y$ induce a direct image orientation on $Df_x H_x$.  Since  $T_x(f^{-1}(Z))$ contains the entire kernel of $Df_x$, then $Df_x$ maps $H_x$ isomorphically onto its image. The induced orientation on $Df_xH_x$ defines an orientation on $H_x$ via the map $Df_x$. In summary, the two direct sums
\begin{align*} Df_xH_x\oplus T_z(Z) & = T_z(Y),
\\ H_x\oplus T_x(f^{-1}(Z)) &= T_x(X) 
\end{align*}
define the orientation on $H_x$ and on $T_x(f^{-1}(Z))$.


\begin{proposition} \label{prop:orientation}
The map $\mu$ reverses orientation on $\slq\cap\cfro$ if $k$ is odd, and preserves orientation if $k$ is even.
\end{proposition}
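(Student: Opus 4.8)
The plan is to combine the two orientation computations we already have: Lemma~\ref{lem:cfr-orient}, which tells us how $\mu$ acts on the orientation of the ambient space $\cfro$, and the preimage-orientation bookkeeping reviewed just before the statement. Since $\slq\cap\cfro = g^{-1}(1,1,1,0)$ carries the preimage orientation under $g\co\cfro\to\R^4$, the sign by which $\mu$ twists the orientation of $\slq$ is determined by (i) the sign by which $\mu$ twists the orientation of $\cfro$ and (ii) the sign by which the ``vertical'' map $g$ interacts with the $\mu$-action on the target $\R^4$. The key observation is that $\mu$ is compatible with $g$ up to a permutation of the four coordinate functions of $g$: conjugating $r_{124}, r_{231}, r_{342}$ and $r_{132}-r_{241}$ by the relabeling $(1234)\mapsto(2341)$ permutes the first three coordinates cyclically and fixes (up to sign) the last one, so there is a linear map $\rho\co\R^4\to\R^4$ with $g\circ\mu = \rho\circ g$, and $\rho$ is an even or odd permutation-with-sign whose determinant we can read off directly.

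So the steps, in order, are: first, write down explicitly how $\mu$ permutes the defining ratios. Under $(\bfp_1,\bfp_2,\bfp_3,\bfp_4)\mapsto(\bfp_2,\bfp_3,\bfp_4,\bfp_1)$ we have $r_{124}\mapsto r_{231}$, $r_{231}\mapsto r_{342}$, $r_{342}\mapsto r_{413} = r_{124}$ (using that at points of $\slq$ all sides are equal, so $r_{413}=1$ there too, but more to the point the symbolic substitution sends $342\mapsto 413$), and $r_{132}-r_{241}\mapsto r_{243}-r_{312}$. The last pair needs a short argument at points of $\slq$: since $\norm{\bfp_1-\bfp_3}=\norm{\bfp_2-\bfp_4}$ and all the sidelengths agree, $r_{243}-r_{312}$ equals $-(r_{132}-r_{241})$ (the diagonal ratios get swapped, producing a sign). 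Hence $g\circ\mu = \rho\circ g$ near $\slq$, where $\rho$ is the composite of the $3$-cycle $(123)$ on the first three coordinates with multiplication by $-1$ on the fourth; thus $\det\rho = (+1)\cdot(-1) = -1$ — the $3$-cycle is even.

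Second, invoke the preimage-orientation formalism. With $f=g$, $X=\cfro$, $Y=\R^4$, $Z=\{(1,1,1,0)\}$, and $x$ a point of $\slq$, we have the defining decomposition $H_x\oplus T_x(\slq) = T_x(\cfro)$ with $Dg_x$ mapping $H_x$ isomorphically onto $T_{g(x)}\R^4 = \R^4$ and carrying the (trivial, i.e. fixed-basis) orientation of $\R^4$ back to the chosen orientation of $H_x$. Applying $\mu$ and using $g\circ\mu=\rho\circ g$, the map $Dg_{\mu(x)}$ on $D\mu_x H_x$ is identified with $\rho\circ Dg_x\circ (D\mu_x)^{-1}$. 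Chasing the two direct-sum relations, the orientation $\mu$ induces on $T_{\mu(x)}(\slq)$ differs from the preimage orientation by exactly the product of the sign by which $D\mu_x$ twists the orientation of $T_x(\cfro)$ (which is $(-1)^{3k^2}=(-1)^k$ by Lemma~\ref{lem:cfr-orient}) and the sign $\det\rho = -1$ coming from the twist on the target. The product is $(-1)^k\cdot(-1) = (-1)^{k+1}$, which is $+1$ for $k$ odd and $-1$ for $k$ even — wait, that is backwards from the claim, so in writing this up I must track the sign conventions carefully; the correct statement in the proposition is that $\mu$ reverses orientation on $\slq$ when $k$ is odd, which forces the two sign contributions to \emph{reinforce} rather than cancel, i.e. the relevant target sign must be $+1$. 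This is the point to be careful about.

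I expect the main obstacle to be precisely this sign accounting: getting the interaction of $\mu$ with the preimage orientation right, in particular whether the sign of $\rho$ enters as $\det\rho$ or as $(\det\rho)^{-1}$ (same thing here) and whether it combines with $(-1)^k$ additively in the exponent or cancels part of it, and re-examining whether $r_{342}\mapsto r_{124}$ or some other ratio and whether the fourth coordinate really picks up a sign. The cleanest way to avoid errors is to do the whole computation at the explicit planar base point from Case~1 of Proposition~\ref{prop:slq is submanifold of r2}, where we already have an explicit ordered basis $\{\h_1,\h_2,\h_3,\h_4\}$ of a complement $H_x$ and an explicit matrix for $Dg|_{H_x}$ with positive determinant $8(a^4-b^4)/a^2b^2$. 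Then $\mu$ permutes the $\h_i$ cyclically (up to the sign coming from reflecting the relevant perpendicular/tangent directions), and one simply reads off from the resulting conjugated matrix, together with the $(-1)^{3k^2}$ swap count on the four $T\R^k$ blocks, the net sign on $T_x(\slq)$. Because the planar point lies in every $\slq$ for $k\ge 2$ (it sits in the $\R^2\subset\R^k$) and $\slq\cap\cfro$ is connected for $k>2$ and has the interior as one component for $k=2$, computing the sign at this one point determines it everywhere on the relevant component, completing the proof.
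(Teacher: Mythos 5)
Your overall strategy is sound and actually more structural than the paper's (the paper simply redoes the determinant computation of Proposition~\ref{prop:slq is submanifold of r2} on the pushed-forward basis $\{D\mu_\p(\h_i)\}$ and compares signs). But your explicit determination of $\rho$ contains an error that produces exactly the sign paradox you noticed and did not resolve.

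The problem is the third coordinate. The symbolic substitution $(1,2,3,4)\mapsto(2,3,4,1)$ sends $r_{342}\mapsto r_{413}$, and you then identify $r_{413}$ with $r_{124}$ because both equal $1$ at points of $\slq$. Equality of \emph{values} at $\slq$ is not enough: the preimage-orientation computation needs equality of \emph{derivatives} restricted to the complement $H_\p$, and there is no identity $r_{413}=r_{124}$. The actual relation is the telescoping identity
$$r_{124}\,r_{231}\,r_{342}\,r_{413}\equiv 1$$
(valid on all of $\cfro$), whose logarithmic derivative at a point where all four ratios equal $1$ gives
$$D(r_{413}^2) = -D(r_{124}^2) - D(r_{231}^2) - D(r_{342}^2).$$
So the correct linearization $\rho$ at a point of $\slq$ acts by $(u,v,w,z)\mapsto(v,w,-u-v-w,-z)$, whose matrix
$$\begin{pmatrix}0&1&0&0\\0&0&1&0\\-1&-1&-1&0\\0&0&0&-1\end{pmatrix}$$
has determinant $+1$, not $-1$. (Your fourth coordinate sign $-1$ is correct: at points of $\slq$ one does get $D(r_{243}^2-r_{312}^2)=-D(r_{132}^2-r_{241}^2)$, since the extra factor $s_1/s_2$ differentiates against $r_{132}-r_{241}=0$.) With $\det\rho=+1$, the target contributes no sign twist, the net sign on $T_\p(\slq)$ is $(-1)^{3k^2}=(-1)^k$, and the proposition follows with the orientations landing on the correct sides. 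This is exactly consistent with the paper's explicit computation, where both $Dg|_{H_\p}$ and $Dg|_{D\mu_\p(H_\p)}$ come out with the \emph{same} positive determinant $8(a^4-b^4)/a^2b^2$.

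So the gap is concrete and fixable: replace the claimed $3$-cycle on the first three coordinates of $\rho$ by the row $(-1,-1,-1,0)$, justified by the identity $r_{124}r_{231}r_{342}r_{413}\equiv1$ and its linearization at $\slq$. Your fallback suggestion---just redo the determinant at the planar base point and invoke connectedness of $\slq\cap\cfro$ (component-by-component for $k=2$)---is valid and is essentially what the paper does, but it sacrifices the conceptual clarity that was the point of introducing $\rho$ in the first place.
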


\begin{proof} Since $\slq=g^{-1}(1,1,1,0)$, where $g$ is given by Equation~\ref{slq-eq}, 
then $T_{\p}(\slq)\subset T_{\p}(\cfr)$. Recall from the proof of \prop{slq is submanifold of r2}, that a tangent vector at $\p$ is denoted by $\h = \bfh(\p)=(\bfv_1,\bfv_2,\bfv_3,\bfv_4)$, where $\bfv_i$ is a tangent vector at $\bfp_i$ and the $\alpha(\p)$ information has been suppressed.
  In the following argument, we will need the two direct sums from the definition of preimage orientation translated to our setting. Assuming that $g(\p)=(1,1,1,0)$ and noting that $T_{(1,1,1,0)}(1,1,1,0)=\{0\}$, then we obtain
\begin{align} Dg_{\p} H_{\p}\oplus \{0\} & = T_{(1,1,1,0)}(\R^4),      \label{PO1}
\\ H_{\p} \oplus T_{\p}(\slq) &= T_{\p}(\cfr).        \label{PO2}
\end{align}

To prove the proposition, use the variations of quadrilaterals in $\slq$ seen in the proof of \prop{slq is submanifold of r2}. These turn out to behave nicely under the $\Z/4\Z$ action! There are two cases, depending on whether the point $\p\in\slq$ is a planar square, or is a nonplanar quadrilateral. In both cases we define $H_{\p}=\Span\{\h_1, \h_2,\h_3,\h_4\}$ where the $\h_i$'s were defined in the proof of \prop{slq is submanifold of r2}.
In each case,  we proved the vectors are linearly independent, thus give a basis $\mathcal{B}_H$ for $H_{\p}$, which is the  subspace of $T_{\p}(\cfr)$ complementary to $T_{\p}(\slq)$.

In the proof of \prop{slq is submanifold of r2}, we saw that for both cases $Dg$ has positive determinant so is an orientation preserving isomorphism onto its image. Since $\R^4$ and $T_{(1,1,1,0)}(\R^4)$ have the standard orientation,  we deduce that our bases $\mathcal{B}_H$ for $H$ are positively oriented using Equation~\ref{PO1}.  Note that $\cfr$ inherits a positive orientation from picking a consistent positive orientation on each $\R^k$. Using Equation~\ref{PO2}, we can define a basis $\mathcal{B}_S$ of $T_{\p}(\slq)$ which gives a positive orientation. This orientation is the preimage orientation of $T_{\p}(\slq)$.
 
The map $\mu:\cfr\rightarrow \cfr$ is a diffeomorphism, so is an isomorphism on each tangent space. 
We can check whether this isomorphism is orientation preserving or reversing on $\slq$ by another computation.  
Now, $\mu$ restricts to a map $\mu:\slq\rightarrow \slq$, and hence $D\mu_{\p}:T_{\p}\slq\rightarrow T_{\mu(\p)}\slq$. Applying $D\mu_{\p}$ to each side of Equation~\ref{PO2} gives
\begin{equation}
D\mu_{\p}(H_{\p}) \oplus D\mu_{\p}(T_{\p}(\slq)) \cong D\mu_{\p}(T_{\p}(\cfr))\cong T_{\mu(\p)}(\cfr).\label{PO3}
\end{equation}

\lem{cfr-orient} showed that the last isomorphism is orientation preserving if $k$ is even, and orientation reversing if $k$ is odd.
We computed above that $\mathcal{B}_H$ is a positively oriented basis of the complementary space $H_{\p}$. We need to know if these vectors push-forward to a positively oriented basis for the subspace $D\mu_{\p}(H_{\p})$. To do this, observe that Equations~\ref{PO1} and \ref{PO2} hold for any complementary subspace of $T_{\p}(\slq)$ and any point $\p\in\slq$, in particular for $D\mu_{\p} (H_{\p})$ and $\mu(\p)$.
Thus
\begin{align} Dg(D\mu_{\p} (H_{\p}))\oplus \{0\} & = T_{(1,1,1,0)}(\R^4),  \label{PO4}
\\ D\mu_{\p}(H_{\p}) \oplus T_{\mu(\p)}\slq & = T_{\mu(\p)}(\cfr). \label{PO5}
\end{align}
We thus check what happens to $\mathcal{B}_H$ under $Dg$. As before, there are two cases, depending on whether $\p\in\slq$ is planar or non-planar. We will see below that in both cases $\det Dg>0$, and so $Dg$ is orientation preserving. Combining information from Equations~\ref{PO3} and \ref{PO5} leads us to conclude the following. When $k$ is even, the orientation of $ D\mu_{\p}(T_{\p}(\cfr))$ matches that of $T_{\mu(\p)}(\cfr)$, and so we deduce that the push-forward orientation on $D\mu_{\p}(T_{\p}\slq)$ is equal to the preimage orientation on $T_{\mu(\p)}\slq$ as claimed. When $k$ is odd, the push-forward orientation of $ D\mu_{\p}(T_{\p}(\cfr))$ is opposite that of $T_{\mu(\p)}(\cfr)$, and thus the orientation on $D\mu_{\p}(T_{\p}\slq)$ is opposite to the preimage orientation on $T_{\mu(\p)}\slq$, also as claimed. Note that we need both cases, because when $k>2$, we can still have planar squares in $\slq$ (and our other variational vector fields will be harder to define).

All that remains to complete the proof, is to show that $\det Dg>0$ and so $Dg$ is orientation preserving.  {\bf Case 1:} Assume that $\p\in\slq$ is a planar configuration, and use the corresponding  basis $\{\h_1, \h_2, \h_3,\h_4\}$  of vectors moving points along the ellipse from the proof of \prop{slq is submanifold of r2}. 
To help keep track of the action of $\mu$, denote $\mu(\p)=(\bfp_2, \bfp_3,\bfp_4,\bfp_1)=(\hat{\bfp}_1,\hat{\bfp}_2,\hat{\bfp}_3, \hat{\bfp}_4)$, and so $\hat{\bfp}_1=\bfp_2,\dots, \hat{\bfp}_4=\bfp_1$. 
Note that $D\mu_{\p}(\h_1)=D\mu_{\p}\begin{bmatrix} \bfv_1\\0\\0\\0\end{bmatrix} = \begin{bmatrix} 0\\0\\0\\ \bfv_1\end{bmatrix}$, and so the vector $\bfv_1$ is at point $\hat{\bfp}_4=\bfp_1$. Thus $D\mu_{\p}(\h_1)\norm{\hat{\bfp}_4-\hat{\bfp}_3}=\frac{b^2}{\sqrt{a^2+b^2}}$, $D\mu_{\p}(\h_1)\norm{\hat{\bfp}_1-\hat{\bfp}_4}=-\frac{a^2}{\sqrt{a^2+b^2}}$, and $D\mu_{\p}(\h_1)\norm{\hat{\bfp}_4-\hat{\bfp}_2}=\nicefrac{-a^2+b^2}{\sqrt{2(a^2+b^2)}}$. Computing using the first row of Equation~\ref{eqn:Dg} gives
$$D\mu_{\p}(\h_1) (r_{124}^2) = \frac{\sqrt{a^2+b^2}}{ab}\left(0+ \frac{a^2}{a^2+b^2}\right) = \frac{a}{b}.$$
Once again, we use {\em Mathematica} and compute that on the space $\Span\{D\mu_{\p}(\h_1), D\mu_{\p}(\h_2), D\mu_{\p}(\h_3), D\mu_{\p}(\h_4)\}$, we have:
$$
Dg=\begin{pmatrix} 
\frac{a}{b} &-\frac{a}{b} - \frac{b}{a} & \frac{b}{a} & 0  \\
0 &  \frac{b}{a}  & -\frac{a}{b} - \frac{b}{a} & \frac{a}{b}  \\
 \frac{b}{a} & 0   & \frac{a}{b}  & - \frac{a}{b}- \frac{b}{a}  \\
\frac{a}{b} - \frac{b}{a} & \frac{a}{b} - \frac{b}{a} & \frac{a}{b} - \frac{b}{a}  & \frac{a}{b} - \frac{b}{a}
\end{pmatrix}.
$$
This matrix has determinant $\displaystyle \frac{8(a^4-b^4)}{a^2b^2}$ which is positive (since we assumed $a>b$). 

{\bf Case 2:} Assume that $\p\in\slq$ corresponds to a nonplanar configuration, and use the other basis (also named) $\{\h_1, \h_2, \h_3,\h_4\}$ from the proof of \prop{slq is submanifold of r2}. As before denote $\mu(\p)=(\bfp_2, \bfp_3,\bfp_4,\bfp_1)=(\hat{\bfp}_1,\hat{\bfp}_2,\hat{\bfp}_3, \hat{\bfp}_4)$. By using similar reasoning, we observe that $D\mu_{\p}(\h_1)\norm{\hat{\bfp}_4-\hat{\bfp}_3}=-\ell/2$, and all other derivatives are 0. Thus all entries of the first column of $Dg$ are zero, except for
$$D\mu_{\p}(\h_1) (r_{342}^2) = \frac{2}{\ell}\left(-\frac{\ell}{2}-0 \right) =-1.$$

Once again, we compute that on the space $\Span\{D\mu_{\p}(\h_1), D\mu_{\p}(\h_2), D\mu_{\p}(\h_3), D\mu_{\p}(\h_4)\}$, we have:
\begin{equation*}
Dg = \begin{pmatrix}
0  &  1  & 0 & 0 \\
0   &  -1  &  1 & 0 \\
-1   &  0  &  -1 & 0 \\
0   &  0 &  0 & -1 
\end{pmatrix},
\end{equation*}
and $\det Dg=1>0$, as desired. 
\end{proof}

\begin{remark} Note that \prop{orientation} can be extended to hold for the $(1234)$ face of $\bdry\slq$. However, the $(13)(24)$ face needs an additional computation. We have chosen not to include this computation, since $\bdry\slq$ and $\bdry C_4^0[\gamma(S^1)]$ are disjoint in $\cfr$, and we don't need it for the following result.
\end{remark}

\begin{proposition}\label{prop:action}
The manifolds $\cfr$, $C_4^0[\gamma(S^1)]$, and $\slq$ share a smooth, free, and properly discontinuous $\Z/4\Z$ action given by cyclically relabeling points in a configuration. 
\begin{compactenum}
\item The generator $(\bfp_1,\bfp_2,\bfp_3,\bfp_4) \mapsto (\bfp_2,\bfp_3,\bfp_4,\bfp_1)$ is always orientation-reversing on $\cfog$. It is orientation-reversing on both $\cfr$ and $\slq$ if $k$ is odd, and orientation preserving on $\cfr$ and $\slq$ if $k$ is even.
\item The quotient spaces by the action of $\Z/4\Z$, $\qcfr$ and $\hat{C}_4^0[\gamma(S^1)]$, are manifolds-with-boundary and corners, with $\hat{C}_4^0[\gamma(S^1)]$ non-orientable. Also, $\qcfr$ is non-orientable when $k$ is odd, and orientable when $k$ is  even.
\item The intersection of $\qslq$ (the quotient space by the action of $\Z/4\Z$) with the complement of an $\epsilon$-neighborhood of the boundary face $(13)(24)$ (which is preserved under the action), is a manifold-with-boundary. It is orientable precisely when $\qcfr$ is.
\item The spaces $\qslq$ and $\hat{C}_4^0[\gamma(S^1)]$ are boundary-disjoint in $\qcfr$.
\end{compactenum}
\label{prop:quotient}
\end{proposition}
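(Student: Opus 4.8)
The plan is to assemble the four claims from results already established in the paper, treating each as a routine package-up of earlier work together with standard facts about free properly-discontinuous actions on manifolds-with-corners. The only genuinely new content is the orientability bookkeeping, and for that the heavy lifting is already done in \lem{cfr-orient} and \prop{orientation}.

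First, for claim (1): the action is smooth and cyclic of order $4$ on all three spaces since $\mu$ is a smooth diffeomorphism of $\cfr$ (a linear permutation of coordinates, hence smooth) that by \defn{slq} descends to $\slq$ and by \defn{ccng} carries $\cfog$ to itself (cyclic relabeling preserves cyclic order along $\gamma$). Freeness and proper discontinuity are noted after \defn{cyclic}, with the argument borrowed from Sinha's Theorem~4.10 of \cite{newkey119}; on $\cfog$ freeness is immediate because four distinct labeled points on a curve are never fixed by a nontrivial cyclic relabeling. The orientation statement on $\cfr$ and $\slq$ is exactly \lem{cfr-orient} and \prop{orientation}. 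For $\cfog$ we argue directly: at an interior point the tangent space is $T_{\bfp_1}S^1 \oplus \cdots \oplus T_{\bfp_4}S^1$, four $1$-dimensional summands, and the cyclic shift $(1234)\mapsto(2341)$ is a $4$-cycle of these summands, hence an odd permutation of a basis, so it reverses orientation regardless of $k$ — and by continuity and density of the interior in $\cfog$ this holds everywhere. (Alternatively one can feed this into the relative-orientation argument of \prop{orientation} with $C_4[\gamma(S^1)]$ in place of $\cfr$, but the direct tangent-space count is cleaner.)

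For claim (2): a free, properly discontinuous action of a finite group on a manifold-with-corners yields a quotient that is again a manifold-with-corners, with the quotient map a covering (this is standard; proper discontinuity of a finite group action is automatic, and freeness gives local charts downstairs). The orientability statements then follow from (1): a manifold with a free orientation-reversing involution-or-cyclic-action in its deck group has non-orientable quotient, while if every deck transformation is orientation-preserving the quotient is orientable (when the space itself is orientable). Since $\cfog$ carries an orientation-reversing generator, $\hat C_4^0[\gamma(S^1)]$ is non-orientable; since the generator on $\cfr$ is orientation-reversing exactly when $k$ is odd, $\qcfr$ is non-orientable exactly when $k$ is odd and orientable when $k$ is even (and $\cfr$ is orientable by \prop{slq is submanifold of r2}-style reasoning, or just because $(\R^k)^4$ is). For claim (3): the boundary face $(13)(24)$ is $\mu$-invariant (one checks $\mu$ sends the parenthesization $(13)(24)$ to $(24)(31)=(13)(24)$ as an unordered collection), so its $\epsilon$-neighborhood is invariant and the action restricts to a free properly discontinuous action on the complement, which by \prop{slq-boundary} and \prop{slq is submanifold of r2} is a manifold-with-boundary; orientability of the quotient again tracks whether the generator preserves orientation, which by \prop{orientation} (extended to the $(1234)$ face as in the \textbf{Remark}) is the same condition as for $\qcfr$, namely $k$ even.

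Finally, claim (4) is a direct consequence of \lem{boundry-disjoint}: disjointness of $\bdry\slq$ and $\bdry C_4^0[\gamma(S^1)]$ in $\bdry\cfr$ is preserved under passing to the quotient, since the quotient map sends $\bdry\slq$ onto $\bdry\qslq$ and $\bdry C_4^0[\gamma(S^1)]$ onto $\bdry\hat C_4^0[\gamma(S^1)]$ (these containments in the respective boundaries are preserved because the covering map is a local diffeomorphism respecting the corner stratification), and two disjoint invariant sets have disjoint images. The main obstacle — really the only place any care is needed — is making sure the relative-orientation computation of \prop{orientation} for $\cfog$ goes through, i.e. confirming that cyclic relabeling is orientation-reversing on the curve configuration space in \emph{every} dimension $k$; the tangent-space count above (four $1$-dimensional blocks permuted by a $4$-cycle) settles this cleanly, so I would lead with that observation and then let the finite-group-quotient machinery and \lem{boundry-disjoint} do the rest.
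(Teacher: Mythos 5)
Your proposal is correct and follows essentially the same route as the paper: cite \lem{cfr-orient} and \prop{orientation} for the orientation behavior on $\cfr$ and $\slq$, observe directly that the $4$-cycle on the four $1$-dimensional tangent summands of $\cfog$ is an odd permutation (and hence orientation-reversing independently of $k$), and then let the standard machinery of free properly discontinuous finite-group quotients, together with \prop{slq is submanifold of r2}, \prop{slq-boundary}, and \lem{boundry-disjoint}, deliver (2)–(4). The paper's proof is terser but relies on exactly the same ingredients; your explicit tangent-space count for $\cfog$ is a welcome unpacking of the paper's ``straightforward to see.''
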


\begin{proof}
We have already seen that the action on $\cfr$ is smooth, free and properly discontinuous and that it descends to a corresponding action on the submanifolds $C_4^0[\gamma(S^1)]$ and $\slq$.  It is straightforward to see this action is orientation-reversing on $C_4^0[\gamma(S^1)]$. \lem{cfr-orient} and \prop{orientation} proves the last part of (1). Statement (2) follows immediately. Statement (3) follows from Propositions~\ref{prop:slq is submanifold of r2}, \ref{prop:slq-boundary}, and~\ref{prop:orientation}. 
We note for (4) that the action is actually an isometry on $\cfg$, so it does indeed descend to the $\epsilon$-neighborhood of $(13)(24)$. Thus the quotient spaces remain boundary-disjoint.
\end{proof}

In conclusion, the spaces we will apply our method to from Section~\ref{sect:main-method} are $\qcfr$, $Z=\qslq$ and $\hat{C}_4^0[\gamma(S^1)]$.


\subsection{Base case and conclusion}
\label{sect:basecase}
Before we complete our arguments, we need to consider a base case. For us this is a planar ellipse.
\begin{lemma}
In $\R^2$, if the image of $\gamma\co S^1\rightarrow \R^2$ is a planar ellipse $ \nicefrac{x^2}{a^2} + \nicefrac{y^2}{b^2} = 1$ with $a>b$, then $\hat{C}^0_4[\gamma(S^1)] \cap \qslq\neq\emptyset$, and the intersection represents a single square.
\end{lemma}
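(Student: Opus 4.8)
The plan is to deduce this from the classical fact (essentially Emch's observation) that an ellipse with unequal semi-axes contains exactly one inscribed square, and then to translate that fact into a point count in the quotient configuration spaces. I would fix the parametrization $\gamma(\theta)=(a\cos\theta,b\sin\theta)$ with $a>b$ and work entirely in the interiors of the configuration spaces, since this lemma makes no transversality claim and no boundary analysis is needed.

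First I would show that any inscribed square is centered at the center of the ellipse. A point of $C_4^0[\gamma(S^1)]\cap\slq$ is a $4$-tuple $(\bfp_1,\bfp_2,\bfp_3,\bfp_4)$ of distinct points occurring in cyclic order along the ellipse with $\norm{\bfp_1-\bfp_2}=\norm{\bfp_2-\bfp_3}=\norm{\bfp_3-\bfp_4}=\norm{\bfp_4-\bfp_1}$ and $\norm{\bfp_1-\bfp_3}=\norm{\bfp_2-\bfp_4}$. The four side-equalities make each of $\bfp_1,\bfp_3$ equidistant from $\bfp_2$ and $\bfp_4$, and each of $\bfp_2,\bfp_4$ equidistant from $\bfp_1$ and $\bfp_3$, so line $\bfp_1\bfp_3$ is the perpendicular bisector of $\bfp_2\bfp_4$ and vice versa; hence the diagonals $\bfp_1\bfp_3$ and $\bfp_2\bfp_4$ are perpendicular and bisect each other at a common midpoint $c$, and with the equal-diagonal condition the configuration is a square with center $c$. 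Now $\bfp_1\bfp_3$ and $\bfp_2\bfp_4$ are two \emph{distinct} chords of the ellipse bisected at the same point $c$. Since an affine map takes the ellipse to a circle and preserves midpoints and chords, and for a circle the unique chord bisected at a given non-central interior point is the one perpendicular to the radius through it, two distinct chords can share a midpoint only when that midpoint is the center; so $c$ is the center of the ellipse.

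Next I would pin down the square and then pass to the quotients. A square centered at the origin has vertex set $\{v,Rv,-v,-Rv\}$ where $R$ is rotation by $\pi/2$ and $v=(r,s)$, so $Rv=(-s,r)$; requiring $v$ and $Rv$ on the ellipse gives $\tfrac{r^2}{a^2}+\tfrac{s^2}{b^2}=\tfrac{s^2}{a^2}+\tfrac{r^2}{b^2}=1$, and subtracting, $(r^2-s^2)(\tfrac1{a^2}-\tfrac1{b^2})=0$, which forces $r^2=s^2$ because $a\neq b$, and then $r^2=s^2=\tfrac{a^2b^2}{a^2+b^2}$. Thus the inscribed square is unique, with vertices $(\pm t,\pm t)$, $t=ab/\sqrt{a^2+b^2}$ — exactly the square used in Case~1 of the proof of \prop{slq is submanifold of r2}. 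To finish, I would label the four vertices of this square cyclically in agreement with the orientation of $\gamma$ and check directly (consecutive vertices of a square realize the common side length, opposite vertices the common diagonal) that this labeled tuple satisfies $r_{124}=r_{231}=r_{342}=1$ and $r_{132}=r_{241}$, so it lies in $C_4^0[\gamma(S^1)]\cap\slq$; its image in $\qcfr$ is a point of $\hat C^0_4[\gamma(S^1)]\cap\qslq$, giving nonemptiness. Conversely, any point of $\hat C^0_4[\gamma(S^1)]\cap\qslq$ lifts to a point of $C_4^0[\gamma(S^1)]\cap\slq$, which by the previous steps is one of the four cyclic relabelings of the unique square; these form a single $\Z/4\Z$-orbit, hence project to one point of $\qcfr$. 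So $\hat C^0_4[\gamma(S^1)]\cap\qslq$ is a single point and represents exactly one square.

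I expect the main obstacle — such as it is — to be purely bookkeeping in the last step: verifying that the cyclic-order condition cutting out $C_4^0$ is compatible with a cyclic vertex labeling of the square for which the \emph{specific} indexed equalities in \defn{slq} hold (which index pairs play the role of ``sides'' versus ``diagonals''), and confirming that the fiber of the $\Z/4\Z$-quotient over the resulting point is exactly the orbit of that one labeling. The underlying geometry is elementary, so there is no analytic difficulty; the care is entirely in matching the combinatorial data of the configuration space to the geometry.
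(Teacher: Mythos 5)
Your proof is correct and establishes the lemma, but the details of your argument differ from the paper's in two interesting ways. To locate the center of the square at the center $O$ of the ellipse, the paper invokes an auxiliary lemma (midpoints of parallel chords lie on a line through $O$), applies it to the midpoints of opposite \emph{sides} of the square to get two lines through $O$, and notes their intersection is both $O$ and the square's center. You instead observe from the side- and diagonal-conditions that the two \emph{diagonals} of a square-like quadrilateral are perpendicular bisectors of one another and hence share a common midpoint $c$; then, by passing through the affine map to a circle and using the uniqueness of the chord bisected at a non-central point, you force $c=O$ directly — a more economical route. To then pin down the square, the paper returns to the trigonometric parametrization, shows that the vertices must pair up under the flip symmetries of the ellipse, and enumerates the symmetric quadrilaterals (ruling out the ``exceptional'' rhombus $\{(\pm a,0),(0,\pm b)\}$ by hand); you instead write the vertex set as $\{v,Rv,-v,-Rv\}$ for a quarter-turn $R$, impose that $v$ and $Rv$ both lie on the ellipse, and immediately get $r^2=s^2$ from subtracting the two equations — cleaner algebra, with no case to discard. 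Your handling of the passage to the $\Z/4\Z$-quotient, including the remark that only cyclic (not arbitrary) relabelings respect membership in $C_4^0$, matches what the paper needs; so while the underlying geometry is the same classical fact, your route is a genuinely different and somewhat tidier derivation.
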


\begin{proof} We prove that the intersection is a single square. This is a straightforward computation relying on the symmetry of the ellipse and is found in  \lem{one-square-ellipse} in Appendix~\ref{sect:ellipse}.
\end{proof}

\begin{proposition}\label{prop:ellipse-transverse}
In $\R^2$,  if the image of $\gamma\co S^1\rightarrow \R^2$  is a planar ellipse $ \nicefrac{x^2}{a^2} + \nicefrac{y^2}{b^2} = 1$ with $a>b$, then $\hat{C}^0_4[\gamma(S^1)]$ and $\qslq$ intersect transversely (namely, $\qcfog \transverse \qslq$), and the intersection represents a single square.
\end{proposition}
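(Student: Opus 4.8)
The plan is to deduce \prop{ellipse-transverse} from the linear–algebra computation already carried out in the proof of \prop{slq is submanifold of r2}, together with the preceding lemma. First I would pass to the quotients: since the $\Z/4\Z$-action is free and properly discontinuous (\prop{action}), the quotient maps $\cfro \to \qcfr$, $\slq\cap\cfro \to \qslq$, and $C_4^0[\gamma(S^1)] \to \qcfog$ are local diffeomorphisms, so $\qcfog \transverse \qslq$ at a point exactly when $C_4^0[\gamma(S^1)] \transverse \slq$ at any of its lifts. By \lem{boundry-disjoint}, the intersection of $\hat{C}^0_4[\gamma(S^1)]$ and $\qslq$ avoids the boundary, so it is enough to check transversality at an interior intersection point $\p \in \cfro$ (with $k=2$), where both $\slq$ and $C_4^0[\gamma(S^1)]$ are honest submanifolds. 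The preceding lemma supplies exactly one such $\p$, the square inscribed in the ellipse, and identifies the intersection with a single square, which already gives the last clause of the proposition; transversality is then a statement about the single point $\p$.

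Next comes a dimension count. For $k=2$, $\dim\cfro = 8$; by \prop{slq is submanifold of r2}, $\slq\cap\cfro = g^{-1}(1,1,1,0)$ has codimension $4$, hence dimension $4$; and $C_4^0[\gamma(S^1)]$, being diffeomorphic away from its boundary to an open subset of $(S^1)^4$, also has dimension $4$. Since $4+4=8$, transversality at $\p$ is equivalent to $T_\p C_4^0[\gamma(S^1)] \cap T_\p\slq = \{0\}$, and because $T_\p\slq = \ker Dg_\p$, this is in turn equivalent to the restriction of the linear map $Dg_\p$ to the $4$-dimensional subspace $T_\p C_4^0[\gamma(S^1)]$ being injective, i.e.\ an isomorphism onto $\R^4$.

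The crucial observation, which makes the proof immediate, is that this restriction has already been computed. Since $C_4[\gamma]$ is the functorial image of $C_4[S^1]$ and a variation of the $i$-th point of $S^1$ pushes forward to a variation of $\bfp_i$ along $\gamma(S^1)$, the tangent space $T_\p C_4^0[\gamma(S^1)]$ is precisely $\Span\{\h_1,\h_2,\h_3,\h_4\}$, where the $\h_i$ are the vectors built in Case~1 of the proof of \prop{slq is submanifold of r2}, each $\h_i$ moving only the vertex $\bfp_i$ along the ellipse. There we found (using \emph{Mathematica}) that $Dg$ restricted to $\Span\{\h_1,\h_2,\h_3,\h_4\}$ is a $4\times 4$ matrix of determinant $\tfrac{8(a^4-b^4)}{a^2b^2}$, which is nonzero exactly because $a>b$. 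Hence $Dg_\p$ is an isomorphism on $T_\p C_4^0[\gamma(S^1)]$, so $T_\p C_4^0[\gamma(S^1)] \cap T_\p\slq = \{0\}$ and, by the dimension count, $\cfog \transverse \slq$ at $\p$; this descends to $\qcfog \transverse \qslq$, completing the proof.

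I do not expect a genuine obstacle: the analytic content is the determinant computation, which is inherited verbatim from \prop{slq is submanifold of r2}. The only points that need care are (i) the passage of transversality to and from the quotient and the reduction to an interior point via \lem{boundry-disjoint}, so that the tangent-space statements are meaningful; and (ii) the identification $T_\p C_4^0[\gamma(S^1)] = \Span\{\h_1,\h_2,\h_3,\h_4\}$ — that is, recognizing that the very vectors used earlier to prove $\slq$ is a submanifold happen to span the tangent space to the curve's configuration space at the inscribed square.
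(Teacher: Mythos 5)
Your proof is correct and follows essentially the same approach as the paper: both reduce transversality of $\cfog$ with $\slq$ at the unique inscribed square to the nonvanishing of the $4\times 4$ determinant computed in Case~1 of the proof of \prop{slq is submanifold of r2} (noting that the vectors $\h_i$ there are tangent to the ellipse and hence span $T_\p C^0_4[\gamma(S^1)]$), then descend to the quotients via the local-diffeomorphism property of the free $\Z/4\Z$-action. Your version is slightly more explicit in spelling out the dimension count that equates surjectivity of $Dg|_{T_\p C^0_4[\gamma]}$ with trivial intersection of tangent spaces, but the substance is identical.
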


\begin{proof} Recall that $\slq$ is the preimage of $(1,1,1,0)$ under the map $g$ given by Equation~\ref{slq-eq}. To prove that  $C_4^0[\gamma(S^1)]$ is transverse to  $\slq$, we show that $g$ restricted to $C_4^0[\gamma(S^1)]$ is transverse to $(1,1,1,0)$. Recall from the proof of \prop{slq is submanifold of r2} that in Case 1, we considered the planar case. We started with a square inscribed in an ellipse. We then considered tangent vectors corresponding to motions of the square along (and tangent to) the ellipse. We computed $Dg$ restricted to these vectors. Since $Dg$ had nonzero determinant, this means we proved that $C_4^0[\gamma]\pitchfork \slq$. 

Recall that the quotient spaces $\hat{C}^0_4[\gamma(S^1)]$ and $\qslq$ arise from the action of the map $\mu$ (see  \prop{quotient}) that cyclically permutes the coordinates. The map $\mu$ is differentiable and an isometry. If we look at the intersection of the quotient spaces $\hat{C}^0_4[\gamma(S^1)]$ and $\qslq$, then this is isometric to any of the 4 pre-images. Since transversality is a local computation, it descends to the quotient spaces. We can then conclude that the quotient spaces $\hat{C}_4^0[\gamma(S^1)]$ and $\qslq$ are transverse as well. 
\end{proof}

As we remarked at the beginning of Section~\ref{sect:cyclic}, the number of labeled squares must be even because every square is counted 4 times. So the homology class of $C_4^0[\gamma(S^1)]\cap\slq$  in $H_0(\slq;\Z/2\Z)$ is zero. However, taking quotients mod $\Z/4\Z$ fixes this problem, and we conclude the following.

\begin{corollary} \label{cor:ellipse-homology}
The homology class of $\hat{C}_4^0[\gamma(S^1)]\cap\qslq$  in $H_0(\qslq;\Z/2\Z)$ is 1. 
\end{corollary}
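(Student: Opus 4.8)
The plan is to read the corollary off \prop{ellipse-transverse} together with the boundary-disjointness of \lem{boundry-disjoint} and \prop{quotient}(4), using only the elementary description of $H_0$ with $\Z/2\Z$-coefficients. First I would record the dimension count in the relevant case $k=2$: the ambient manifold $\cfro$ has dimension $8$, the map $g$ of Equation~\ref{slq-eq} is a submersion along $\slq$ by \prop{slq is submanifold of r2}, so $\slq$ is $4$-dimensional, and $C_4^0[\gamma(S^1)]$ (four ordered points on a circle) is also $4$-dimensional; the free $\Z/4\Z$-action of \defn{cyclic} does not change dimensions, so $\hat{C}^0_4[\gamma(S^1)]$ and $\qslq$ sit as $4$-manifolds inside the $8$-manifold $\qcfro$. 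Their intersection therefore has expected dimension $0$, and by \prop{ellipse-transverse} the intersection is transverse, hence a genuine $0$-manifold.

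Next I would check that this $0$-manifold is compact and interior, so that it represents a well-defined class in $H_0(\qslq;\Z/2\Z)$ in the sense of mod-$2$ intersection theory (\cite{Guillemin:2010ti}). Since $S^1$ is compact, so is $C_4[S^1]$, hence so is its image $\hat{C}^0_4[\gamma(S^1)]$ in $\qcfr$; by \lem{boundry-disjoint} and \prop{quotient}(4) the sets $\hat{C}^0_4[\gamma(S^1)]$ and $\qslq$ are boundary-disjoint, so their intersection avoids $\bdry\qcfr$ and is a closed subset of the compact manifold $\hat{C}^0_4[\gamma(S^1)]$ — thus a finite set of interior points. By \prop{ellipse-transverse} this finite set is a single point, the unique square inscribed in the ellipse up to cyclic relabeling; and since it represents an honest nondegenerate square it lies in the component of $\qslq$ arising from the interior of $\slq$ (equivalently, it survives into the connected manifold-with-boundary of \prop{quotient}(3) obtained by deleting an $\epsilon$-neighborhood of the $(13)(24)$-face).

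Finally I would invoke the standard fact that a single point of a manifold represents the generator of the $\Z/2\Z$-summand of $H_0(\,\cdot\,;\Z/2\Z)$ attached to its path component; on the relevant (connected) component this class is exactly $1\in\Z/2\Z$, which is the assertion. I do not anticipate a real obstacle here, as all of the analytic content has already been carried out in \prop{slq is submanifold of r2}, \prop{ellipse-transverse}, and the appendix computation. The one point worth stressing — and the reason the corollary is not vacuous — is the contrast with $H_0(\slq;\Z/2\Z)$, where the four cyclically-relabeled copies of the ellipse's square give an \emph{even} count and hence the zero class: passing to the quotient by the free $\Z/4\Z$-action of \defn{cyclic} identifies these four points to a single point, converting the vanishing class into the generator.
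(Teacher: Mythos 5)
Your proposal is correct and takes essentially the same route the paper intends: the paper states the corollary as an immediate consequence of \prop{ellipse-transverse} (single transverse interior intersection point), the boundary-disjointness of \lem{boundry-disjoint}/\prop{quotient}, and the quotient-by-$\Z/4\Z$ discussion just preceding the statement, which is exactly what you spell out. Your added dimension count, the compactness/interiority check, and the careful remark that the class lands in the $\Z/2\Z$-summand of the component containing honest squares are reasonable elaborations of what the paper leaves implicit.
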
 

We are now ready to prove our version of the square-peg theorem. 

\begin{theorem} 
Take any regular, $C^\infty$-smooth embedding of a curve $\gamma\co S^1\hookrightarrow \R^k$. Then for all $\epsilon >0$, there is a $C^\infty$-open neighborhood of $\gamma$, in which there is, for all $m$, a $C^m$-dense set of smooth embeddings $\gamma'\co S^l\hookrightarrow \R^k$, 
 with $\|\hat{C}_4^0[\gamma'(S^1)]-\hat{C}_4^0[\gamma(S^1)]\|_0<\epsilon$, and $\hat{C}^0_4[\gamma'] \transverse\qslq$.  
Moreover,
$$
\hat{C}_4^0[\gamma'(S^1)] \cap \qslq = \{ \text{an odd, finite set of inscribed square-like quadrilaterals} \}.
$$
\label{thm:squarepeg}
\end{theorem}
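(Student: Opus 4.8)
The plan is to run the method of Section~\ref{sect:main-method} (Steps~0--4) with $M=S^1$, $N=\R^k$, $n=4$, but with the configuration spaces replaced throughout by the $\Z/4\Z$-quotients $\qcfr$, $Z=\qslq$ and $\hat{C}_4^0[\gamma(S^1)]$ constructed in Section~\ref{sect:cyclic}. Because the cyclic relabeling action is free, properly discontinuous, and acts on configuration spaces of embedded circles by isometries (\prop{quotient}), every step of the method descends from the labeled spaces to these quotients: a perturbation of $\gamma$ is still an embedding of $S^1$, so the induced map on $C_4$ is automatically $\Z/4\Z$-equivariant; transversality is a local condition and the quotient projection is a local diffeomorphism; and the $C^0$-estimates are preserved because $\mu$ is an isometry.

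Steps~0 and~1 are already in hand. By \thm{functor}, $\hat{C}_4^0[\gamma(S^1)]$ is a submanifold-with-corners of $\qcfr$. By \prop{slq is submanifold of r2} together with \prop{quotient}(3), the space $\qslq$ --- with an $\epsilon$-neighborhood of its $(13)(24)$ face removed, forced because $\qslq$ is not a manifold-with-boundary there --- is a submanifold of $\qcfr$ with $\bdry\qslq\subset\bdry\qcfr$; and by \lem{boundry-disjoint} and \prop{quotient}(4), $\qslq$ and $\hat{C}_4^0[\gamma(S^1)]$ are boundary-disjoint. For Step~2 I take the standard embedding $i\co S^1\hookrightarrow\R^2\subset\R^k$ to be the planar ellipse $x^2/a^2+y^2/b^2=1$ with $a>b$; \prop{ellipse-transverse} gives $\hat{C}_4^0[i(S^1)]\transverse\qslq$ with intersection a single point, and \cor{ellipse-homology} records that this point represents the nonzero class $1\in H_0(\qslq;\Z/2\Z)$.

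For Step~3 I run the argument of \cor{transversality} with $S^l=S^1$ and target $\slq\subset\cfr$, using the component $C_4^0[\gamma(S^1)]$ in place of the full configuration space and \lem{boundry-disjoint} for the boundary-disjointness hypothesis: for every $\epsilon>0$ there is a $C^\infty$-open neighborhood of $\gamma$ containing, for all $m$, a $C^m$-dense set of smooth embeddings $\gamma'$ with $C_4^0[\gamma']\transverse\slq$, with the boundaries of $\slq$ and $C_4^0[\gamma'(S^1)]$ disjoint in $\bdry\cfr$, and with $\|C_4^0[\gamma']-C_4^0[\gamma]\|_0<\epsilon$. Passing to the quotient by the (component-preserving) action $\mu$ yields $\hat{C}_4^0[\gamma']\transverse\qslq$, the boundary-disjointness, and the $C^0$-estimate of the theorem. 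For Step~4, by Haefliger's theorem (\thm{isotopy}, applied inside a high-dimensional $\R^K$ if $k\le 3$) the embeddings $\gamma'$ and $i$ lie in one path component of the space of embeddings of $S^1$; running the argument of \thm{homology} with the quotient spaces --- the induced homotopy on configuration spaces is again $\Z/4\Z$-equivariant, so it descends, and the Transversality Homotopy Extension theorem applies --- the mod-$2$ intersection numbers, equivalently the classes in $H_0(\qslq;\Z/2\Z)$, of $\hat{C}_4^0[\gamma'(S^1)]\cap\qslq$ and $\hat{C}_4^0[i(S^1)]\cap\qslq$ coincide.

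Assembling the steps proves the theorem: the intersection $\hat{C}_4^0[\gamma'(S^1)]\cap\qslq$ is transverse and $0$-dimensional, and by boundary-disjointness together with compactness of $\hat{C}_4^0[\gamma'(S^1)]$ it is a finite set of interior points; by Steps~2 and~4 its cardinality is $1$ modulo $2$, hence odd; and each point is, by \defn{slq}, an equivalence class under cyclic relabeling of a $4$-tuple on $\gamma'(S^1)$ with equal sides and equal diagonals --- an inscribed square-like quadrilateral, and an inscribed square when $k=2$. I expect the one genuine obstacle to be the bookkeeping around the $\Z/4\Z$-quotient: one must verify that the perturbation of \cor{transversality} and the Haefliger isotopy descend equivariantly, that restricting to the component $C_4^0$ keeps all the hypotheses in force --- in particular boundary-disjointness, which fails for the full $C_4[\gamma(S^1)]$ --- and that deleting a neighborhood of the $(13)(24)$ face loses no intersection points, which holds exactly because $\bdry\hat{C}_4^0[\gamma'(S^1)]$ avoids that face. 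Since the final count is taken mod~$2$, the orientation computations of \lem{cfr-orient} and \prop{orientation} are not needed for this particular theorem.
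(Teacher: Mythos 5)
Your proposal is correct and follows essentially the same route as the paper: the same Steps 0--4 applied to the quotient spaces $\qcfr$, $\qslq$, $\hat{C}_4^0[\gamma(S^1)]$, using the ellipse as base case, Corollary~\ref{cor:transversality} for the perturbation, and Haefliger plus the homology argument for the final mod-2 count. Your closing observations --- that boundary-disjointness forces working with $C_4^0$ rather than all of $C_4[\gamma(S^1)]$, that removing a neighborhood of the $(13)(24)$ face loses no intersections because $\bdry\hat{C}_4^0[\gamma'(S^1)]$ avoids it, and that the orientation computations of Lemma~\ref{lem:cfr-orient} and Proposition~\ref{prop:orientation} are not strictly needed for the mod-2 conclusion --- are accurate glosses that the paper leaves implicit.
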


As a reminder to the reader, we note that the $C^m$-dense set of smooth embeddings is with respect to the Whitney $C^\infty$-topology from Section~\ref{sect:transversality}.
 
\begin{proof}
We follow the method outlined in Section~\ref{sect:main-method} for \thm{main-theorem}.
\begin{compactenum}
\item[Step 0:] For any smooth embedding $\gamma\co S^1\hookrightarrow \R^k$, \prop{quotient} guarantees  $\hat{C}_4^0[\gamma(S^1)]$ is a submanifold of $\qcfr$. 

\item[Step 1:]  Propositions~\ref{prop:slq is submanifold of r2},~\ref{prop:orientation} and~\ref{prop:action} guarantee 
 $\qslq\cap\qcfro$ is a submanifold of $\qcfro$ with $\bdry\qslq\subset \bdry\qcfr$. 
\lem{boundry-disjoint} and \prop{quotient} show that $\hat{C}_4[\gamma(S^1)]$ and $\qslq$ are boundary-disjoint.

\item[Step 2:] Our standard embedding $i\co S^1\hookrightarrow \R^k$ is an ellipse. \prop{ellipse-transverse}, establishes the existence of a transverse intersection between $\hat{C}_4[i(S^1)]$ and $\qslq$ inside $\qcfro$. Corollary~\ref{cor:ellipse-homology} shows the homology class of the intersection is 1 in $H_0(\qslq;\Z/2\Z)$.

\item[Step 3:] Fix an $\epsilon>0$.  We can adjust the proof of our transversality theorem (Corollary~\ref{cor:transversality}) to apply to our setting. This gives us a $C^\infty$-open neighborhood of $\gamma$ in which there is, for all $m$, a $C^m$-dense set of  smooth embeddings $\gamma':S^1\hookrightarrow \R^k$, such that $\|\hat{C}[\gamma'] - \hat{C}_4[\gamma]\|<\epsilon$, and $\hat{C}^0_4[\gamma'] \transverse \qslq$, and for which $\bdry \qslq$  and $\bdry\hat{C}^0_4[\gamma'(S^1)]$ are disjoint in $\hat{C}^0_4[\R^k]$.

\item[Step 4:] We can adjust the proofs of \thm{isotopy} and \thm{homology} to our setting. This lets us conclude that the intersections $\hat{C}^0_4[i(S^1)] \cap \qslq$ and $\hat{C}^0_4[\gamma'(S^1)] \cap \qslq$  represent the same homology class in $\qslq$. 
\end{compactenum}

This means that the finite collection of points (0-manifold) $\qcfog \cap \qslq$ is cobordant by a 1-manifold to the single square in the initial ellipse in $\qslq$, and hence that the number of inscribed squares is odd.
\end{proof}

\begin{figure}
\hfill
\includegraphics[width=1.25in]{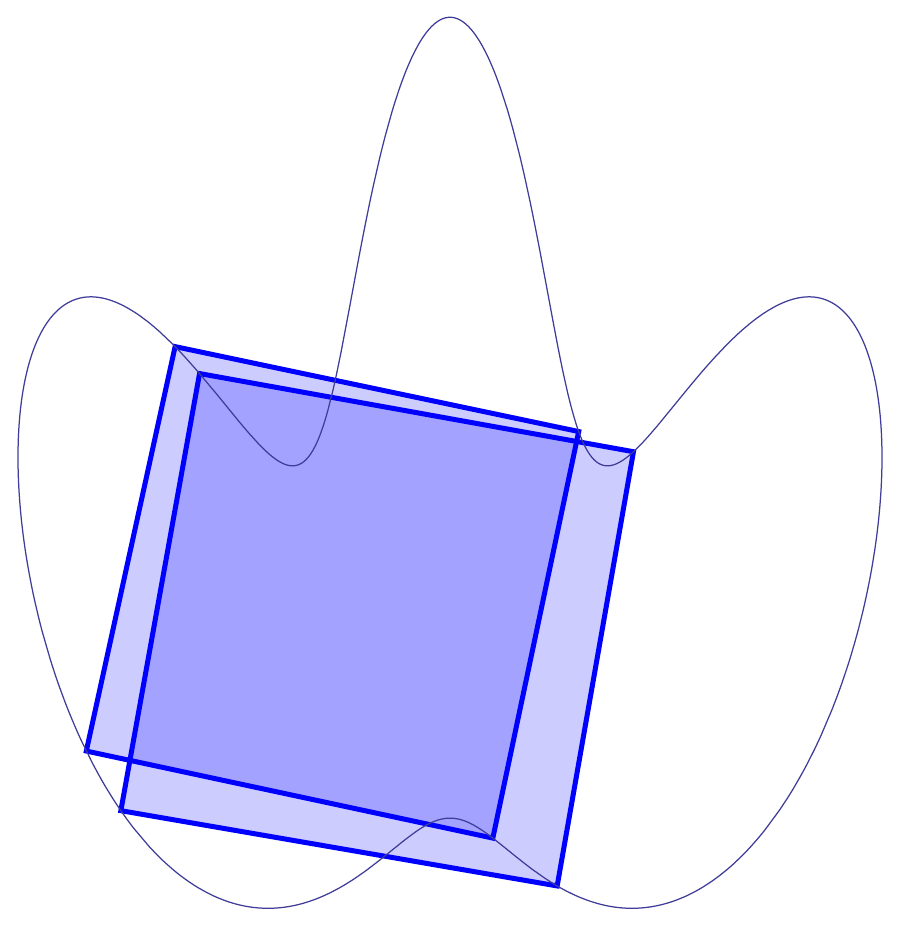}
\hfill
\includegraphics[width=1.25in]{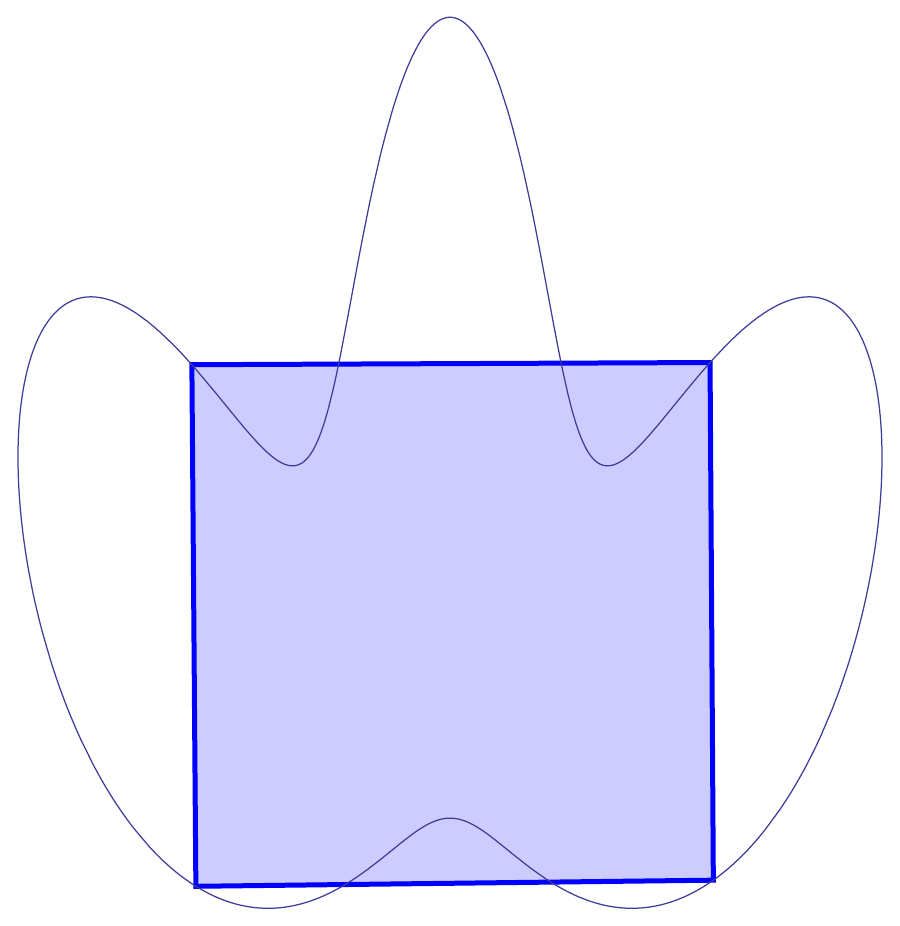}
\hfill
\includegraphics[width=1.25in]{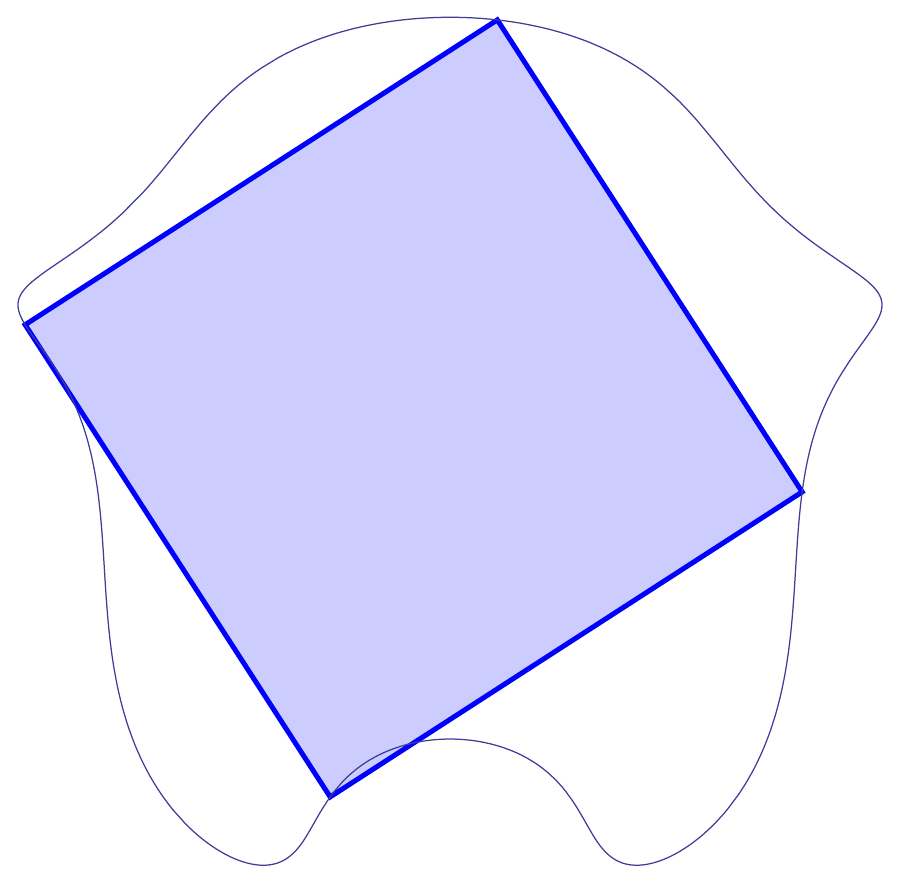}
\hfill
\includegraphics[width=1.25in]{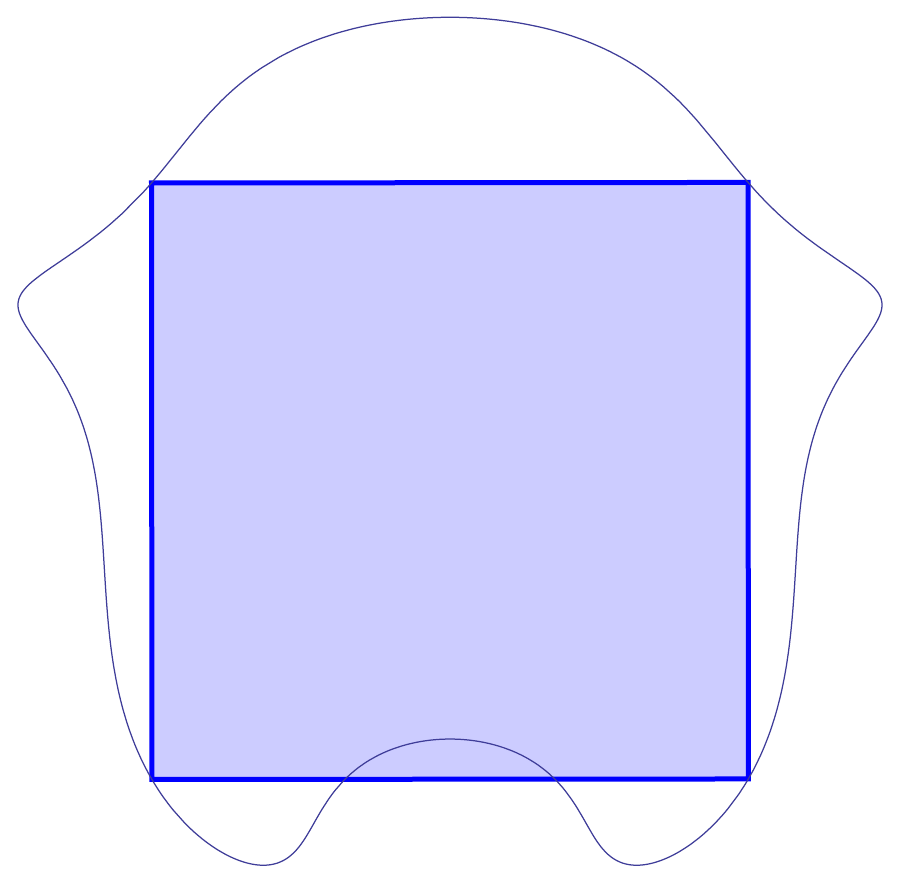}
\hfill
\hphantom{.}
\caption{This picture shows three of the five squares inscribed in an irregular three-lobed curve and two of the three squares inscribed in an irregular ``tooth-shaped'' curve. Since each family shares the vertical flip symmetry of each curve, we show the center (symmetric) square in the second and fourth pictures, while the first and third show half of the asymmetrical squares. While on the left curve the squares are fairly close together, a computer search reveals that they are certainly distinct.  
\label{fig:slq}}
\end{figure}

\thm{squarepeg} is illustrated by the 3 squares inscribed in an irregular curve shown on the left and center-left in Figure~\ref{fig:slq}. This curve has 5 squares in total (an additional 2 squares can be found using a vertical flip symmetry). The curve on the right and center right in  Figure~\ref{fig:slq} has 3 inscribed squares in total. We note that when $\cfog$ is not transverse to $\slq$ this count need not be odd. Indeed Popvassiliev~\cite{2008arXiv0810.4806P}, and F.~Sagols and R.~Mar\'in~\cite{MR2798015} have constructed, respectively, smooth convex curves, and piecewise linear curves which admit exactly $n$ inscribed squares. In addition, W.~van Heijst \cite{vanheijst2014algebraic} proved that any real algebraic curve of degree $n$ in $\R^2$ inscribes either infinitely many squares or at most $\nicefrac{n^4-5m^2+4m}{4}$ squares.

A few historical comments are in order here. First, this is certainly not the first proof of the square-peg theorem to use an intersection-theoretic approach. As previously mentioned Griffiths~\cite{MR1095236} took a similar approach, though he (wrongly) computes the intersection number. As a result, he claims to have proved not only the square-peg theorem but a ``rectangular-peg theorem''. The rectangular case does not admit the quotient-space simplification above (there are generally \emph{two} inscribed rectangles of a given aspect ratio in the ellipse). For a long while, the ``rectangular-peg theorem'' proved to be an open and difficult problem. However, this was recently solved by J.E.~Greene and A.~Lobb in \cite{MR4298749}. 
We also note that Matschke~\cite{2010arXiv1001.0186M} proved a version of the square-peg theorem from a theorem about loops of polygons inscribed in curves by arguing that a loop of rhombi which was invariant under the cyclic permutation contained a square by the intermediate value theorem, also an approach followed by L.G.~Schnirel'man~\cite{Schnirelman:1944wf}.  Additionally, in his PhD thesis \cite{phd-Matschke} Matschke claims a similar argument shows any regular $C^\infty$-smooth embedding of $S^1$ in $\R^k$ has an inscribed square-like quadrilateral.


\section{Future Directions}

We have already mentioned that we have applied the method given in Section~\ref{sect:main-method} to the question of inscribing constructible simplices in embedded spheres in \cite{Simplices}. These results generalize the results of M.D.~Meyerson~\cite{MR600575}, M.~Nielsen~\cite{MR1181760}, and others \cite{gupta2021inscribed, phd-Matschke}  on inscribing families of triangles in planar and spatial curves.

One of the recurring features of the method in this paper is that the introduction of compactified configuration spaces simplifies many of the tricky technical pieces in the proof by exporting the troublesome behavior to the boundaries.  For example, applying a transversality theorem to squares and configurations of inscribed quadrilaterals requires us to have some strategy for dealing with ``degenerate'' configurations. The extension of the $\pi_{ij}$ and $s_{ijk}$ data to the boundary of configuration space (with the associated metric) allowed us to argue easily that there could be no infinitesimal squares inscribed on a smooth curve. On the other hand, this is not the only way to address these difficulties: For instance, Stromquist~\cite{MR1045781} deals with basically the same problem by showing directly that there are no squares (or square-like quadrilaterals) smaller than some $\epsilon$ which can be inscribed on a curve with some mild smoothness assumptions and hence avoids the dangerous diagonals of the product space $(\R^k)^4$. We give a similar argument in \cite{FTCWC} to show:
\begin{theorem}[\cite{FTCWC}]
Let $\gamma\co S^1\hookrightarrow \R^n$ be an embedding of $S^1$ in $\R^n$. If $\gamma$ is in $\FTCWC$, then $\gamma$ has an inscribed square-like quadrilateral.
\label{thm:ftcwc}
\end{theorem}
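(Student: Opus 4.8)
The plan is to deduce Theorem~\ref{thm:ftcwc} from \thm{squarepeg} by a smoothing-and-compactness argument. The one new ingredient, not needed in the smooth case, is an a priori lower bound on the size of an inscribed square-like quadrilateral, an estimate in the spirit of Stromquist's; this is where finite total curvature and the absence of cusps do their work. The absence of cusps plays, in the finite-total-curvature setting, exactly the role that the regularity hypothesis plays throughout the smooth case: it keeps the local tangent cone of $\gamma$ from degenerating.

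First I would fix a sequence of regular $C^\infty$-embeddings $\gamma_j\co S^1\hookrightarrow\R^n$ with $\gamma_j\to\gamma$ uniformly and with total curvature uniformly bounded, $\limsup_j \kappa(\gamma_j)\le\kappa(\gamma)$. The standard construction works: inscribe in $\gamma$ polygons $P_j$ with mesh tending to $0$ whose vertex sets eventually resolve the finitely many corners of $\gamma$ carrying concentrated curvature; each $P_j$ is an embedded polygon with $\kappa(P_j)\le\kappa(\gamma)$ (the total curvature of $\gamma$ being the supremum of those of its inscribed polygons), and rounding the vertices of $P_j$ on a scale much smaller than the mesh gives a smooth embedding $\gamma_j$ with $\gamma_j\to\gamma$ in $C^0$ and $\kappa(\gamma_j)$ as close to $\kappa(P_j)$ as desired. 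Since $\gamma$ has no cusps, the exterior angles of the $P_j$ stay bounded away from $\pi$, and rounding preserves this; so there is a $\beta_0>0$ such that every $\gamma_j$ is, on every sufficiently short subarc, pinched inside a double cone whose two rays meet at an angle at least $\beta_0$. By \thm{squarepeg} the smooth embeddings carrying a nonempty family of inscribed square-like quadrilaterals are $C^\infty$-dense, so after moving each $\gamma_j$ by a $C^\infty$-perturbation of size $1/j$ — which disturbs none of the above — I may assume every $\gamma_j$ carries an inscribed square-like quadrilateral $Q_j=(\bfp_1^j,\bfp_2^j,\bfp_3^j,\bfp_4^j)$ with vertices in cyclic order along $\gamma_j$.

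The crux is to show there is $\epsilon>0$, depending only on the total-curvature bound and on $\beta_0$, such that no square-like quadrilateral of side length less than $\epsilon$ can be inscribed in any $\gamma_j$ (nor in $\gamma$). If not, we obtain inscribed square-like quadrilaterals of side length tending to $0$, hence, after passing to a subsequence, with all four vertices converging to a single point $\bfp$ of the curve. On ever shorter arcs around $\bfp$ the total curvature tends to $0$ apart from at most one concentrated corner of angle less than $\pi-\beta_0$, so after rescaling the four vertices converge to a configuration of four distinct points lying, in cyclic order, on a wedge of opening angle at least $\beta_0$ (or on a line, at a smooth point of $\gamma$) and satisfying $r_{124}=r_{231}=r_{342}=1$ and $r_{132}=r_{241}$. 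This is impossible, by the same elementary computation that underlies \lem{boundry-disjoint}: four collinear points satisfying the equal-sides relation are nearly equally spaced, and the equal-diagonals relation then contradicts the triangle inequality; and for four points distributed with one, two, or three on each ray of a positive-angle wedge, the law of cosines together with the equal-sides relation forces the side length to be $0$. Chasing the constants in this local analysis yields the uniform $\epsilon$. Equivalently, one can phrase this as: $C_4^0[\gamma(S^1)]$ is still a well-defined cycle in $C_4[\R^n]$, it remains boundary-disjoint from $\slq$ by \lem{boundry-disjoint}, and this disjointness is stable under the small $C^0$-perturbations above.

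With the uniform bound in hand the conclusion is soft. Passing to a further subsequence so that the parameter values of the four vertices converge, we get $Q_j\to Q=(\bfp_1,\bfp_2,\bfp_3,\bfp_4)$ with each $\bfp_i\in\gamma(S^1)$ (using $\gamma_j\to\gamma$ uniformly and continuity of $\gamma$); by the bound the side length of $Q$ is at least $\epsilon$, so the four vertices are distinct and the closed relations cutting out $\slq$ pass to the limit. Hence $Q$ is a genuine square-like quadrilateral inscribed in $\gamma$, which is the assertion. The only real obstacle is the middle step, the Stromquist-type uniform lower bound on the size of an inscribed square-like quadrilateral on a curve of bounded total curvature without cusps; everything else is density (from \thm{squarepeg}), compactness, and continuity.
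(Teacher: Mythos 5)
The paper does not actually prove Theorem~\ref{thm:ftcwc}: it is stated as a citation to the companion paper~\cite{FTCWC}, and the only description given of that proof is in the surrounding text of Section~5. That said, the description is informative enough to judge your proposal, and your approach agrees with it. The paper explicitly says that the argument in~\cite{FTCWC} is ``a similar argument'' to Stromquist's --- namely, showing directly that there are no square-like quadrilaterals of side length below some $\epsilon$ inscribed on a curve in the relevant regularity class --- and separately flags that ``this result is obtained by a limit argument,'' which is why the parity count from \thm{squarepeg} does not survive (``we cannot rule out the possibility that several squares come together in the limit''). Your proposal (smooth approximations of uniformly bounded total curvature without cusp-like degeneration, perturbation into the dense set of \thm{squarepeg} to harvest one inscribed square-like quadrilateral per approximant, a Stromquist-type uniform lower bound on side length, and then a compactness limit) is precisely this strategy, and you correctly identify the lower bound as the crux and correctly identify where the no-cusps hypothesis enters.

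Two cautions, one minor and one more substantive. The minor one: your ``equivalently'' reformulation at the end of the middle paragraph, that $C_4^0[\gamma(S^1)]$ remains boundary-disjoint from $\slq$ by \lem{boundry-disjoint}, overreaches --- \lem{boundry-disjoint} as stated and proved in the paper uses the smoothness of $\gamma$ to conclude that infinitesimal configurations on $\gamma$ are collinear, and $C_4^0[\gamma(S^1)]$ is not even a manifold-with-corners for a merely FTCWC curve, so one cannot literally invoke it; the direct Stromquist-type estimate you give first is the correct formulation. The more substantive one: the wedge-case analysis (``one, two, or three on each ray\dots the law of cosines\dots forces the side length to be~$0$'') is the entire technical content of the theorem, and your sketch of it is quite thin --- for instance, a priori the four rescaled points need not lie exactly on two rays, and the cyclic-ordering constraints along the curve interact nontrivially with how the vertices can be distributed across the corner. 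This is exactly the delicate estimate the paper attributes to~\cite{FTCWC}, so a proof at this level of detail would not be complete, but as an outline your proposal captures the intended argument.
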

We note that since this result is obtained by a limit argument, we cannot rule out the possibility that several squares come together in the limit to leave an even number of squares inscribed in the final curve, as in the examples of \cite{2008arXiv0810.4806P, MR2798015, vanheijst2014algebraic}. The appeal of this result is that it is a generalization of the square-peg problem to embedded space-curves, and that the class of curves of finite total curvature is a well-understood space~(cf.~\cite{math.GT/0606007}). When we set $n=2$, we recover the square-peg result. The regularity class of curves in \thm{ftcwc} is similar in flavor to the curves of low regularity for square-pegs given by Stromquist \cite{MR1045781}, Matschke \cite{2010arXiv1001.0186M,MR3184501}, and T.~Tao~\cite{MR3731730}, but generalizes to higher dimensions.

A very interesting possible extension of the methods here would be to use the 1-jet version of multijet transversality to try to prove a transversality theorem for submanifolds of configuration spaces which do intersect in certain boundary faces. Doing so would allow one to extend the ``counting'' and homology arguments above to detect boundary intersections between submanifolds of configuration spaces. For example, one might try to argue in this way that the space of triangles with a given angle inscribed in a curve had the homology of the torus, keeping in mind that a circle's worth of such ``triangles'' would be expected to be chords meeting the tangent to the curve in the specified angle.
Another interesting use for such a theorem would be to try to extend these theorems to immersed curves with normal crossings (as opposed to simply studying embedded curves). 

We have proved that the space of smooth curves with an odd number of squares are dense among smooth curves in the plane (or residual among smooth curves). This is not quite the same as proving that a ``generic'' smooth curve has an odd number of inscribed squares. It would be very interesting to try to extend these results to a set of curves which was full-measure among plane curves according to some natural measure on curves, as F.~Morgan does in~\cite{1978InMat..45..253M} for space curves bounding a unique area-minimizing surface.


\section*{Acknowledgments}
The authors would like to first thank Gerry Dunn who introduced us to the problem. We would also like to thank the people who have discussed the problem with us over the years: Jordan Ellenberg, Richard Jerrard, Rob Kusner, Benjamin Matschke,  Igor Pak, Strashimir Popvassiliev, John M. Sullivan, Cliff Taubes, and Gunter Ziegler.

\bibliography{ngons}{}

\begin{thebibliography}{10}

\bibitem{MR3810027}
Arseniy Akopyan and Sergey Avvakumov.
\newblock Any cyclic quadrilateral can be inscribed in any closed convex smooth
  curve.
\newblock {\em Forum Math. Sigma}, 6:Paper No. e7, 9, 2018.

\bibitem{MR3016979}
Arseniy Akopyan and Roman Karasev.
\newblock Inscribing a regular octahedron into polytopes.
\newblock {\em Discrete Math.}, 313(1):122--128, 2013.

\bibitem{MR4061975}
Jai Aslam, Shujian Chen, Florian Frick, Sam Saloff-Coste, Linus Setiabrata, and
  Hugh Thomas.
\newblock Splitting loops and necklaces: variants of the square peg problem.
\newblock {\em Forum Math. Sigma}, 8:Paper No. e5, 16, 2020.

\bibitem{MR1258919}
Scott Axelrod and Isadore~M. Singer.
\newblock Chern-{S}imons perturbation theory. {II}.
\newblock {\em J. Differential Geom.}, 39(1):173--213, 1994.

\bibitem{MR2515779}
Pavle V.~M. Blagojevi\'{c} and G\"{u}nter~M. Ziegler.
\newblock Tetrahedra on deformed spheres and integral group cohomology.
\newblock {\em Electron. J. Combin.}, 16(2):Research Paper 16, 11, 2009.
\newblock Special volume in honor of Anders Bj\"{o}rner.

\bibitem{newkey118}
Ryan Budney, James Conant, Kevin~P. Scannell, and Dev~P. Sinha.
\newblock {New perspectives on self-linking}.
\newblock {\em Adv. Math.}, 191(1):78--113, 2005.

\bibitem{Simplices}
Jason Cantarella, Elizabeth Denne, and John McCleary.
\newblock Families of similar simplices inscribed in most smoothly embedded
  spheres, Preprint 2021.

\bibitem{FTCWC}
Jason Cantarella, Elizabeth Denne, and John McCleary.
\newblock Square-like quadrilaterals inscribed in embedded space curves,
  Preprint 2021.

\bibitem{MR1506193}
Arnold Emch.
\newblock Some {P}roperties of {C}losed {C}onvex {C}urves in a {P}lane.
\newblock {\em Amer. J. Math.}, 35(4):407--412, 1913.

\bibitem{MR1506239}
Arnold Emch.
\newblock On the {M}edians of a {C}losed {C}onvex {P}olygon.
\newblock {\em Amer. J. Math.}, 37(1):19--28, 1915.

\bibitem{MR1506274}
Arnold Emch.
\newblock On {S}ome {P}roperties of the {M}edians of {C}losed {C}ontinuous
  {C}urves {F}ormed by {A}nalytic {A}rcs.
\newblock {\em Amer. J. Math.}, 38(1):6--18, 1916.

\bibitem{MR1259368}
William Fulton and Robert MacPherson.
\newblock A compactification of configuration spaces.
\newblock {\em Ann. of Math. (2)}, 139(1):183--225, 1994.

\bibitem{Golubitsky:1974iu}
Martin Golubitsky and Victor Guillemin.
\newblock {\em Stable mappings and their singularities}.
\newblock Springer-Verlag, New York, 1973.
\newblock Graduate Texts in Mathematics, Vol. 14.

\bibitem{greene2020cyclic}
Joshua~Evan Greene and Andrew Lobb.
\newblock Cyclic quadrilaterals and smooth {J}ordan curves, 2020.

\bibitem{MR4298749}
Joshua~Evan Greene and Andrew Lobb.
\newblock The rectangular peg problem.
\newblock {\em Ann. of Math. (2)}, 194(2):509--517, 2021.

\bibitem{MR1095236}
H.~Brian Griffiths.
\newblock The topology of square pegs in round holes.
\newblock {\em Proc. London Math. Soc. (3)}, 62(3):647--672, 1991.

\bibitem{Guillemin:2010ti}
Victor Guillemin and Alan Pollack.
\newblock {\em Differential topology}.
\newblock AMS Chelsea Publishing, Providence, RI, 2010.
\newblock Reprint of the 1974 original.

\bibitem{gupta2021inscribed}
Aryaman Gupta and Simon Rubinstein-Salzedo.
\newblock Inscribed triangles of {J}ordan curves in $\mathbb{R}^{n}$, 2021.

\bibitem{Haefliger:1961wr}
Andr{\'e} Haefliger.
\newblock Differentiable imbeddings.
\newblock {\em Bull. Amer. Math. Soc.}, 67:109--112, 1961.

\bibitem{Hirsch}
Morris~W. Hirsch.
\newblock {\em Differential topology}, volume~33 of {\em Graduate Texts in
  Mathematics}.
\newblock Springer-Verlag, New York, 1994.
\newblock Corrected reprint of the 1976 original.

\bibitem{hugelmeyer2018smooth}
Cole Hugelmeyer.
\newblock Every smooth {J}ordan curve has an inscribed rectangle with aspect
  ratio equal to $\sqrt{3}$, 2018.

\bibitem{MR4298748}
Cole Hugelmeyer.
\newblock Inscribed rectangles in a smooth {J}ordan curve attain at least one
  third of all aspect ratios.
\newblock {\em Ann. of Math. (2)}, 194(2):497--508, 2021.

\bibitem{MR7267}
Shizuo Kakutani.
\newblock A proof that there exists a circumscribing cube around any bounded
  closed convex set in {$R^3$}.
\newblock {\em Ann. of Math. (2)}, 43:739--741, 1942.

\bibitem{MR1133201}
Victor Klee and Stan Wagon.
\newblock {\em Old and new unsolved problems in plane geometry and number
  theory}.
\newblock The Dolciani Mathematical Expositions, 11. Mathematical Association
  of America, 1991.

\bibitem{MR1703205}
Greg Kuperberg.
\newblock Circumscribing constant-width bodies with polytopes.
\newblock {\em New York J. Math.}, 5:91--100, 1999.

\bibitem{MR2038265}
V.~V. Makeev.
\newblock {On quadrangles inscribed in a closed curve and the vertices of the
  curve}.
\newblock {\em Zap. Nauchn. Sem. S.-Peterburg. Otdel. Mat. Inst. Steklov.
  (POMI)}, 299(Geom. i Topol. 8):241--251, 331, 2003.

\bibitem{MR2307358}
V.~V. Makeev.
\newblock Inscribed and circumscribed polyhedra for a convex body and a problem
  on continuous functions on a sphere in {E}uclidean space.
\newblock {\em Algebra i Analiz}, 18(6):187--204, 2006.

\bibitem{2010arXiv1001.0186M}
Benjamin Matschke.
\newblock {On the Square Peg Problem and some Relatives}.
\newblock {\em arXiv.org}, math.MG:186, December 2009.

\bibitem{phd-Matschke}
Benjamin Matschke.
\newblock {\em Equivariant topology methods in discrete geometry}.
\newblock PhD thesis, Freie Universit\"at, 2011.

\bibitem{MR3184501}
Benjamin Matschke.
\newblock A survey on the square peg problem.
\newblock {\em Notices Amer. Math. Soc.}, 61(4):346--352, 2014.

\bibitem{matschke2020quadrilaterals}
Benjamin Matschke.
\newblock Quadrilaterals inscribed in convex curves.
\newblock {\em Trans. Amer. Math. Soc.}, 374(8):5719--5738, 2021.

\bibitem{MR600575}
Mark~D. Meyerson.
\newblock Equilateral triangles and continuous curves.
\newblock {\em Fund. Math.}, 110(1):1--9, 1980.

\bibitem{1978InMat..45..253M}
Frank Morgan.
\newblock {Almost Every Curve in $\mathbb{R}^3$ Bounds a Unique Area Minimizing
  Surface.}
\newblock {\em Inventiones Mathematicae}, 45:253, 1978.

\bibitem{Munkres}
James~R. Munkres.
\newblock {\em Topology: a first course}.
\newblock Prentice-Hall, Inc., Englewood Cliffs, N.J., 1975.

\bibitem{MR1181760}
Mark~J. Nielsen.
\newblock Triangles inscribed in simple closed curves.
\newblock {\em Geom. Dedicata}, 43(3):291--297, 1992.

\bibitem{Pak-Discrete-Poly-Geom}
Igor Pak.
\newblock {\em Lectures on Discrete and Polyhedral Geometry}.
\newblock Free online text. 2010.

\bibitem{2008arXiv0810.4806P}
Strashimir~G. Popvassilev.
\newblock {On the number of inscribed squares of a simple closed curve in the
  plane}.
\newblock {\em arXiv.org}, 0810:4806, October 2008.

\bibitem{MR2798015}
Feli\'{u} Sagols and Ra\'{u}l Mar\'{\i}n.
\newblock Two discrete versions of the inscribed square conjecture and some
  related problems.
\newblock {\em Theoret. Comput. Sci.}, 412(15):1301--1312, 2011.

\bibitem{Schnirelman:1944wf}
L.~G. Schnirel'man.
\newblock On certain geometrical properties of closed curves. (russian).
\newblock {\em Uspehi Matem. Nauk}, 10:34--44, 1944.

\bibitem{MR4142923}
Richard~Evan Schwartz.
\newblock A trichotomy for rectangles inscribed in {J}ordan loops.
\newblock {\em Geom. Dedicata}, 208:177--196, 2020.

\bibitem{schwartz2018rectangle}
Richard~Evan Schwartz.
\newblock Inscribed rectangle coincidences.
\newblock {\em Adv. Geom.}, 21(3):313--324, 2021.

\bibitem{newkey119}
Dev~P. Sinha.
\newblock {Manifold-theoretic compactifications of configuration spaces}.
\newblock {\em Selecta Math. (N.S.)}, 10(3):391--428, 2004.

\bibitem{MR137124}
Stephen Smale.
\newblock Generalized {P}oincar\'{e}'s conjecture in dimensions greater than
  four.
\newblock {\em Ann. of Math. (2)}, 74:391--406, 1961.

\bibitem{MR1045781}
Walter Stromquist.
\newblock Inscribed squares and square-like quadrilaterals in closed curves.
\newblock {\em Mathematika}, 36(2):187--197, 1989.

\bibitem{math.GT/0606007}
John~M. Sullivan.
\newblock Curves of finite total curvature.
\newblock In {\em Discrete {D}ifferential {G}eometry}, volume~38 of {\em
  Oberwolfach Semin.}, pages 137--161. Birkh\"auser, Basel, 2008.

\bibitem{MR3731730}
Terence Tao.
\newblock An integration approach to the {T}oeplitz square peg problem.
\newblock {\em Forum Math. Sigma}, 5:Paper No. e30, 63, 2017.

\bibitem{Toeplitz}
Otto Toeplitz.
\newblock Ueber einige aufgaben der analysis situs.
\newblock {\em Verhandlugen Der SchwizerischeAn Naturfoschenden Gesellshaft in
  Solothurn}, 4:197, 1922.

\bibitem{vanheijst2014algebraic}
Wouter van Heijst.
\newblock The algebraic square peg problem.
\newblock Master's thesis, Aalto University, 2014.

\bibitem{MR2300426}
Ismar Voli\'{c}.
\newblock A survey of {B}ott-{T}aubes integration.
\newblock {\em J. Knot Theory Ramifications}, 16(1):1--42, 2007.

\bibitem{MR2823976}
Sini\v{s}a~T. Vre\'{c}ica and Rade~T. \v{Z}ivaljevi\'{c}.
\newblock Fulton-{M}ac{P}herson compactification, cyclohedra, and the polygonal
  pegs problem.
\newblock {\em Israel J. Math.}, 184:221--249, 2011.

\bibitem{MR1503303}
Hassler Whitney.
\newblock Differentiable manifolds.
\newblock {\em Ann. of Math. (2)}, 37(3):645--680, 1936.

\bibitem{MR104272}
Wen-ts\"{u}n Wu.
\newblock On the isotopy of {$C^{r}$}-manifolds of dimension {$n$} in euclidean
  {$(2n+1)$}-space.
\newblock {\em Sci. Record (N.S.)}, 2:271--275, 1958.

\end{thebibliography}
\bibliographystyle{plain}

\appendix

\section{Structure of the boundary of $\slq$}\label{sect:bdry-slq}
Here, we give further results about the structure of the boundary of $\slq$, which we recall lies in the  $(1234)$ and $(13)(24)$ faces of $\bdry \cfr$. \begin{proposition}
\label{prop:slq-boundary}
Each of the boundary $(1234)$ and $(13)(24)$ faces of $\slq$ is a submanifold of $\cfr$.
\end{proposition}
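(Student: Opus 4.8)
The plan is to reuse the map $g\co\cfr\to\R^4$ from the proof of \prop{slq is submanifold of r2}, namely $g(\p)=(r_{124}^2,r_{231}^2,r_{342}^2,r_{132}^2-r_{241}^2)$, for which $\slq=g^{-1}(1,1,1,0)$. Since $r_{ijl}=\tan(\tfrac{\pi}{2}s_{ijl})$ is finite at every point of $\slq$ (three of the five relevant ratios are $1$ and the other two equal the common diagonal/side ratio, a positive real), and since the $s_{ijl}$ are smooth on each face of $\bdry\cfr$ by \thm{configgeom}(3), the map $g$ restricts to a smooth map $g_\mathcal{P}\co S_\mathcal{P}\to\R^4$ on a neighbourhood in $S_\mathcal{P}$ of each point of $\slq\cap S_\mathcal{P}$. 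Then $\slq\cap S_\mathcal{P}=g_\mathcal{P}^{-1}(1,1,1,0)$, so, exactly as in \prop{slq is submanifold of r2}, it suffices to check that $g_\mathcal{P}\transverse(1,1,1,0)$ for $\mathcal{P}=(1234)$ and $\mathcal{P}=(13)(24)$ and then invoke the Preimage Theorem of \cite{Guillemin:2010ti}; to obtain the statement for the \emph{closed} faces I would run the same check on each deeper stratum contained in $S_{(1234)}$ and $S_{(13)(24)}$.

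For the $(13)(24)$ face I would first observe that $g$ is in fact \emph{identically} $(1,1,1,0)$ on it, so nothing needs to be perturbed. On the open stratum $S^o_{(13)(24)}$ the points $\bfp_1,\bfp_3$ sit at a common macro-point $\bfq_{13}$ and $\bfp_2,\bfp_4$ at a common macro-point $\bfq_{24}$ with $\bfq_{13}\neq\bfq_{24}$; hence each of the four ``sides'' $\norm{\bfp_1-\bfp_2},\norm{\bfp_2-\bfp_3},\norm{\bfp_3-\bfp_4},\norm{\bfp_4-\bfp_1}$ equals $\norm{\bfq_{13}-\bfq_{24}}$, forcing $r_{124}=r_{231}=r_{342}=1$, while the two ``diagonals'' $\norm{\bfp_1-\bfp_3},\norm{\bfp_2-\bfp_4}$ collapse at a strictly faster rate, forcing $r_{132}=r_{241}=0$. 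The same holds on the one deeper stratum $S^o_{((13)(24))}\subset S_{(13)(24)}$. Thus $\slq\cap S_{(13)(24)}$ is the \emph{entire} closed face $S_{(13)(24)}$ of the manifold-with-corners $\cfr$, which is a submanifold-with-corners of it. (This also clarifies the remark following \prop{orientation}: being a whole codimension-two face, this part of $\bdry\slq$ is trivially a submanifold, but pinning down its orientation under the $\Z/4\Z$-action is what requires the extra computation mentioned there.)

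The real content is in the $(1234)$ face, where $\slq$ is a proper subset. I would fix a point $\p\in\slq\cap S^o_{(1234)}$ --- an ``infinitesimal square-like quadrilateral'' sitting over some $\bfx_0\in\R^k$ --- and choose local coordinates on $S^o_{(1234)}$ that split off the $\R^k$ of macro-points from the ``screen'' recording the four infinitesimally separated points modulo translation and scaling. As in \prop{slq is submanifold of r2} there are two cases: the screen configuration is a planar square (then, after normalising, it is inscribed in an infinitesimal ellipse $x^2/a^2+y^2/b^2=1$, $a>b$) or a non-planar quadrilateral. In either case the variational vector fields $\h_1,\dots,\h_4$ constructed in the proof of \prop{slq is submanifold of r2}, each of which moves a single vertex, have obvious ``screen'' analogues that fix $\bfx_0$ (hence are tangent to $S^o_{(1234)}$) and remain linearly independent; and because $g$ depends only on the scale-invariant ratios $r_{ijl}$, the matrix of $Dg_{(1234)}$ on $\Span\{\h_1,\h_2,\h_3,\h_4\}$ is exactly the one computed in \prop{slq is submanifold of r2}, with determinant $\tfrac{8(a^4-b^4)}{a^2b^2}>0$ in the planar case and $1$ in the non-planar case. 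Hence $g_{(1234)}\transverse(1,1,1,0)$, so $\slq\cap S^o_{(1234)}$ is a submanifold of $S^o_{(1234)}$, and therefore of $\cfr$; running the identical computation on each deeper stratum $S^o_\mathcal{Q}$ with $\mathcal{Q}>(1234)$ that meets $\slq$ (the $r_{ijl}$ are now limiting ratios, but the variational fields come from the same recipe) upgrades this to: $\slq\cap S_{(1234)}$ is a submanifold-with-corners of $\cfr$.

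The one step I expect to be genuinely fiddly --- and the reason these details are in an appendix rather than the main text --- is the bookkeeping on the infinitesimal strata: writing down honest local coordinates on $S^o_{(1234)}$ and its corner strata, checking that the scaled-down variational fields are tangent to the stratum (and not merely to a sub-face of it), and confirming that the $r_{ijl}$ stay finite and smooth there, so that the determinant computations of \prop{slq is submanifold of r2} transcribe verbatim. No new geometric idea is required, and since \lem{boundry-disjoint} already shows $\bdry\slq$ and $\bdry C_4^0[\gamma(S^1)]$ are disjoint, none of this is needed elsewhere in the paper.
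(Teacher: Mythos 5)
Your treatment of the $(1234)$ face is essentially the paper's: restrict $g$ to the face, use the same variational fields, get the same $4\times 4$ determinant, invoke the Preimage Theorem. That part is fine.

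The $(13)(24)$ face is where your proposal goes wrong. Your observation that $g\equiv(1,1,1,0)$ on the entire $(13)(24)$ stratum is \emph{correct} (on that face $\bfp_1=\bfp_3$, $\bfp_2=\bfp_4$ forces $r_{124}=r_{231}=r_{342}=1$, and $s_{132}=s_{241}=0$ forces $r_{132}=r_{241}=0$). But the conclusion you draw from it --- that $\slq\cap S_{(13)(24)}$ is the whole closed face --- is not what the paper's $\slq$ is. The $(13)(24)$ portion of $\slq$ is the \emph{proper} subset where the collapse directions $\pi_{13}$ and $\pi_{24}$ are perpendicular to each other and to the macro-direction $\pi_{14}$ (these are precisely the limits of the interior condition, since by \prop{slq-v2} every interior square-like quadrilateral already has perpendicular diagonals). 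The constancy of $g$ on this stratum is exactly the \emph{obstruction} the paper flags: ``this information is not given by our defining map $g$ and we will be unable to show that $g$ is transverse on this face.'' A constant map can't be transverse to a regular value in any useful way; the Preimage Theorem tells you nothing here. The paper's entire point on this face is to replace $g$ with the dot-product map $f(\p)=\bigl((\pi_{14}+\pi_{34})\cdot\pi_{13},\,(\pi_{41}+\pi_{21})\cdot\pi_{24},\,\pi_{13}\cdot\pi_{24},\,r_{132}^2-r_{241}^2\bigr)$, prove (\prop{slq-v2}) that $f^{-1}(0,0,0,0)$ defines $\slq$, restrict it to $\hat f$ on the face, and run a fresh transversality computation with new tangent vectors rotating $\pi_{13}$ and $\pi_{24}$. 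None of that appears in your argument, so the $(13)(24)$ half of the proposition is unproved. (A quick sanity check that exposes the issue: for $k=2$, your reading would put the entire codimension-2 face $S_{(13)(24)}\subset\bdry C_4[\R^2]$ inside $\slq$, which is far bigger than the ``four-fold covers of an interval with $\pi_{13}\perp\pi_{24}$'' that the paper identifies, and is not what \prop{orientation}'s remark or \lem{boundry-disjoint} describe.)
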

\begin{proof} 
Let us consider the $(1234)$ boundary face, where both the sidelengths and diagonals of the square-like quadrilateral vanish.  Following Sinha~\cite{newkey119} (Theorems 3.12 and 3.14), the boundary face  $(1234)$ is diffeomorphic to the manifold $\R^k\times \tilde{C}_4(\R^k) \times \{0\}$, where $\tilde{C}_4(\R^k)$ corresponds to configurations of 4 points up to scaling and translation. 
We are then able to smoothly extend the ratios in the definition of ${g}$ (Equation~\ref{slq-eq}) to the boundary. Let $(\bfp, \bfy_1, \bfy_2, \bfy_3,\bfy_4)\in \R^k\times \tilde{C}_4(\R^k)$, and without loss of generality assume $\bfp$ is the center of mass of the configuration, and the vectors $\sum_i\bfy_i = 4\bfp$. It is straightforward to see that $r_{ijl}^2$ becomes $\tilde{r}_{ijl}^2 = \frac{ \norm{ \bfy_i-\bfy_j}}{\norm{\bfy_i-\bfy_l} }$.  Thus the arguments in the proof of \prop{slq is submanifold of r2} carry over directly, and $g$ is transverse to $(1,1,1,0)$ on this boundary face. Hence $\slq$ is a submanifold on the $(1234)$ boundary face. 

On the $(13)(24)$ boundary face, the square-like quadrilaterals are four-fold covers of an interval with $\pi_{13}$ perpendicular to $\pi_{24}$. This information is not given by our defining map ${g}$ (Equation~\ref{slq-eq}) and we will be unable to show that $g$ is transverse on this face.  We solve this problem by finding a different map that defines $\slq$, and that is also transverse on the $(13)(24)$ face. We define the map $f:\cfr\rightarrow \R^4$ by
\begin{equation} f(\p) = \left( (\pi_{14}+\pi_{34})\cdot \pi_{13}, \ (\pi_{41}+\pi_{21})\cdot\pi_{24}, \ \pi_{13}\cdot\pi_{24}, \ r^2_{132}-r^2_{241}\right).
\end{equation}\label{eq:slq-v2}
In \prop{slq-v2} (in Appendix~\ref{sect:slq2}) we prove that $\slq = f^{-1}(0,0,0,0)$. Note that a point in $\slq$ in the $(13)(24)$ face is captured by $\p=(\bfp_1=\bfp_3, \bfp_2=\bfp_4, \pi_{13}, \pi_{24})\in\R^k\times\R^k\times S^{k-1}\times S^{k-1}$. We are only  interested in $(13)(24)$ face, so we make appropriate adjustments to $f$ and restrict it to become $\hat{f}:\cfr\rightarrow \R^3$  defined by 
$$\hat{f}(\p) = (\hat{f}_1(\p), \hat{f}_2(\p), \hat{f}_3(\p)) =  \left( (2\pi_{14})\cdot \pi_{13},\  (2\pi_{41})\cdot\pi_{24},\  \pi_{13}\cdot\pi_{24}\right).$$ 
By design, $\slq=\hat{f}^{-1}(0,0,0)$ on the $(13)(24)$ face.  In order to use the Preimage Theorem of \cite{Guillemin:2010ti}, we follow the ideas behind the proof of \prop{slq is submanifold of r2}, and find three tangent vectors to $\cfr$ on which it is easy to show $D\hat{f}$ has three linearly independent rows.  We first compute a typical column of $D\hat{f}$, where we have differentiated with respect to a vector $\cv$.
\begin{equation}\label{eq:Df}
\begin{pmatrix} D_{\cv}(2\pi_{14})\cdot \pi_{13} + (2\pi_{14})\cdot D_{\cv}\pi_{13}
\\  D_{\cv}(2\pi_{41})\cdot \pi_{24} + (2\pi_{41})\cdot D_{\cv}\pi_{24}
\\ D_{\cv}(\pi_{13})\cdot \pi_{24} + (\pi_{13})\cdot D_{\cv}\pi_{24}
\end{pmatrix}
\end{equation}

\begin{center}
\begin{figure}[htbp]
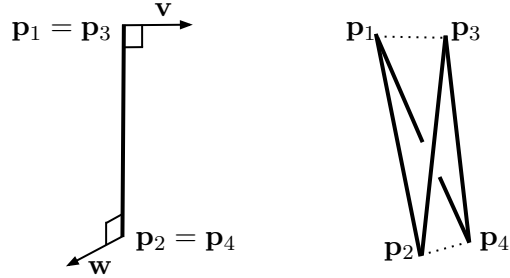

\hfill
\begin{overpic}{Slq-boundary}
\put(19,26){$\bfp_1=\bfp_3$}
\put(34,28){$\bfv$}
\put(32,4){$\bfp_2=\bfp_4$}
\put(27,1){$\bfw$}
\put(54, 26){$\bfp_1$}
\put(58,3){$\bfp_2$}
\put(65,26){$\bfp_3$}
\put(68,4){$\bfp_4$}
\end{overpic}
\caption{On the left a point $\p$ in the $(13)(24)$ face of $\bdry\slq$. The vectors $\bfv$ and $\bfw$ are the variations which move $\p$ to a square-like quadrilateral in the interior of $\slq$. This is shown on the right. The vectors $\bfv$ and $\bfw$ are perpendicular to each other, and also to the line (of symmetry) through $\bfp_1=\bfp_3$ and $\bfp_2=\bfp_4$.
}
\label{fig:slq-boundary}
\end{figure}
\end{center}

The definition of $\slq$ on the $(13)(24)$ face guarantees that the line segment $\bfp_1\bfp_4$ is perpendicular to both $\pi_{13}$ and $\pi_{24}$.  The tangent vectors to $\slq$ on the $(13)(24)$ boundary face may be represented by a vector $\cv=(\bfv_1,\bfv_2,\bfv_3,\bfv_4)$, where $\bfv_1,\bfv_2\in\R^k$ are tangent vectors to $\bfp_1$ and $\bfp_4$ respectively, and $\bfv_3,\bfv_4\in S^{k-1}$ are tangent vectors to $\pi_{13}$ and $\pi_{24}$ respectively.  
Our first tangent vector is $\cv_1=(\bfzero,\bfzero,\frac{1}{2}\pi_{14}, \bfzero)$, which moves $\pi_{13}$ in the $\pi_{14}$ direction (perpendicular to both $\pi_{13}$ and $\pi_{24}$). Thus $D_{\cv_1}\pi_{13}=\frac{1}{2}\pi_{14}$ and the directional derivatives of the other $\pi_{ij}$ are 0. A short computation using Equation~\ref{eq:Df} shows $D_{\cv_1}\hat{f}_1 = \bfzero\cdot\pi_{13} + (2\pi_{14})\cdot(\frac{1}{2}\pi_{14} )= 1$, $D_{\cv_1}\hat{f}_2=0$, and $D_{\cv_1}\hat{f}_3 = (\frac{1}{2}\pi_{14})\cdot\pi_{24} + \pi_{13}\cdot\bfzero = 0$. We repeat this computation for our second tangent vector $\cv_2=(\bfzero,\bfzero,\bfzero,\frac{1}{2}\pi_{41})$  which moves $\pi_{24}$ in the $\pi_{41}$ direction (perpendicular to both $\pi_{13}$ and $\pi_{24}$), 
and also for our third tangent vector $\cv_3=(\bfzero, \bfzero, \pi_{24}, \bfzero)$ which moves $\pi_{13}$ in the $\pi_{24}$ direction (perpendicular to both $\pi_{13}$ and $\pi_{41}$).  With respect to the basis vectors $\pi_{13}, \pi_{24}, \pi_{41}$, we find
$$D\hat{f} = \begin{pmatrix} 1&0&0 \\ 0 & 1 & 0 \\ 0 & 0 & 1
\end{pmatrix}.
$$
Hence $D\hat{f}$ is onto and $\hat{f}$ is transverse to $(0,0,0)$,  and thus $\slq$ is a submanifold on the boundary face $(13)(24)$.
\end{proof}

\begin{remark} The reader might wonder why we did not simply use $f$ to define $\slq$ throughout the paper. It turns out that it is much, much harder to find appropriate tangent vectors to use in the proofs of Propositions~\ref{prop:slq is submanifold of r2}, \ref{prop:slq-boundary}, and \ref{prop:orientation}. We chose to simplify the computations at the cost of using two functions to define $\slq$.
\end{remark}

We conclude this section by noting that we have not proven the stronger result that  $\slq$ is a submanifold-with-boundary and corners of $C_4[R^k]$. While this is most likely true, we do not need this, or indeed any of the results in Appendices~\ref{sect:bdry-slq} or \ref{sect:slq2}, for our main theorem.

\section{A second approach to square-like quadrilaterals.}\label{sect:slq2}
The results in this appendix  fill in the details needed for the proof of Proposition~\ref{prop:slq-boundary}.
Recall that we define $\slq$ to  be the subset of {\em square-like quadrilaterals} of $\cfr$ such that  $r_{124} = r_{231} = r_{342} = 1$ and $r_{132} - r_{241} = 0$.  Equivalently, a point $\p=(\bfp_1,\bfp_2,\bfp_3,\bfp_4,\alpha(\bfp))\in\cfr$ is a square-like quadrilateral when it has equal sides and equal diagonals. That is
\begin{align} \norm{\bfp_1-\bfp_2}=\norm{\bfp_2-\bfp_3} &= \norm{\bfp_3-\bfp_4} = \norm{\bfp_4-\bfp_1}\label{eq:equal-sides}
\\
\norm{\bfp_1-\bfp_3} & = \norm{\bfp_2-\bfp_4}\label{eq:equal-diagonals}
\end{align}

Before we continue we recall a result about dot product. Assume that for four unit vectors $\vec{u}_1, \vec{u}_2, \vec{v}_1, \vec{v}_2$,  we have $\vec{u}_1 \cdot \vec{v}_1 = \vec{u}_2\cdot\vec{v}_2$. Then $\cos\theta_1=\cos\theta_2$, where $0\leq \theta_i\leq \pi$ are the angles between the vectors. Hence $\theta_1=\theta_2$. The converse also holds.

Again, assume that $\bfp_1\bfp_2\bfp_3\bfp_4$ is a square-like quadrilateral, so that $\triangle \bfp_1\bfp_4\bfp_2$ is isosceles. Then two internal angles ($\angle \bfp_1\bfp_4\bfp_2 = \angle \bfp_1\bfp_2\bfp_4$) of the triangle are equal, and their corresponding external angles are also equal. In addition, the line joining $\bfp_1$ to the midpoint of $\bfp_2\bfp_4$ bisects $\angle \bfp_2\bfp_1\bfp_4$ and is an altitude. Translating these ideas to taking the dot product of unit vectors gives the following equivalent equations:
\begin{align*}
\pi_{14}\cdot\pi_{24} = \pi_{12}\cdot\pi_{42} & = -\pi_{12}\cdot\pi_{24}\qquad\text{equal internal angles,}
\\(\pi_{14} + \pi_{12})\cdot\pi_{24} & =0
\\ -(\pi_{14} + \pi_{12})\cdot\pi_{24} & =0 \qquad\text{angle bisector is the altitude,}
\\ (\pi_{41} + \pi_{21})\cdot\pi_{24} & =0 \qquad\text{we use this equation below,}
\\ \pi_{41}\cdot\pi_{24} &= \pi_{21}\cdot\pi_{42} \qquad\text{equal external angles.} 
\end{align*}
Any of these five equations implies that $\triangle \bfp_1\bfp_2\bfp_4$ is isosceles and that $\norm{\bfp_1-\bfp_4}= \norm{\bfp_1-\bfp_2}$.

We now prove there is a second way of defining $\slq$ (first seen in Equation~\ref{eq:slq-v2}). Recall that we defined the map $f:\cfr\rightarrow \R^4$  by
\begin{equation*} f(\p) = \left( (\pi_{14}+\pi_{34})\cdot \pi_{13}, \ (\pi_{41}+\pi_{21})\cdot\pi_{24},  \ \pi_{13}\cdot\pi_{24},\  r^2_{132}-r^2_{241}\right).
\end{equation*}

\begin{proposition} \label{prop:slq-v2} The quadrilateral $\bfp_1\bfp_2\bfp_3\bfp_4$ is square-like (satisfies Equations~\ref{eq:equal-sides} and~\ref{eq:equal-diagonals}) if and only if $\p=(\bfp_1,\bfp_2,\bfp_3,\bfp_4,\alpha(\bfp))\in f^{-1}(0,0,0,0)$.
\end{proposition}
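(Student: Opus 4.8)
The plan is to prove the two implications separately, in both cases using the geometric dictionary between dot products of the $\pi_{ij}$ and internal/external angles of triangles built on the four points, together with the elementary fact recalled just above: for unit vectors $\vec u_1\cdot\vec v_1=\vec u_2\cdot\vec v_2$ iff the corresponding angles (in $[0,\pi]$) agree. Throughout I work with a configuration $\p\in\cfro$ in the interior, so that all four points are distinct and the $\pi_{ij}$ are the honest unit vectors of \defn{pijsijk}; the boundary case $(13)(24)$ is exactly the situation for which this $f$ was designed, but \prop{slq-v2} as stated is about $\cfr$, and the argument on the interior is the substantive one — on the boundary faces one passes to the limit using the continuity of $\pi_{ij}$ and $s_{ijl}$ from \thm{configgeom}(3), and the extension $\tilde r_{ijl}$ from the proof of \prop{slq-boundary}.

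First I would do the forward implication. Suppose $\bfp_1\bfp_2\bfp_3\bfp_4$ is square-like, i.e. \eqref{eq:equal-sides} and \eqref{eq:equal-diagonals} hold. The fourth coordinate of $f$ vanishes immediately: $r_{132}^2-r_{241}^2 = \norm{\bfp_1-\bfp_3}^2/\norm{\bfp_1-\bfp_2}^2 - \norm{\bfp_2-\bfp_4}^2/\norm{\bfp_2-\bfp_1}^2 = 0$ by \eqref{eq:equal-sides} and \eqref{eq:equal-diagonals}. For the first coordinate, consider the triangle $\triangle\bfp_1\bfp_3\bfp_4$: by \eqref{eq:equal-sides}, $\norm{\bfp_1-\bfp_4}=\norm{\bfp_3-\bfp_4}$, so this triangle is isosceles with apex $\bfp_4$, hence the altitude from $\bfp_4$ to side $\bfp_1\bfp_3$ bisects the apex angle and meets $\bfp_1\bfp_3$ at its midpoint. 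In terms of the unit vectors, this is precisely the statement that $\pi_{14}+\pi_{34}$ (the sum of the two unit vectors from $\bfp_4$ toward $\bfp_1$ and $\bfp_3$, which points along the angle bisector / altitude direction) is perpendicular to $\pi_{13}$, i.e. $(\pi_{14}+\pi_{34})\cdot\pi_{13}=0$. The symmetric argument with $\triangle\bfp_2\bfp_4\bfp_1$ — isosceles with apex $\bfp_1$ since $\norm{\bfp_1-\bfp_4}=\norm{\bfp_1-\bfp_2}$ — gives $(\pi_{41}+\pi_{21})\cdot\pi_{24}=0$, the second coordinate. Finally, for the third coordinate $\pi_{13}\cdot\pi_{24}=0$: here I would argue that the diagonals of a square-like quadrilateral are perpendicular. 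One clean way: by \eqref{eq:equal-sides} all four ``side'' triangles $\triangle\bfp_i\bfp_{i+1}\bfp_{i+2}$ (indices mod $4$) are congruent isosceles triangles with equal legs and, by \eqref{eq:equal-diagonals}, equal bases, so the four central angles at the intersection of the diagonals are equal; they sum to $2\pi$ along the diagonal lines, forcing each to be $\pi/2$. (Equivalently: $\bfp_1-\bfp_3$ and $\bfp_2-\bfp_4$ are the diagonals of a planar rhombus in the affine plane they span — using that opposite sides are parallel, which follows from the congruences — and rhombus diagonals are orthogonal.) Hence $f(\p)=(0,0,0,0)$.

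For the converse, suppose $f(\p)=(0,0,0,0)$. From $(\pi_{14}+\pi_{34})\cdot\pi_{13}=0$ and $\pi_{13}\cdot\pi_{24}=0$: the vanishing of the first says the bisector/median direction of $\triangle\bfp_1\bfp_3\bfp_4$ at $\bfp_4$ is perpendicular to $\bfp_1\bfp_3$, which (reversing the elementary argument) forces $\triangle\bfp_1\bfp_3\bfp_4$ to be isosceles with $\norm{\bfp_1-\bfp_4}=\norm{\bfp_3-\bfp_4}$; symmetrically the second nonzero coordinate gives $\norm{\bfp_1-\bfp_4}=\norm{\bfp_1-\bfp_2}$, and then $\norm{\bfp_2-\bfp_3}=\norm{\bfp_1-\bfp_4}$ follows too (e.g.\ repeating with the triangle at $\bfp_1$, or by symmetry of the configuration of equalities), so all four side-lengths are equal, i.e.\ \eqref{eq:equal-sides} holds. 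Given \eqref{eq:equal-sides}, the fourth coordinate $r_{132}^2 - r_{241}^2 = (\norm{\bfp_1-\bfp_3}^2 - \norm{\bfp_2-\bfp_4}^2)/\norm{\bfp_1-\bfp_2}^2 = 0$ now yields exactly \eqref{eq:equal-diagonals}. (The third coordinate $\pi_{13}\cdot\pi_{24}=0$ is then automatically consistent by the forward direction and is the piece that pins down the $(13)(24)$ boundary behavior; on the interior it is implied once \eqref{eq:equal-sides}, \eqref{eq:equal-diagonals} hold, so no contradiction arises.) Thus $\bfp_1\bfp_2\bfp_3\bfp_4$ is square-like.

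\emph{Expected main obstacle.} The routine parts are the two vanishing-of-dot-product/isosceles translations; the only genuinely delicate point is correctly matching the \emph{oriented} unit vectors $\pi_{ij}$ versus $\pi_{ji}$ (they differ by a sign) so that ``$\pi_{14}+\pi_{34}$ points along the altitude'' is literally the bisector of the \emph{interior} angle at $\bfp_4$ and not its supplement — this is why the paper uses $\pi_{14}+\pi_{34}$ in the first slot but $\pi_{41}+\pi_{21}$ (outward-reversed) in the second, and a careful bookkeeping of these conventions, matching the list of five equivalent equations displayed before the proposition, is the step most likely to hide a sign error. A secondary subtlety is making the ``diagonals are perpendicular'' claim airtight without assuming planarity: it is cleanest to deduce from \eqref{eq:equal-sides}+\eqref{eq:equal-diagonals} that opposite sides are equal and the configuration is in fact planar (a rhombus), and then invoke the classical rhombus fact — but one should check that \prop{slq-v2} only needs the biconditional with \eqref{eq:equal-sides}, \eqref{eq:equal-diagonals}, so the $\pi_{13}\cdot\pi_{24}=0$ coordinate need only be shown \emph{necessary}, never used as an independent hypothesis in the converse beyond the first two angle equations.
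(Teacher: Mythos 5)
Your proposal has two genuine gaps, both centered on the third coordinate $\pi_{13}\cdot\pi_{24}=0$ and both stemming from an incorrect appeal to planarity.

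In the forward direction, your argument that the diagonals are perpendicular ("central angles at the intersection of the diagonals sum to $2\pi$," and the alternative "planar rhombus in the affine plane they span") presupposes that $\bfp_1\bfp_3$ and $\bfp_2\bfp_4$ intersect, i.e.\ that the four points are coplanar. This is false for $k>2$: equal sides $\ell$ and equal diagonals $m$ with $\ell \le m < \ell\sqrt2$ give a one-parameter family of genuinely non-planar configurations (at one extreme, $m=\ell$, the four vertices of a regular tetrahedron), and the paper treats the non-planar case throughout — it is precisely Case~2 in the proof of \prop{slq is submanifold of r2}. Your suggested "fix," deducing planarity from \eqref{eq:equal-sides} and \eqref{eq:equal-diagonals}, therefore cannot work. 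The paper avoids this by a midpoint argument: take $\bfm_2$ the midpoint of $\bfp_2\bfp_4$; since $\triangle\bfp_1\bfp_2\bfp_4$ and $\triangle\bfp_3\bfp_2\bfp_4$ are isosceles, both $\bfp_1\bfm_2$ and $\bfp_3\bfm_2$ are altitudes (perpendicular to $\bfp_2\bfp_4$), so the whole plane through $\bfp_1,\bfp_3,\bfm_2$ is perpendicular to $\bfp_2\bfp_4$; in particular $\pi_{13}\perp\pi_{24}$, with no planarity assumption on the quadrilateral.

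In the converse, the gap is more serious: you conclude all four side-lengths are equal from the first two dot-product conditions alone, claiming the fourth equality $\norm{\bfp_2-\bfp_3}=\norm{\bfp_1-\bfp_4}$ "follows by symmetry," and you explicitly state that $\pi_{13}\cdot\pi_{24}=0$ "need only be shown necessary, never used as an independent hypothesis in the converse." That is false. The conditions $(\pi_{14}+\pi_{34})\cdot\pi_{13}=0$, $(\pi_{41}+\pi_{21})\cdot\pi_{24}=0$, and $r_{132}=r_{241}$ only give three equal sides plus equal diagonals; the fourth side is unconstrained. A concrete planar counterexample: $\bfp_1=(0,0)$, $\bfp_4=(1,0)$, $\bfp_2=(\cos\theta,\sin\theta)$, $\bfp_3=(1-\cos\theta,\sin\theta)$ for any $\theta$ with $\cos\theta\notin\{0,1\}$. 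Then $\norm{\bfp_1\bfp_4}=\norm{\bfp_1\bfp_2}=\norm{\bfp_3\bfp_4}=1$, the diagonals are equal, the first two dot-product conditions hold, but $\norm{\bfp_2\bfp_3}=|1-2\cos\theta|\ne 1$ and one can check $\pi_{13}\cdot\pi_{24}\ne 0$. So the third coordinate is exactly what rules out this family, and the paper uses it for precisely this purpose: with $\pi_{13}\perp\pi_{24}$, both $\bfp_1\bfm_2$ and $\bfp_1\bfp_3$ are perpendicular to $\bfp_2\bfp_4$, hence so is $\bfp_3\bfm_2$, and then $\triangle\bfp_4\bfm_2\bfp_3\cong\triangle\bfp_2\bfm_2\bfp_3$ by SAS gives the missing equality $\norm{\bfp_3-\bfp_4}=\norm{\bfp_2-\bfp_3}$. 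The isosceles-triangle translations and the vanishing of the fourth coordinate in your write-up are fine and match the paper; what is missing is the midpoint argument, which is the actual content of the proposition.
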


\begin{proof}
First assume that $\bfp_1\bfp_2\bfp_3\bfp_4$ has equal sides and equal diagonals. Then triangles $\triangle \bfp_4\bfp_1\bfp_3$ and $\triangle \bfp_1\bfp_2\bfp_4$ are both isosceles triangles. The discussion above shows that this implies $(\pi_{14}+\pi_{34})\cdot \pi_{13}=0$, and $(\pi_{41}+\pi_{21})\cdot\pi_{24}=0$. Since the diagonals are equal in length,  $r^2_{132}-r^2_{241}=0$ is automatically true.   

Note that showing $\pi_{13}\cdot\pi_{24}=0$, is the same as showing the diagonals $\bfp_1\bfp_3$ and $\bfp_2\bfp_4$ are perpendicular to one another. 
When $k=2$ (or the quadrilateral is planar), then the square-like quadrilateral is in fact a square, and the diagonals of squares are perpendicular. When $k>2$, we need a different argument. Set $\bfm_1$ to be the midpoint of $\bfp_1\bfp_3$ and $\bfm_2$ to be the midpoint of $\bfp_2\bfp_4$. Then $\triangle \bfp_1\bfp_2\bfp_4$ has altitude $\bfp_1\bfm_2$, and $\triangle \bfp_3\bfp_2\bfp_4$ has altitude $\bfp_3\bfm_2$. Thus the plane through the points $\bfp_1\bfp_3\bfm_2$ is perpendicular to $\bfp_2\bfp_4$, and so $\pi_{13}\cdot\pi_{24}=0$.

Now assume that $\p=(\bfp_1,\bfp_2,\bfp_3,\bfp_4,\alpha(\bfp))\in f^{-1}(0,0,0,0)$. Since $r^2_{132}-r^2_{241}=0$, then the diagonals are equal in length. Now $(\pi_{14}+\pi_{34})\cdot \pi_{13}=0$ and  $(\pi_{41}+\pi_{21})\cdot\pi_{24}=0$, shows that  $\triangle \bfp_4\bfp_1\bfp_3$ and $\triangle \bfp_1\bfp_2\bfp_4$ are both isosceles triangles. Since these triangles share side $\bfp_1\bfp_4$, we get $\norm{\bfp_3-\bfp_4} = \norm{\bfp_4-\bfp_1} = \norm{\bfp_1-\bfp_2}$.  Using the same notation as above, we let $\bfm_2$ be a point on $\bfp_2\bfp_4$ such that $\bfp_1\bfm_2$ is an altitude of $\triangle \bfp_1\bfp_2\bfp_4$. Thus $\bfp_1\bfm_2$ is perpendicular to $\bfp_2\bfp_4$. Since $\pi_{13}\cdot\pi_{24}=0$, then $\bfp_1\bfp_3$ is perpendicular to $\bfp_2\bfp_4$. This means that the plane though $\bfp_1\bfp_3\bfm_2$ is perpendicular to $\bfp_2\bfp_4$, and so
$\bfp_3\bfm_2$ is also perpendicular to $\bfp_2\bfp_4$. Thus $\triangle \bfp_4\bfm_2\bfp_3\cong\triangle \bfp_2\bfm_2\bfp_3$ (SAS) and hence $\norm{\bfp_4-\bfp_3}=\norm{\bfp_2-\bfp_3}$. Altogether we see that all the sides have the same length, and so the quadrilateral is indeed square-like.
\end{proof}

%

\section{The ellipse}\label{sect:ellipse}
In order to complete the proof of our main theorem, we need to show that in any ellipse, there is a single inscribed square.

\begin{lemma}\label{lem:one-square-ellipse}
In $\R^2$, if $\gamma$ is a planar ellipse $ \nicefrac{x^2}{a^2} + \nicefrac{y^2}{b^2} = 1$ with $a^2 \neq b^2$, then $\qcfog \cap \qslq\neq\emptyset$ and the intersection represents a single square.
\end{lemma}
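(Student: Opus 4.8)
The plan is to reduce the lemma to a short piece of plane geometry together with an elementary computation, and then to keep track of the labelling and of the $\Z/4\Z$-quotient. First I would show that \emph{every} square inscribed in $\gamma$ is centred at the centre of the ellipse. Let $S$ be such a square, with centre $O$, and let $\rho\co\bfx\mapsto 2O-\bfx$ be the point reflection in $O$. Since $S$ is invariant under $\rho$, the four distinct vertices of $S$, which lie on $\gamma$, are carried by $\rho$ to the same four points, which therefore lie in $\gamma\cap\rho(\gamma)$. Because $\gamma$ is centrally symmetric, $\rho(\gamma)$ is a translate of $\gamma$ with the same axis directions, so subtracting the two defining equations produces a linear equation, nontrivial unless the two ellipses coincide, and a nontrivial linear equation meets $\gamma$ in at most two points. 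Four common points therefore force $\rho(\gamma)=\gamma$, i.e.\ $O$ is the centre of $\gamma$. I expect this centring step to be the crux; what follows is bookkeeping.

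Next I would classify the squares centred at the origin. Such a square has vertices $P,Q,-P,-Q$ for some $P,Q\in\gamma$ with $|P|=|Q|$ and $P\cdot Q=0$, and conversely any such pair produces one. Writing $P=(a\cos\alpha,b\sin\alpha)$ and $Q=(a\cos\beta,b\sin\beta)$, the relation $|P|^2=|Q|^2$ becomes $(a^2-b^2)(\cos^2\alpha-\cos^2\beta)=0$, so $\cos^2\alpha=\cos^2\beta$ since $a^2\neq b^2$; thus $Q=(\pm a\cos\alpha,\pm b\sin\alpha)$. Discarding the degenerate possibilities $Q=\pm P$, orthogonality $P\cdot Q=0$ reduces to $a^2\cos^2\alpha=b^2\sin^2\alpha$, i.e.\ $\tan^2\alpha=a^2/b^2$, and in every surviving case this gives $\{P,Q,-P,-Q\}=\{(\pm c,\pm c)\}$ with $c=ab/\sqrt{a^2+b^2}$, the axis-parallel square. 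Together with the centring step, this shows $\gamma$ inscribes exactly one square as an unordered $4$-point set, so in particular the intersection is nonempty.

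Finally I would pass to the quotient configuration space. Since $\bdry\slq$ and $\bdry C_4^0[\gamma(S^1)]$ are boundary-disjoint (\lem{boundry-disjoint}) and $\bdry\slq\subset\bdry C_4[\R^2]$, the intersection $\slq\cap C_4^0[\gamma(S^1)]$ lies in the open stratum $C_4(\R^2)$, so each of its points is an ordered $4$-tuple of distinct points of $\gamma$ forming a square (recall that in $\R^2$ the interior of $\slq$ consists of squares). Its underlying point set must be the square found above, and the conditions $r_{124}=r_{231}=r_{342}=1$ force each of $\bfp_1\bfp_2$, $\bfp_2\bfp_3$, $\bfp_3\bfp_4$, $\bfp_4\bfp_1$ to be a side of that square, so the tuple lists the vertices in cyclic order compatible with the orientation of $\gamma$. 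There are exactly four such listings, forming a single orbit of $\mu$; hence $\hat{C}^0_4[\gamma(S^1)]\cap\qslq$ is a single point, representing the unique inscribed square.
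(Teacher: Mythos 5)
Your proof is correct, and the key step—showing that every inscribed square must be centred at the centre of the ellipse—is argued along a genuinely different route from the paper's. The paper appeals to the projective/affine lemma that parallel chords of an ellipse have collinear midpoints lying on a diameter (\lem{parallelchords}), and then uses $OM\perp AB$ with $M$ a side-midpoint to get $OA=OB$. You instead use the point symmetry of the square: the four vertices lie in $\gamma\cap\rho(\gamma)$, where $\rho$ is reflection in the square's centre, and since $\gamma$ and $\rho(\gamma)$ are translates of the same conic, subtracting the two defining equations gives a linear condition that cuts $\gamma$ in at most two points unless the ellipses coincide. This is a tidy B\'ezout-style argument (a nontrivial line meets a conic in at most two points) and arguably more self-contained than quoting the parallel-chord lemma; the paper's route is shorter once the chord lemma is granted. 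Your centring step also has the side virtue of making explicit that the linear system degenerates exactly when $O$ is the ellipse centre, which is the content one needs. The rest of your argument—the classification via $|P|=|Q|$, $P\cdot Q=0$ and $a^2\neq b^2$ yielding $\tan^2\alpha=a^2/b^2$, and then the bookkeeping of the four cyclic labellings forming a single $\mu$-orbit so that the intersection in the quotient is one point—matches the paper's computation in substance; you just organize the classification via the diagonal pair $(P,Q)$ rather than a side pair $(A,B)$. Your third paragraph supplies a small but worthwhile detail the paper leaves implicit: using \lem{boundry-disjoint} to place the intersection in the open stratum, where $\slq$ really consists of planar squares.
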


\begin{proof} 
We will need a lemma:

\begin{lemma}\label{lem:parallelchords}
Parallel chords meeting an ellipse have midpoints on a line through the center of the ellipse (where the major and minor axes meet).
\end{lemma}
\begin{proof} 
This is true for a circle and is preserved under affine mappings.
\end{proof}

We prove that the intersection $\qcfog \cap \qslq$ is a single square. First, if we intersect the ellipse with the lines $y = \pm x$, by symmetry the intersection points form a square.  We prove that this is the only square inscribed in the ellipse.
If we parametrize the ellipse by $(x(\theta),y(\theta)) = (a \cos \theta, b \sin\theta)$, we can work out that $\cos^2 \theta = b^2/(a^2 + b^2)$ and $\sin^2\theta = a^2/(a^2 + b^2)$. 

Suppose $ABCD$ is any square inscribed in the ellipse. Let $M$ denote the midpoint of $AB$
and~$N$ denote the midpoint of $CD$. Then, by \lem{parallelchords}, $MN$ passes
through the center $O$ of the ellipse. Similarly, if $K$ denotes the midpoint of $AD$ and
$L$ the midpoint of $BC$, then $KL$ passes through~$O$. Thus $O$ is also the center of the square.
Using our parametrization of the ellipse, we can write
\begin{equation*}
A = (a\cos\alpha, b\sin\alpha), \quad B = (a\cos\beta, b\sin \beta).
\end{equation*}
The segment $OM$ is perpendicular to $AB$ and so $\triangle OAM$ and $\triangle OBM$
are congruent and $OA\cong OB$. Thus
\begin{equation*}
a^2 \cos^2\alpha + b^2 \sin^2 \alpha = a^2\cos^2 \beta + b^2\sin^2\beta.
\end{equation*}
This implies $(a^2 - b^2) \cos^2 \alpha = (a^2 - b^2)\cos^2 \beta$ and so, since $a\neq b$, we know $\cos \alpha
= \pm \cos\beta$. Similarly, $\sin \alpha = \pm \sin \beta$. This means that $B$ is the image of $A$ under a symmetry of the ellipse, and since the same argument works \textit{mutatis mutandis} for $C$ and $D$, the square is symmetric under the flip symmetries of the ellipse. There are two types of inscribed quadrilaterals with this symmetry: inscribed rectangles in the form $(\pm x, \pm y)$, and the ``exceptional'' rhombus $\{(\pm a,0),(0,\pm b)\}$. Since $a \neq b$, the only square is our previous set of 4 points $(\pm ab/\sqrt{a^2 + b^2}, \pm ab/\sqrt{a^2+b^2})$.
\end{proof}


\end{document}